\documentclass[letterpaper]{article} 
\usepackage[preprint]{aaai2027}  
\usepackage[hyphens]{url}  
\usepackage{graphicx} 
\usepackage{natbib}  
\usepackage{caption} 
\usepackage{amsmath}
\usepackage{amssymb}
\usepackage{amsfonts}
\usepackage{mathtools}
\usepackage{amsthm}
\usepackage{bm}

\usepackage{booktabs}
\usepackage{array}
\usepackage{tabularx}
\usepackage{multirow}
\usepackage{makecell}
\usepackage{adjustbox}
\usepackage{siunitx}

\usepackage{algorithm}
\usepackage{algorithmic}

\usepackage{tikz}
\usetikzlibrary{
    arrows.meta,
    positioning,
    calc,
    fit,
    backgrounds,
    shapes.geometric,
    shadows
}

\usepackage{textcomp}
\usepackage{xcolor}
\usepackage{enumitem}
\usepackage{microtype}
\usepackage[capitalize,noabbrev]{cleveref}

\newtheorem{theorem}{Theorem}[section]
\newtheorem{proposition}[theorem]{Proposition}
\newtheorem{lemma}[theorem]{Lemma}
\newtheorem{corollary}[theorem]{Corollary}

\theoremstyle{definition}

\newtheorem{assumption}[theorem]{Assumption}

\DeclareMathOperator{\Proj}{Proj}
\DeclareMathOperator{\Fix}{Fix}
\DeclareMathOperator{\dist}{dist}

\DeclareMathOperator*{\argmin}{arg\,min}

\newcolumntype{C}[1]{>{\centering\arraybackslash}m{#1}}

\title{Residual-Controlled Douglas--Rachford Splitting for Differentiable Solver Layers}
\author{Kang Liu, ~Jianchen Hu}
\affiliations{School of Future Technology, Xi'an Jiaotong University\\
School of Automation Science and Engineering, Xi'an Jiaotong University}

\begin{document}

\maketitle

\begin{abstract}
Differentiable solver layers embed constrained optimization into end-to-end learning systems, but fixed-depth unrolling must trade off solution quality, feasibility, and computational budget. We propose Residual-Controlled Douglas--Rachford Splitting (RCDRS), a differentiable solver layer for conic linear programs. RCDRS treats an unrolled solver as a feedback-controlled dynamical system, where a causal controller adapts the relaxation and objective-drive parameters while preserving the projection-splitting structure of Douglas--Rachford splitting. Theoretically, we show that each fixed admissible block remains an averaged relaxed DRS operator and admits finite-step fixed-point residual bounds. We further analyze safeguarded time-varying rollouts as summable perturbations of a limiting averaged operator, and recover terminal primal-dual diagnostics from the final splitting state. Experiments on mixed-cone benchmarks and engineering applications show that RCDRS improves solution quality, feasibility and downstream decision performance. The code is available at \url{https://anonymous.4open.science/r/RC-DRS-180C/}.
\end{abstract}

\section{Introduction}
\label{sec:introduction}

Differentiable optimization layers are increasingly used to embed constrained
decision making into end-to-end learning systems. Such layers appear in
decision-focused learning (DFL)~\cite{mandi2024dflsurvey}, constrained
prediction, and model predictive control (MPC), where model outputs are
determined through the solution of an optimization problem
\cite{amos2017optnet,agrawal2019cvxpylayers}. In these settings, the solver is
repeatedly invoked over a distribution of related instances and is often
included in the training loop. The main issue is the performance with limited budget, i.e., the optimization layer must produce useful
decisions within a fixed small number of iterations while remaining differentiable.

We focus on conic linear programming (CLP)
\cite{ye1997interior,shapiro2001duality}, a unified framework covering linear
programming, second-order cone programming, semidefinite programming, and many
convex decision layers. CLPs can be solved by separating their affine and
conic constraints, leading to projection-based iterations composed of affine
projection, cone projection, relaxation, and dual correction. The alternating
direction method of multipliers (ADMM) and its Douglas--Rachford splitting
(DRS) interpretation provide a standard foundation for these schemes
\cite{boyd2011admm,eckstein1992drs}. Since the associated operations are almost everywhere differentiable, a fixed number of
splitting steps can be unrolled as a differentiable solver layer
\cite{rockafellar1998variational,agrawal2019diffcp}.

Fixed-depth unrolling creates a trade-off among solution quality, feasibility,
and cost. Increasing the number of iterations can improve
solution accuracy, but it increases the computation and activation memory
required in the forward and backward passes. Shallow unrolling is more
efficient but may return decisions with poor objective value, weak
feasibility, or even unstable terminal behavior. Moreover, effective
finite-budget updates depend on both the problem instance and the observed
trajectory. Early iterations may favor objective progress, whereas later
iterations may require feasibility recovery. Fixed
parameters cannot adapt across instances, and a learned layer-wise schedule
remains open-loop because it cannot react to trajectory-level residuals and
progress.

To address this issue, we propose Residual-Controlled Douglas--Rachford
Splitting (\textsc{RCDRS}), a differentiable projection-splitting layer for
CLPs. The key idea is to view an unrolled DRS solver as a
feedback-controlled dynamical system. At iteration \(k\), a causal controller
observes normalized residual, action-history, objective-progress, and
iteration-budget features from the current trajectory. It then adapts two
transition variables: 1) the relaxation parameter \(\alpha_k\), which controls
the splitting update; 2) the objective-drive parameter \(\beta_k\), which
controls the objective shift before the affine projection. The controller
therefore adjusts how the finite iteration budget is allocated while
preserving the affine projection, cone projection, relaxation, and dual-update
structure of DRS. This structure also provides an interpretable terminal
state from which primal, dual, and slack quantities can be recovered and
evaluated using KKT-style diagnostics.

We provide an operator-theoretic analysis of the proposed layer. For any
fixed admissible pair \((\alpha,\beta)\), the corresponding transition is an
averaged relaxed DRS operator for a positively scaled conic program with the
same primal solution set as the original problem. This interpretation yields
a finite-step fixed-point residual bound. We further analyze safeguarded
time-varying rollouts as summable perturbations of a limiting averaged
operator and derive a finite-step residual bound for the controlled
trajectory. In addition, we develop a terminal-oriented self-supervised
objective that learns the controller from solver residuals and objective
progress without requiring optimal-solution labels.

The main contributions are summarized as follows:
\begin{itemize}
    \item We propose \textsc{RCDRS}, a residual-feedback-controlled DRS
    layer that adapts relaxation and objective-drive parameters from causal
    trajectory features under a fixed iteration budget.

    \item We establish that fixed admissible transitions remain
    solution-preserving averaged DRS operators and that safeguarded
    time-varying rollouts can be analyzed as summable perturbations of a
    limiting averaged operator.

    \item We develop a terminal-oriented self-supervised training framework
    and evaluate the resulting solver layer on fixed-budget conic
    optimization, decision-focused learning, and forecast-aware energy
    dispatch.
\end{itemize}

\section{Related Work}
\label{sec:related_work}

\subsection{Differentiable solver layers}

Embedding constrained optimization into neural networks has followed two main routes: implicit differentiation and algorithm unrolling. Both produce differentiable layers, but they differ in how gradients are obtained and in how the solving iteration is exposed to learning.

Implicit differentiation layers solve the optimization problem approximately in the forward pass and then differentiate it through the optimality conditions. OptNet pioneered this approach by backpropagating through the KKT conditions of quadratic programs~\cite{amos2017optnet}. CVXPYLayers extended the paradigm to disciplined convex programs~\cite{agrawal2019cvxpylayers}, and diffcp enabled differentiation through cone programs via the residual map of a homogeneous self-dual embedding~\cite{agrawal2019diffcp}. These methods provide principled gradients of the solution map, but they usually require an additional linear-system or KKT-type solution in the backward pass, which can be expensive and numerically sensitive in repetitive training loops~\cite{katyal2024diffoptlayers,mandi2025surrogate,dysnet2025solverfree}.

Algorithm unrolling offers a complementary route~\cite{monga2021algorithm}. Instead of differentiating through a converged solution, it unfolds a fixed number of iterative solver steps into a computation graph and backpropagates through the stored trajectory. This idea has been widely used in sparse coding and inverse problems, from LISTA~\cite{gregor2010lista,chen2018lista} to ADMM-based unrolled networks~\cite{yang2016deepadmmnet,xie2019dladmm}. These works show that optimization algorithms can serve as trainable neural layers when their computational structure is preserved. \textsc{RCDRS} also follows the unrolling paradigm, but addresses a different question: \textit{how to improve the decision quality of a fixed-depth solver layer by adapting its splitting parameters from trajectory feedback?}

\subsection{Trajectory control of iterative solvers}

The performance of splitting methods is highly sensitive to relaxation factors, penalties, step sizes, and related algorithmic parameters. Classical adaptive schemes tune these quantities using hand-designed rules, including residual balancing~\cite{wohlberg2017residual}, spectral penalty estimation~\cite{xu2017spectraladmm}, and joint penalty-relaxation updates~\cite{xu2017aradmm}. Recent variants further consider multi-constraint penalties and constraint-wise weighting~\cite{lozenski2025multiparameter,verheijen2025superadmm}. These methods demonstrate the importance of parameter adaptation, but their update mechanisms are usually fixed by manually designed heuristics.

Learning-based solver control provides a data-driven alternative. RLQP formulates solver-parameter tuning as a reinforcement-learning problem~\cite{ichnowski2021rlqp}, while context-aware adaptive methods learn parameter updates from trajectory features~\cite{jung2022caadmm}. Other recent work studies learned over-relaxation with convergence certificates~\cite{lin2026overrelaxation} or neural approximations within iterative optimization subproblems~\cite{chen2025inexactadmm}. In contrast to the above approaches, \textsc{RCDRS} learns a closed-loop residual controller while keeping the original DRS structure. It therefore uses feedback to improve finite-budget behavior without turning the solver layer into a black-box neural predictor.

\section{Preliminaries and Problem Statement}
\label{sec:prelim}

\subsection{CLPs and DRS solver layer}

We consider a family of CLPs
\begin{equation}
\label{eq:clp}
    \min_{x} c^\top x
    ~~
    \mathrm{s.t.}~~
    Ax=b,~~ x\in\mathcal K ,
\end{equation}
where $x\in\mathbb R^n$, \(A\in\mathbb R^{m\times n}\), \(b\in\mathbb R^m\),
\(c\in\mathbb R^n\), and \(\mathcal K\subseteq\mathbb R^n\) is a
Cartesian product of projection-computable closed convex cones. A problem
instance is denoted by \(d:=(A,b,c,\mathcal K)\), and the goal is to
construct a fixed-depth differentiable solver layer that maps \(d\) to an
approximate primal decision.

Let \(\mathcal A:=\{x\in\mathbb R^n:Ax=b\}\) denote the affine feasible
set. Problem~\eqref{eq:clp} can be written as
\(\min_x c^\top x+\mathbb I_{\mathcal A}(x)+\mathbb I_{\mathcal K}(x)\),
where \(\mathbb I_{\mathcal C}\) is the indicator function of
\(\mathcal C\). Its affine projection is
\(\Proj_{\mathcal A}(v)=v-A^\top(AA^\top)^\dagger(Av-b)\), while
\(\Proj_{\mathcal K}\) is evaluated blockwise according to the product
structure of \(\mathcal K\).

We use the scaled-state form of relaxed DRS
\cite{eckstein1992drs,boyd2011admm}. The state is
\(y^k=(x^k,z^k,u^k)\), where \(x^k\) and \(z^k\) are the affine-side and
cone-side variables, and \(u^k\) is the scaled dual variable
associated with the consensus constraint \(x=z\). Given a relaxation
parameter \(\alpha\in(0,2)\) and an objective-shift scale \(\gamma>0\), one
relaxed DRS step is
\begin{equation}
\label{eq:standard}
\begin{aligned}
    x^{k+1}
    &=
    \Proj_{\mathcal A}(z^k-u^k-\gamma c),\\
    \bar x^{k+1}
    &=
    \alpha x^{k+1}+(1-\alpha)z^k,\\
    z^{k+1}
    &=
    \Proj_{\mathcal K}(\bar x^{k+1}+u^k),\\
    u^{k+1}
    &=
    u^k+\bar x^{k+1}-z^{k+1}.
\end{aligned}
\end{equation}
Here \(\gamma\) controls the objective shift, \(\alpha\) controls the
relaxed point passed to the cone projection, and
\(z^{k+1}\in\mathcal K\) holds by construction. The terminal variable
\(z^K\) is therefore used as the primal output, while \(Az^K-b\) and
\(x^K-z^K\) measure affine infeasibility and splitting inconsistency.

Let \(F_{\alpha,\gamma}\) denote the transition in
\eqref{eq:standard}. Running \(K\) steps generates
\(y^{k+1}=F_{\alpha,\gamma}(y^k;d)\), \(k=0,\ldots,K-1\), and the
corresponding solver layer returns \(\mathcal S_K(d)=z^K\). Its terminal
quality depends on the parameters used along the finite trajectory.

\subsection{Problem statement}

A fixed-depth splitting solver produces an observable trajectory
\(y^0,y^1,\ldots,y^K\) containing information about objective progress,
affine feasibility, splitting consistency, and terminal stabilization. We
seek a differentiable solver layer of the form
\(y^{k+1}=F_{\tau_k}(y^k;d)\), with
\(\tau_k=\pi_\theta(\mathcal H_k)\), where
\(\mathcal H_k\) denotes the information available up to iteration \(k\),
\(\tau_k\) is the next splitting action, and \(\pi_\theta\) is a causal
feedback policy. The objective is to improve the terminal decision \(z^K\)
under a fixed iteration budget while preserving the affine projection, cone
projection, relaxation, and dual-update structure of DRS.

\section{Methodology}
\label{sec:method}

\subsection{Residual-controlled DRS layer}
\label{subsec:method_overview}

\begin{figure}[!t]
    \centering
    \IfFileExists{fig1.png}{%
        \includegraphics[width=\columnwidth]{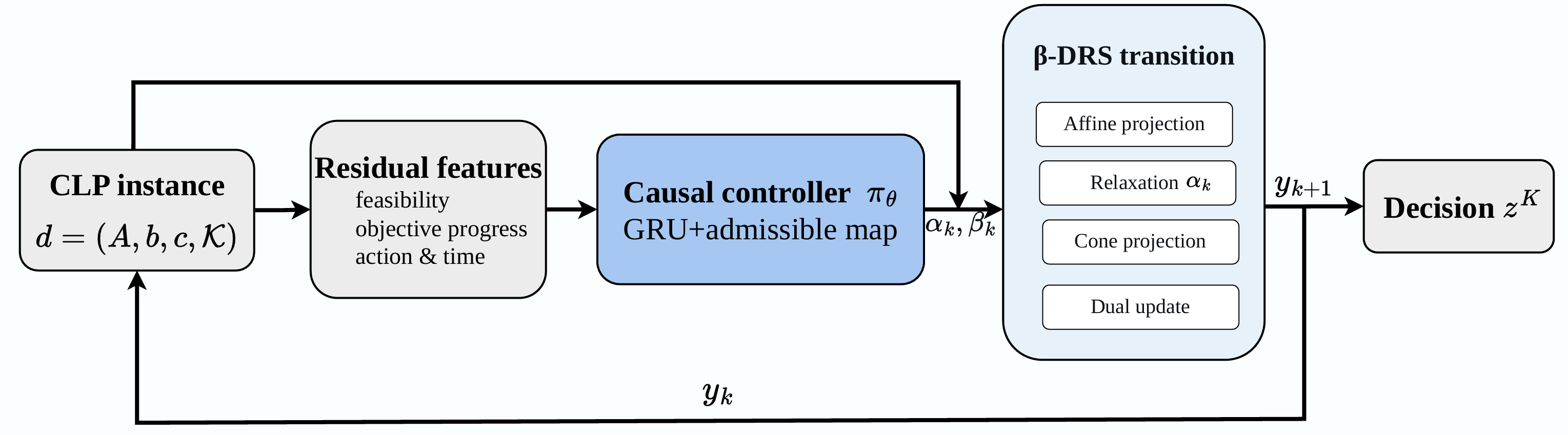}%
    }{%
        \fbox{\parbox[c][5.0cm][c]{0.78\columnwidth}{\centering
        Placeholder for the RCDRS overview figure\\[2mm]
        Replace this box by placing \texttt{fig1.png} in the manuscript folder.}}%
    }
    \caption{Overview of the proposed \textsc{RCDRS} layer. The layer
    couples a \(\beta\)-parameterized DRS transition with a causal residual
    feedback controller. The transition preserves the projection-splitting
    operations, while the controller adapts the finite-depth trajectory
    through residual, action-history, and time information.}
    \label{fig:rcdrs_overview}
\end{figure}

Fig.~\ref{fig:rcdrs_overview} illustrates the proposed \textsc{RCDRS}
layer. The key idea is to treat a fixed-depth DRS layer as a finite-horizon
controlled dynamical system. Conceptually, \textsc{RCDRS} designs a feedback
controller for the trajectory-dependent splitting parameters of DRS, with
the goal of improving terminal solution quality under a fixed iteration
budget.

The closed-loop rollout consists of two coupled maps:
\begin{subequations}
\label{eq:rcdrs_two_maps}
\begin{align}
    y^{k+1}
    &=
    F_{\vartheta_k}(y^k;d),
    ~~
    \vartheta_k=(\alpha_k,\beta_k),\\
    \omega_k
    &:=
    (\rho_k,\alpha_k,\beta_k)
    =
    \pi_\theta(\phi^{0:k}).
\end{align}
\end{subequations}
In~\eqref{eq:rcdrs_two_maps}, \(F_{\vartheta_k}\) is the numerical DRS
transition controlled by \((\alpha_k,\beta_k)\), while \(\pi_\theta\) is
a causal feedback policy that maps the feature history
\(\phi^{0:k}:=(\phi^0,\ldots,\phi^k)\) to the control vector
\(\omega_k\). The coordinate \(\rho_k>0\) is the auxiliary
controller-side scale memory for action-history encoding and regularization,
whereas \((\alpha_k,\beta_k)\) are passed to the numerical transition.

\subsection{\(\beta\)-parameterized DRS transition}
\label{subsec:beta_drs_block}

We first define the controlled transition \(F_{\vartheta_k}\). In standard
relaxed DRS~\eqref{eq:standard}, the affine projection uses a fixed
objective shift \(-\gamma c\). A fixed-depth layer benefits from a
trajectory-dependent objective drive because early iterations may favor
objective progress, whereas later iterations may require feasibility
recovery and terminal stabilization. We therefore use the normalized
objective direction \(\bar c:=c/(\|c\|_2+\varepsilon_c)\), where
\(\varepsilon_c>0\), and let \(\beta_k>0\) control the strength of the
objective shift. Given \(\vartheta_k=(\alpha_k,\beta_k)\), with
\(\alpha_k\in(0,2)\) and \(\beta_k>0\), the \(k\)-th
\(\beta\)-parameterized DRS transition is
\begin{equation}
\label{eq:rc_drs_block}
\begin{aligned}
    w^k
    &=
    z^k-u^k-\beta_k\bar c,\\
    x^{k+1}
    &=
    \Proj_{\mathcal A}(w^k),\\
    \bar x^{k+1}
    &=
    \alpha_kx^{k+1}+(1-\alpha_k)z^k,\\
    z^{k+1}
    &=
    \Proj_{\mathcal K}(\bar x^{k+1}+u^k),\\
    u^{k+1}
    &=
    u^k+\bar x^{k+1}-z^{k+1}.
\end{aligned}
\end{equation}
Compared with the fixed-core DRS layer, the fixed shift \(\gamma c\) is
replaced by the adaptive normalized shift \(\beta_k\bar c\). Thus
\(\beta_k\) controls objective motion before the affine projection, while
\(\alpha_k\) controls relaxation before the cone projection. The affine
projection, cone projection, and dual-correction operations remain
unchanged.

For a \(K\)-step rollout, the primal decision is \(z^K\). Since the cone
projection is applied at every iteration, \(z^K\in\mathcal K\) up to
numerical precision. The retained primal--dual structure also provides a
terminal diagnostic readout. With
\(\kappa_{K-1}:=\beta_{K-1}/(\|c\|_2+\varepsilon_c)\), we recover the
terminal slack and equality multiplier as
\begin{equation}
\label{eq:dualdiag}
    s^K=-u^K/\kappa_{K-1},
    ~~
    \lambda^K=(AA^\top)^\dagger A(c-s^K).
\end{equation}
The layer can therefore return the primal decision \(z^K\) together with the
diagnostic triplet
\(\operatorname{RCDRS}_\theta^K(d)=(z^K,\lambda^K,s^K)\).

\subsection{Causal residual feedback controller}
\label{subsec:causal_controller}

We now describe the controller used to select trajectory-dependent splitting
parameters. At iteration \(k\), the controller only uses information
available from the current and previous solver states and outputs
\(\omega_k=(\rho_k,\alpha_k,\beta_k)\). The transition in
\eqref{eq:rc_drs_block} uses \((\alpha_k,\beta_k)\), while \(\rho_k\) is
maintained as a positive controller-side scale memory.

The controller is organized into four functional modules:
\[
    \operatorname{Feat}
    \;\rightarrow\;
    \mathcal E_\theta
    \;\rightarrow\;
    g_\theta
    \;\rightarrow\;
    \operatorname{Adm}.
\]
Here \(\operatorname{Feat}\) converts raw trajectory information into a
scale-normalized observation vector
\[\phi^k=\operatorname{Feat}(y^k,z^{k-1},d,\omega_{k-1},k,K).\]
The features summarize three types of information: residual information
describing the current optimization state, action-history information
describing the previous update, and time-budget information describing the
current position within the fixed rollout horizon.

The encoder \(\mathcal E_\theta\) maps the feature history to a trajectory
state and is implemented as a lightweight GRU. The control head
\(g_\theta\) maps this state to unconstrained raw coordinates, and
\(\operatorname{Adm}\) transforms the raw outputs into admissible controls
satisfying \(\rho_k>0\), \(\alpha_k\in(0,2)\), and \(\beta_k>0\).
This construction gives a recurrent feedback policy followed by output corrections.

We use two controller variants. \textsc{RCDRS-NoEnv} applies the admissible
action map directly and allows more flexible finite-horizon adaptation.
\textsc{RCDRS-Env} additionally restricts the controls around a
validation-selected base action through a decaying envelope. The safeguarded
variant is used in the time-varying analysis in Section~\ref{sec:theory},
where the controlled rollout is related to a limiting averaged DRS operator.

\paragraph{Controller specification.}
For completeness, we give the exact features and control parameterization
used in the experiments. With the initialization conventions
\(z^{-1}=z^0\) and
\(\omega_{-1}=(\rho_0,\alpha_0,\beta_0)\), the normalized trajectory
quantities are
\(
    \eta_{\rm con}^k
    :=
    \frac{\|x^k-z^k\|_2}
    {1+\|z^k\|_2},
    \eta_{\rm eq}^k
    :=
    \frac{\|Az^k-b\|_2}
    {1+\|b\|_2},
    \eta_{\rm dz}^k
    :=
    \frac{\|z^k-z^{k-1}\|_2}
    {1+\|z^k\|_2},
    \eta_{\rm obj}^k
    :=
    \frac{|c^\top z^k|}
    {1+\|c\|_2\|z^k\|_2},
    \Delta_{\rm obj}^k
    :=
    \operatorname{asinh}
    \left(
    \frac{c^\top z^k-c^\top z^{k-1}}
    {1+|c^\top z^{k-1}|}
    \right).
\)
Here \(\eta_{\rm con}^k\), \(\eta_{\rm eq}^k\), and
\(\eta_{\rm dz}^k\) describe splitting consistency, affine feasibility,
and iterate movement, while \(\eta_{\rm obj}^k\) and
\(\Delta_{\rm obj}^k\) describe objective magnitude and signed progress.

The previous-action features are
\(\xi_\rho^{k-1}:=\log(\rho_{k-1}/\rho_0)\),
\(\xi_\alpha^{k-1}:=\alpha_{k-1}\), and
\(\xi_\beta^{k-1}:=\log(\beta_{k-1}/\beta_0)\). The complete observation
vector is
\begin{align*}
    &\phi^k=\big[
    \log(1+\eta_{\rm con}^k),
    \log(1+\eta_{\rm eq}^k),
    \log(1+\eta_{\rm dz}^k),\\
    &\log(1+\eta_{\rm obj}^k),
    \Delta_{\rm obj}^k,
    \xi_\rho^{k-1},
    \xi_\alpha^{k-1},
    \xi_\beta^{k-1},
    k/K,\,
    (K-k)/K
    \big].
\end{align*}
The logarithmic transformation compresses nonnegative quantities that may
vary by several orders of magnitude, while
\(\operatorname{asinh}\) compresses objective progress while preserving
its sign. Cone violation is omitted because \(z^k\in\mathcal K\) is
enforced by the cone projection.

The GRU updates its hidden state as
\(h^k=\mathcal E_\theta(\phi^k,h^{k-1})\), and the control head produces
\(r^k=(r_\rho^k,r_\alpha^k,r_\beta^k)=g_\theta(h^k)\). For
\textsc{RCDRS-NoEnv}, the raw outputs are mapped to admissible intervals by
\begin{align*}
    \log\rho_k
    &=
    \log\rho_{\min}
    +
    \sigma(r_\rho^k)
    \bigl(\log\rho_{\max}-\log\rho_{\min}\bigr),\\
    \alpha_k
    &=
    \alpha_{\min}
    +
    \sigma(r_\alpha^k)
    (\alpha_{\max}-\alpha_{\min}),\\
    \log\beta_k
    &=
    \log\beta_{\min}
    +
    \sigma(r_\beta^k)
    \bigl(\log\beta_{\max}-\log\beta_{\min}\bigr),
\end{align*}
where \(0<\rho_{\min}<\rho_{\max}\),
\(0<\alpha_{\min}<\alpha_{\max}<2\), and
\(0<\beta_{\min}<\beta_{\max}\).

For \textsc{RCDRS-Env}, let
\(\bar\omega=(\bar\rho,\bar\alpha,\bar\beta)\) be the base action. The safeguarded parameterization is
\begin{small}
\begin{equation}
\label{eq:envelope_param}
    \begin{aligned}
    \log\rho_k
    &=
    \log\bar\rho+\delta_k\tanh(r_\rho^k),
    \alpha_k
    =
    \bar\alpha+s_\alpha\delta_k\tanh(r_\alpha^k),\\
    \log\beta_k
    &=
    \log\bar\beta+\delta_k\tanh(r_\beta^k),
    \delta_k
    =
    \frac{\delta_0}{1+(k/k_0)^p}.
\end{aligned}
\end{equation}
\end{small}
The constants are selected so that
\(0<\bar\alpha-s_\alpha\delta_0
\leq\bar\alpha+s_\alpha\delta_0<2\). Since \(p>1\), the envelope in
\eqref{eq:envelope_param} satisfies
\(\sum_{k=0}^{\infty}\delta_k<\infty\), which is used in the
time-varying analysis. Optional multiplicative action filters and numerical
parameter settings are provided in the supplementary material.

\subsection{Terminal-oriented training and solver layer}
\label{subsec:terminal_training}

The proposed layer is trained with a terminal-oriented self-supervised
objective. Since deployment uses a prescribed depth \(K\), the loss is
evaluated at the terminal state rather than requiring monotone improvement
at every iteration.

For each training instance, we run both the controlled layer and a fixed-core
reference layer for the same depth. Let
\(y_{\rm rc}^K=(x_{\rm rc}^K,z_{\rm rc}^K,u_{\rm rc}^K)\) and
\(y_{\rm base}^K=(x_{\rm base}^K,z_{\rm base}^K,u_{\rm base}^K)\)
denote their terminal states. The training objective targets terminal affine
feasibility, splitting consistency, iterate stabilization, objective quality,
action smoothness, and finite-budget dominance over the reference rollout:
\[
\begin{aligned}
    \min_\theta~
    \mathbb E_{d\sim\mathcal D}
    \big[
        &\lambda_{\rm eq}\mathcal L_{\rm eq}
        +\lambda_{\rm con}\mathcal L_{\rm con}
        +\lambda_{\rm mov}\mathcal L_{\rm mov}\\
        &+\lambda_{\rm obj}\mathcal L_{\rm obj}
        +\lambda_{\rm smooth}\mathcal L_{\rm smooth}
        +\lambda_{\rm dom}\mathcal L_{\rm dom}
    \big].
\end{aligned}
\]

\paragraph{Loss definitions.}
The terminal residual losses are \(\mathcal L_{\rm eq}
    :=
    \frac{\|Az_{\rm rc}^K-b\|_2^2}
    {(1+\|b\|_2)^2},
    \mathcal L_{\rm con}
    :=
    \frac{\|x_{\rm rc}^K-z_{\rm rc}^K\|_2^2}
    {(1+\|z_{\rm rc}^K\|_2)^2},
    \mathcal L_{\rm mov}
    :=
    \frac{\|z_{\rm rc}^K-z_{\rm rc}^{K-1}\|_2^2}
    {(1+\|z_{\rm rc}^K\|_2)^2}.\)
Here \(\mathcal L_{\rm con}\) measures splitting consistency rather than
cone infeasibility, since the cone projection already enforces
\(z_{\rm rc}^K\in\mathcal K\). The baseline-relative objective loss is
\(
    \mathcal L_{\rm obj}
    :=
    \left[
    \frac{
    c^\top z_{\rm rc}^K-c^\top z_{\rm base}^K
    }
    {1+|c^\top z_{\rm base}^K|}
    \right]_+ ,
\)
which is positive only when the controlled rollout has a worse terminal
objective than the fixed-core reference at the same depth.

Let \(\ell_k:=(\log\rho_k,\alpha_k,\log\beta_k)\). The action-smoothness
term is
\(\mathcal L_{\rm smooth}:=\sum_{k=1}^{K-1}
\|\ell_k-\ell_{k-1}\|_2^2\). To define the dominance term, let
\[
    \mathcal M_{\rm rc}
    :=
    \lambda_{\rm eq}\mathcal L_{\rm eq}
    +
    \lambda_{\rm con}\mathcal L_{\rm con}
    +
    \lambda_{\rm mov}\mathcal L_{\rm mov}
    +
    \lambda_{\rm obj}\mathcal L_{\rm obj},
\]
and define the corresponding reference merit as
\(\mathcal M_{\rm base}:=
\lambda_{\rm eq}\mathcal L_{\rm eq}^{\rm base}
+\lambda_{\rm con}\mathcal L_{\rm con}^{\rm base}
+\lambda_{\rm mov}\mathcal L_{\rm mov}^{\rm base}\), where the base losses
use the same definitions evaluated on \(y_{\rm base}^K\). The optional
dominance loss is
\(\mathcal L_{\rm dom}:=
[\mathcal M_{\rm rc}-\mathcal M_{\rm base}+m]_+\), \(m\geq0\).

All loss terms are computed from the finite-depth trajectories. No reference
optimal solution, optimal value, or KKT multiplier is used as a supervised
label. After training, \textsc{RCDRS} defines the fixed-depth map
\(\operatorname{RCDRS}_\theta^K:(A,b,c,\mathcal K)\mapsto z^K\).
The map can be used directly as an optimizer or inserted into an end-to-end
learning pipeline, where an upstream model predicts problem data and the task
loss is backpropagated through the unrolled solver layer.

\section{Theoretical Analysis}
\label{sec:theory}

This section establishes the operator-theoretic properties of
\textsc{RCDRS}. We call a pair \((\alpha,\beta)\) admissible if
\(\alpha\in(0,2)\) and \(\beta>0\). The analysis has three goals. First,
we show that each fixed admissible block is a solution-preserving averaged
DRS operator and derive a terminal residual bound. Second, we analyze
safeguarded feedback rollouts as summable perturbations of a limiting
averaged operator. Third, we establish the consistency of the terminal
primal--dual readout. The fixed-block results apply to both controller
variants, whereas the time-varying analysis applies only to
\textsc{RCDRS-Env}. All proofs and auxiliary lemmas are provided in the
supplementary material.

\subsection{Fixed-block DRS interpretation}
\label{subsec:theory_fixed_block}

For any fixed \(\beta>0\), define
\(f_\beta(x):=\beta\bar c^\top x+\mathbb I_{\mathcal A}(x)\),
\(g(x):=\mathbb I_{\mathcal K}(x)\), and
\(\bar c=c/(\|c\|_2+\varepsilon_c)\). Let
\(J_{f_\beta}:=(I+\partial f_\beta)^{-1}\) and
\(J_g:=(I+\partial g)^{-1}\) be the corresponding resolvents, with
reflected resolvents \(R_{f_\beta}:=2J_{f_\beta}-I\) and
\(R_g:=2J_g-I\). We define
\(T_\beta:=\frac12(I+R_{f_\beta}R_g)\) and, for
\(\alpha\in(0,2)\),
\[
    T_{\alpha,\beta}
    :=
    (1-\alpha)I+\alpha T_\beta
    =
    I+\alpha(J_{f_\beta}R_g-J_g).
\]
The state correspondence is initialized by \(v^0=z^0+u^0\) and
\(z^0=J_g(v^0)\), which is satisfied by the standard initialization
\(z^0=u^0=0\).

\begin{proposition}[Fixed-block interpretation]
\label{prop:fixed_block_interpretation}
For any \(\beta>0\) and \(\alpha\in(0,2)\), the following statements
hold.
\begin{enumerate}
    \item The objective-scaled CLP has the same primal solution set as the
    original CLP:
    \(
        \arg\min_{x\in\mathcal A\cap\mathcal K}\beta\bar c^\top x
        =
        \arg\min_{x\in\mathcal A\cap\mathcal K}c^\top x .
    \)
    \item If \(v^k=z^k+u^k\) and \(z^k=J_g(v^k)\), then the state update
    in~\eqref{eq:rc_drs_block} with fixed \((\alpha,\beta)\) satisfies
    \(v^{k+1}:=z^{k+1}+u^{k+1}=T_{\alpha,\beta}v^k\) and
    \(z^{k+1}=J_g(v^{k+1})\).
    \item The operator \(T_{\alpha,\beta}\) is
    \(\alpha/2\)-averaged and nonexpansive.
\end{enumerate}
\end{proposition}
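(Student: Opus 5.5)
Part 1 needs only positivity of the scaling. Since \(\beta>0\) and \(\|c\|_2+\varepsilon_c>0\), we have \(\beta\bar c^\top x=\kappa\,c^\top x\) with \(\kappa:=\beta/(\|c\|_2+\varepsilon_c)>0\). Multiplying an objective by a positive constant preserves the order of objective values on the feasible set \(\mathcal A\cap\mathcal K\), so the argmin sets coincide. This includes the empty case and the case \(c=0\), where both sides equal \(\mathcal A\cap\mathcal K\).

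For part 2 we first compute the two resolvents explicitly. Since \(g=\mathbb I_{\mathcal K}\), we get \(J_g=\Proj_{\mathcal K}\). For \(f_\beta\), the problem \(J_{f_\beta}(w)=\argmin_x\{\beta\bar c^\top x+\mathbb I_{\mathcal A}(x)+\tfrac12\|x-w\|^2\}\) reduces, after completing the square, to \(\Proj_{\mathcal A}(w-\beta\bar c)\). We assume \(\mathcal A\neq\emptyset\) so that \(\partial f_\beta\) is maximal monotone and the resolvent is single-valued.

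Now suppose \(v^k=z^k+u^k\) and \(z^k=\Proj_{\mathcal K}(v^k)\). Then \(R_g v^k=2z^k-v^k=z^k-u^k\), so \(x^{k+1}=\Proj_{\mathcal A}(z^k-u^k-\beta\bar c)=J_{f_\beta}R_g v^k\). Next, \(\bar x^{k+1}+u^k=\alpha x^{k+1}+(1-\alpha)z^k+u^k=v^k+\alpha(x^{k+1}-z^k)\), which by the identity \(T_{\alpha,\beta}=I+\alpha(J_{f_\beta}R_g-J_g)\) equals \(T_{\alpha,\beta}v^k\). Call this \(v^{k+1}\). Then \(z^{k+1}=\Proj_{\mathcal K}(v^{k+1})=J_g(v^{k+1})\), and \(u^{k+1}=u^k+\bar x^{k+1}-z^{k+1}=v^{k+1}-z^{k+1}\), so \(z^{k+1}+u^{k+1}=v^{k+1}\). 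Both invariants are therefore propagated, and induction from the initialization (which holds for \(z^0=u^0=0\) because \(\Proj_{\mathcal K}(0)=0\)) gives the claim for all \(k\).

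We also verify the stated identity \(T_{\alpha,\beta}=I+\alpha(J_{f_\beta}R_g-J_g)\). Expanding \(R_{f_\beta}R_g=2J_{f_\beta}R_g-R_g\) gives \(T_\beta=\tfrac12(I+2J_{f_\beta}R_g-2J_g+I)=I+J_{f_\beta}R_g-J_g\), and the relaxed form follows directly.

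For part 3, both \(f_\beta\) (a linear function plus the indicator of a nonempty closed convex set) and \(g\) are proper, closed and convex. Hence \(J_{f_\beta}\) and \(J_g\) are firmly nonexpansive, and \(R_{f_\beta}\), \(R_g\) are nonexpansive. Their composition \(R_{f_\beta}R_g\) is nonexpansive, so \(T_\beta=\tfrac12 I+\tfrac12 R_{f_\beta}R_g\) is \(\tfrac12\)-averaged. By definition this means \(T_\beta=\tfrac12 I+\tfrac12 N\) with \(N:=R_{f_\beta}R_g\) nonexpansive. Then \(T_{\alpha,\beta}=(1-\alpha)I+\alpha T_\beta=(1-\alpha/2)I+(\alpha/2)N\), which is \(\alpha/2\)-averaged because \(\alpha/2\in(0,1)\). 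Averaged operators are nonexpansive, which completes the proof. No step is a real obstacle; the only point needing care is the resolvent computation for \(f_\beta\) together with the nonemptiness assumption on \(\mathcal A\).
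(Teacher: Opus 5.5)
Your proposal is correct and follows essentially the same route as the paper: positive rescaling for part 1, the explicit resolvents \(J_{f_\beta}=\Proj_{\mathcal A}(\cdot-\beta\bar c)\) and \(J_g=\Proj_{\mathcal K}\) together with \(R_gv^k=z^k-u^k\) for part 2, and the representation \(T_{\alpha,\beta}=(1-\alpha/2)I+(\alpha/2)R_{f_\beta}R_g\) for part 3. Your two small additions are welcome: you check the identity \(T_{\alpha,\beta}=I+\alpha(J_{f_\beta}R_g-J_g)\), which the paper only asserts, and you observe that \(\beta\bar c=\kappa c\) also covers \(c=0\), so no separate case is needed.
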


Proposition~\ref{prop:fixed_block_interpretation} shows that the
\(\beta\)-parameterized transition preserves the primal solution set and
the averaged DRS structure. The next result strengthens the usual
best-iterate estimate to a terminal residual bound.

\begin{proposition}[Terminal residual bound for a fixed block]
\label{prop:finite_step_fixed_residual}
Let \(\alpha\in(0,2)\), \(\beta>0\), and suppose that
\(\Fix(T_{\alpha,\beta})\neq\emptyset\). Generate
\(v^{k+1}=T_{\alpha,\beta}v^k\). Then, for every \(N\ge1\),
\[
    \|v^N-T_{\alpha,\beta}v^N\|_2^2
    \le
    \frac{\alpha}{(2-\alpha)N}
    \operatorname{dist}^2
    \bigl(v^0,\Fix(T_{\alpha,\beta})\bigr).
\]
\end{proposition}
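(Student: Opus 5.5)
The plan is the standard two-step argument for averaged operators. First we get a summable bound on the residuals. Then we show the residual norm is monotone nonincreasing, which turns a best-iterate bound into a last-iterate bound. Write \(T:=T_{\alpha,\beta}\) and \(\theta:=\alpha/2\in(0,1)\). By Proposition~\ref{prop:fixed_block_interpretation}(3), \(T=(1-\theta)I+\theta S\) with \(S:=T_\beta\) nonexpansive. Note that \(\Fix(T)=\Fix(S)\), and that \(v-Tv=\theta(v-Sv)\).

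\textbf{Step 1 (Fejér inequality).} Fix \(v^\star\in\Fix(T)\). For a \(\theta\)-averaged map the standard identity \(\|(1-\theta)a+\theta b\|^2=(1-\theta)\|a\|^2+\theta\|b\|^2-\theta(1-\theta)\|a-b\|^2\) applies. Take \(a=v^k-v^\star\) and \(b=Sv^k-v^\star\), and use \(\|b\|\le\|a\|\). This gives
\[
\|v^{k+1}-v^\star\|^2\le\|v^k-v^\star\|^2-\tfrac{1-\theta}{\theta}\|v^k-Tv^k\|^2 .
\]
Here \(\tfrac{1-\theta}{\theta}=\tfrac{2-\alpha}{\alpha}\). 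Summing over \(k=0,\dots,N-1\) yields
\[
\sum_{k=0}^{N-1}\|v^k-Tv^k\|^2\le\tfrac{\alpha}{2-\alpha}\|v^0-v^\star\|^2 .
\]
Because \(\Fix(T)\) is closed and convex (the fixed-point set of a nonexpansive map) and nonempty by assumption, we may choose \(v^\star=\Proj_{\Fix(T)}(v^0)\). The right-hand side then becomes \(\tfrac{\alpha}{2-\alpha}\dist^2(v^0,\Fix(T))\).

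\textbf{Step 2 (monotone residuals).} Let \(r^k:=v^k-Tv^k\). Then
\[
r^{k+1}=Tv^k-T(Tv^k)\quad\text{so}\quad \|r^{k+1}\|=\|Tv^k-T(Tv^k)\|\le\|v^k-Tv^k\|=\|r^k\|
\]
by nonexpansiveness of \(T\). So \(\|r^k\|\) is nonincreasing.

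\textbf{Step 3 (terminal bound and indexing).} By monotonicity, the smallest residual among the first \(N\) iterates is the last one, \(\|r^{N-1}\|\). Combining with Step 1 gives
\[
N\|r^{N}\|^2\le N\|r^{N-1}\|^2\le\sum_{k=0}^{N-1}\|r^k\|^2\le\tfrac{\alpha}{2-\alpha}\dist^2(v^0,\Fix(T)) .
\]
Dividing by \(N\) gives the stated bound for \(v^N\).

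The only delicate point is this indexing: the sum in Step 1 runs over \(r^0,\dots,r^{N-1}\), while the claim is about \(r^N\). Monotonicity covers the gap, since \(\|r^N\|\le\|r^{N-1}\|\). (One could in fact sum to \(N\) and obtain a factor \(N+1\), which is slightly stronger.) No other step presents a real obstacle. The result relies only on averagedness from Proposition~\ref{prop:fixed_block_interpretation}(3) and on closedness and convexity of \(\Fix(T)\).
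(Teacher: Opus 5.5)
Your argument is essentially the paper's proof: the Fej\'er-type descent inequality from $\alpha/2$-averagedness, summation over $k=0,\dots,N-1$, and monotonicity of $\|v^k-T_{\alpha,\beta}v^k\|_2$ from nonexpansiveness. The paper takes an infimum over $v^\star\in\Fix(T_{\alpha,\beta})$ where you project $v^0$ onto $\Fix(T_{\alpha,\beta})$, and the two are equivalent. One slip needs fixing: the nonexpansive operator in the $\alpha/2$-averaged decomposition is $S=N_\beta=R_{f_\beta}R_g=2T_\beta-I$, not $T_\beta$. Since $T_{\alpha,\beta}=(1-\alpha)I+\alpha T_\beta$, the choice $S=T_\beta$ puts weight $\theta$ instead of $2\theta$ on $T_\beta$, so both $T=(1-\theta)I+\theta S$ and $v-Tv=\theta(v-Sv)$ fail by a factor of two. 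With $S=N_\beta$, every step you wrote goes through unchanged.
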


The bound follows from the descent inequality for averaged operators and the
monotonicity of \(\|v^k-T_{\alpha,\beta}v^k\|_2\) under repeated
application of the fixed nonexpansive operator.

\subsection{Safeguarded feedback rollout}
\label{subsec:theory_time_varying}

The learned controller produces a time-varying sequence
\((\alpha_k,\beta_k)\), for which the fixed-block result does not directly
apply. We therefore analyze the safeguarded parameterization in
\eqref{eq:envelope_param}. Let \(T_k:=T_{\alpha_k,\beta_k}\) and
\(T_\infty:=T_{\bar\alpha,\bar\beta}\). With
\(q_{\alpha,k}:=\tanh(r_\alpha^k)\) and
\(q_{\beta,k}:=\tanh(r_\beta^k)\), the envelope gives
\(\alpha_k=\bar\alpha+s_\alpha\delta_kq_{\alpha,k}\) and
\(\log\beta_k=\log\bar\beta+\delta_kq_{\beta,k}\), where
\(q_{\alpha,k},q_{\beta,k}\in[-1,1]\) and
\(\sum_{k=0}^{\infty}\delta_k<\infty\). Thus \(T_k\) approaches the
limiting admissible operator \(T_\infty\).

\begin{theorem}[Residual bounds under the safeguarded envelope]
\label{thm:finite_step_envelope}
Let \(v^{k+1}=T_kv^k\), suppose that
\(\Fix(T_\infty)\neq\emptyset\), the trajectory \(\{v^k\}\) is bounded,
and the envelope in~\eqref{eq:envelope_param} holds. Then there exist
summable nonnegative sequences \(\{\epsilon_k\}\) and \(\{\xi_k\}\) such
that \(\|T_kv^k-T_\infty v^k\|_2\le\epsilon_k\). For any
\(v^\star\in\Fix(T_\infty)\), let
\(c_\infty:=(2-\bar\alpha)/\bar\alpha\) and
\(D_\infty:=\|v^0-v^\star\|_2^2+\sum_{k=0}^{\infty}\xi_k\). Then, for
every \(N\ge1\), \(\min_{0\le k<N}\|v^k-T_\infty v^k\|_2^2\le\frac{D_\infty}{c_\infty N},\|v^N-T_\infty v^N\|_2\le
\sqrt{\frac{2D_\infty}{c_\infty N}}+2\sum_{k=\lfloor N/2\rfloor}^{\infty}\epsilon_k .\)

\end{theorem}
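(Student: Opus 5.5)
The proof has three steps: bound the per-step perturbation $\epsilon_k$, feed it into the averaged-operator descent inequality for $T_\infty$ to get the min-residual bound, and then transfer that bound to the terminal iterate using nonexpansiveness.

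\textbf{Step 1: the perturbation bound $\epsilon_k$.} We write $T_k-T_\infty$ in resolvent form:
\[
T_kv-T_\infty v=(\alpha_k-\bar\alpha)(J_{f_{\beta_k}}R_g v-J_g v)+\bar\alpha\bigl(J_{f_{\beta_k}}-J_{f_{\bar\beta}}\bigr)R_g v .
\]
The resolvent of $f_\beta$ is explicit: $J_{f_\beta}(w)=\Proj_{\mathcal A}(w-\beta\bar c)$. Since $\Proj_{\mathcal A}$ is nonexpansive,
\[
\|J_{f_{\beta_k}}w-J_{f_{\bar\beta}}w\|_2\le|\beta_k-\bar\beta|\,\|\bar c\|_2\le|\beta_k-\bar\beta| .
\]
From the envelope, $|\alpha_k-\bar\alpha|\le s_\alpha\delta_k$. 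Also $|\beta_k-\bar\beta|=\bar\beta|e^{\delta_kq_{\beta,k}}-1|\le\bar\beta(e^{\delta_0}-1)\delta_k/\delta_0$, using convexity of $t\mapsto e^t-1$ on $[0,\delta_0]$. Because $\{v^k\}$ is bounded and $J_{f_\beta},R_g,J_g$ are Lipschitz with offsets bounded uniformly for $\beta$ in a compact range, the factor $\|J_{f_{\beta_k}}R_gv^k-J_gv^k\|$ is bounded by a constant $M$. Hence $\epsilon_k:=C\delta_k$ for a suitable constant $C$, and it is summable.

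\textbf{Step 2: the min-residual bound.} Write $v^{k+1}=T_\infty v^k+e_k$ with $\|e_k\|\le\epsilon_k$. Proposition~\ref{prop:fixed_block_interpretation} says $T_\infty$ is $\bar\alpha/2$-averaged, which gives the descent inequality
\[
\|T_\infty v^k-v^\star\|^2\le\|v^k-v^\star\|^2-c_\infty\|v^k-T_\infty v^k\|^2 .
\]
Expanding $\|v^{k+1}-v^\star\|^2$ and using $\|T_\infty v^k-v^\star\|\le\|v^k-v^\star\|\le B$ (boundedness) yields
\[
\|v^{k+1}-v^\star\|^2\le\|v^k-v^\star\|^2-c_\infty\|v^k-T_\infty v^k\|^2+\xi_k,\qquad \xi_k:=2B\epsilon_k+\epsilon_k^2 ,
\]
and $\xi_k$ is summable. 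Telescoping over $k=0,\dots,N-1$ gives $c_\infty\sum_{k<N}\|v^k-T_\infty v^k\|^2\le D_\infty$, which implies the min bound.

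\textbf{Step 3: the terminal bound (main obstacle).} Monotonicity of the residual fails for the perturbed iteration, so the argument needs an approximate monotonicity. Let $r_k:=\|v^k-T_\infty v^k\|$. Nonexpansiveness of $T_\infty$ gives
\[
\|v^{k+1}-T_\infty v^{k+1}\|\le\|T_\infty v^k+e_k-T_\infty(T_\infty v^k)\|+\|T_\infty(T_\infty v^k)-T_\infty v^{k+1}\|\le r_k+2\epsilon_k .
\]
Iterating, $r_N\le r_j+2\sum_{k=j}^{N-1}\epsilon_k$ for every $j\le N$. Now apply the telescoped sum of Step 2 restricted to the window $\lfloor N/2\rfloor\le j<N$. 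That window has at least $N/2$ terms, and its sum is at most $D_\infty/c_\infty$, so some $j$ in it satisfies $r_j^2\le 2D_\infty/(c_\infty N)$. Since $j\ge\lfloor N/2\rfloor$, the tail sum $\sum_{k=j}^{N-1}\epsilon_k$ is at most $\sum_{k\ge\lfloor N/2\rfloor}\epsilon_k$, and combining gives the stated bound. The delicate part is Step 1: confirming that the uniform constant $M$ is finite. This uses boundedness of the trajectory together with the compactness of the admissible range of $(\alpha_k,\beta_k)$.
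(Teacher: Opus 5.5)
Your proposal is correct and follows essentially the same route as the paper: an $O(\delta_k)$ operator-perturbation bound obtained from the explicit resolvent $J_{f_\beta}(w)=\Proj_{\mathcal A}(w-\beta\bar c)$ together with boundedness of the trajectory, then the perturbed Fej\'er inequality with $\xi_k=2B\epsilon_k+\epsilon_k^2$, then the window argument over $\{\lfloor N/2\rfloor,\ldots,N-1\}$ combined with $r_{k+1}\le r_k+2\epsilon_k$. The differences are cosmetic. You split $T_k-T_\infty$ using the resolvent form $I+\alpha(J_{f_\beta}R_g-J_g)$, whereas the paper uses the averaged form with $N_\beta=R_{f_\beta}R_g$. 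You also bound $|\beta_k-\bar\beta|$ by convexity of $t\mapsto e^t-1$, whereas the paper uses the mean-value estimate $|\beta-\beta'|\le\beta_{\max}|\log\beta-\log\beta'|$.
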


The perturbation satisfies \(\epsilon_k=O(\delta_k)\). For
\(\delta_k=\delta_0/(1+(k/k_0)^p)\), this gives
\(\|v^N-T_\infty v^N\|_2
=O(N^{-1/2}+N^{1-p})\); in particular, the squared terminal residual is
\(O(1/N)\) when \(p\ge3/2\). The theorem shows that safeguarded adaptation
retains an asymptotic connection to a limiting averaged DRS operator. This
rollout-level result applies to \textsc{RCDRS-Env};
\textsc{RCDRS-NoEnv} retains only the fixed-block properties.

\subsection{Terminal primal--dual diagnostics}
\label{subsec:theory_primal_dual_certificate}

The retained DRS structure also provides a terminal primal--dual readout.
For a \(K\)-step rollout, the last transition uses \(\beta_{K-1}\), and
the terminal slack and equality multiplier are recovered by
\eqref{eq:dualdiag}. The solver layer returns \(z^K\) as the primal
decision and \((\lambda^K,s^K)\) as diagnostic quantities.

We use the normalized residuals
\begin{small}
\begin{equation}
\label{eq:diags}
\begin{aligned}
    r_{\rm p}^K
    &:=
    \frac{\|Az^K-b\|_2}{1+\|b\|_2},~~
    r_{\rm d}^K
    :=
    \frac{\|A^\top\lambda^K+s^K-c\|_2}{1+\|c\|_2},\\
    r_{\rm gap}^K
    &:=
    \frac{|c^\top z^K-b^\top\lambda^K|}
    {1+|c^\top z^K|+|b^\top\lambda^K|}.
\end{aligned}
\end{equation}
\end{small}
These quantities measure primal affine feasibility, dual stationarity, and
the primal--dual gap.

\begin{theorem}[Consistency of the terminal primal--dual readout]
\label{thm:terminal_primal_dual_certificate}
Assume that \(\mathcal K\) is a closed convex cone and that the cone
projection is evaluated exactly. For every terminal state generated by
\eqref{eq:rc_drs_block}, the recovered slack satisfies
\(s^K\in\mathcal K^\ast\) and
\(\langle z^K,s^K\rangle=0\). Moreover, for a fixed CLP, consider a
bounded sequence of terminal readouts
\(\{(z_j,\lambda_j,s_j)\}_{j\ge1}\). If the corresponding residuals
satisfy
\(
r_{{\rm p},j}\to0,\;
r_{{\rm d},j}\to0,\;
r_{{\rm gap},j}\to0,
\)
then every cluster point
\((z^\star,\lambda^\star,s^\star)\) is primal--dual optimal for the
original CLP.
\end{theorem}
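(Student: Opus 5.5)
The first claim is a direct consequence of the Moreau decomposition applied to the last cone projection. The second is a limiting argument that combines closedness of \(\mathcal K\) and \(\mathcal K^\ast\) with weak duality for the CLP~\eqref{eq:clp}. I assume \(K\ge1\), so that at least one transition \eqref{eq:rc_drs_block} has been applied.

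\textbf{Step 1 (sign and complementarity of \(s^K\)).} Set \(p:=\bar x^K+u^{K-1}\). By \eqref{eq:rc_drs_block}, \(z^K=\Proj_{\mathcal K}(p)\) and \(u^K=u^{K-1}+\bar x^K-z^K=p-\Proj_{\mathcal K}(p)\). Because \(\mathcal K\) is a closed convex cone and the projection is exact, Moreau's decomposition gives
\[
p=\Proj_{\mathcal K}(p)+\Proj_{\mathcal K^\circ}(p),
\qquad
\langle \Proj_{\mathcal K}(p),\Proj_{\mathcal K^\circ}(p)\rangle=0,
\]
where \(\mathcal K^\circ=-\mathcal K^\ast\) is the polar cone. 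Hence \(u^K=\Proj_{\mathcal K^\circ}(p)\in-\mathcal K^\ast\) and \(\langle z^K,u^K\rangle=0\). Since \(\kappa_{K-1}=\beta_{K-1}/(\|c\|_2+\varepsilon_c)>0\) for an admissible \(\beta_{K-1}\), scaling by \(-1/\kappa_{K-1}\) yields \(s^K=-u^K/\kappa_{K-1}\in\mathcal K^\ast\) and \(\langle z^K,s^K\rangle=0\). This holds for any controller sequence, because only the last step and \(\beta_{K-1}>0\) are used. Note that \(\lambda^K\) plays no role here.

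\textbf{Step 2 (passing to a cluster point).} Take a bounded sequence of readouts \((z_j,\lambda_j,s_j)\) and a subsequence converging to \((z^\star,\lambda^\star,s^\star)\). Each readout comes from a terminal state, so by Step 1 we have \(z_j\in\mathcal K\), \(s_j\in\mathcal K^\ast\), and \(\langle z_j,s_j\rangle=0\).

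\begin{enumerate}
    \item Closedness of \(\mathcal K\) and \(\mathcal K^\ast\) gives \(z^\star\in\mathcal K\) and \(s^\star\in\mathcal K^\ast\). Continuity of the inner product gives \(\langle z^\star,s^\star\rangle=0\).
    \item The denominators of \(r_{\rm p}\) and \(r_{\rm d}\) are constants for a fixed CLP. Hence \(r_{{\rm p},j}\to0\) and \(r_{{\rm d},j}\to0\) imply \(Az^\star=b\) and \(A^\top\lambda^\star+s^\star=c\).
    \item For the gap, the denominator \(1+|c^\top z_j|+|b^\top\lambda_j|\) is uniformly bounded because the sequence is bounded. Therefore \(r_{{\rm gap},j}\to0\) forces \(c^\top z_j-b^\top\lambda_j\to0\), and so \(c^\top z^\star=b^\top\lambda^\star\). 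This also follows independently, since \(c^\top z^\star-b^\top\lambda^\star=\langle s^\star,z^\star\rangle=0\).
\end{enumerate}

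\textbf{Step 3 (optimality via weak duality).} Let \(x\) be any primal-feasible point, i.e. \(x\in\mathcal A\cap\mathcal K\). Then
\[
c^\top x=(A^\top\lambda^\star+s^\star)^\top x=b^\top\lambda^\star+\langle s^\star,x\rangle\ge b^\top\lambda^\star=c^\top z^\star .
\]
So \(z^\star\) is primal optimal. Now let \((\lambda,s)\) be any dual-feasible pair, i.e. \(A^\top\lambda+s=c\) with \(s\in\mathcal K^\ast\). Then
\[
b^\top\lambda=(z^\star)^\top A^\top\lambda=c^\top z^\star-\langle s,z^\star\rangle\le c^\top z^\star=b^\top\lambda^\star .
\]
So \((\lambda^\star,s^\star)\) is dual optimal, and the triple is primal--dual optimal with zero duality gap.

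The only delicate point is the gap residual. Its normalization depends on the iterates themselves, so convergence of \(r_{{\rm gap},j}\) to zero yields convergence of the unnormalized gap only because the readout sequence is assumed bounded. Everything else is Moreau's decomposition plus closedness of the two cones.
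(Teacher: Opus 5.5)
Your proposal is correct and follows the paper's proof essentially step for step. Where the paper derives $u^K\in N_{\mathcal K}(z^K)=(-\mathcal K^\ast)\cap(z^K)^\perp$ from the last projection, you use Moreau's decomposition, which is an equivalent fact; the rest (closedness of $\mathcal K$ and $\mathcal K^\ast$, vanishing residuals with a bounded gap denominator, then weak duality) is the same, and your remark that exact complementarity passes to the limit correctly shows the $r_{{\rm gap},j}\to0$ hypothesis is actually redundant.
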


Together, these results establish the theoretical role of the proposed
controller. Fixed admissible actions remain solution-preserving averaged
DRS blocks, safeguarded feedback rollouts remain controlled perturbations of
a limiting averaged operator, and vanishing terminal diagnostics recover
primal--dual optimality.

\section{Computational Experiments}
\label{sec:experiments}

The experiments examine \textit{whether residual feedback improves fixed-budget
optimization, which components produce the gains, whether these gains
transfer to downstream decision learning, and whether the resulting layer
improves operational decisions.} Unless otherwise stated, results are
mean\(\pm\)standard deviation over three seeds. All methods use the same data
splits, initial states, and evaluation depths. Parameters and checkpoints are
selected on validation data, and learned layers are trained and evaluated at
the same depth. Reference optima are used only for validation and evaluation,
never as training labels. Further settings and results are provided in the
supplementary material.

\paragraph{Compared methods.}
The baselines include fixed \textsc{F-DRS} and validation-tuned
\textsc{Grid-DRS}, both based on relaxed DRS~\cite{eckstein1992drs};
spectral \textsc{S-ADMM}~\cite{xu2017spectraladmm}; safeguarded
Anderson-accelerated \textsc{AA-DRS}~\cite{fu2019a2dr}; learned
\textsc{L-PDHG}, based on primal--dual splitting and learned primal--dual
unrolling~\cite{chambolle2011first,adler2018learned}; and open-loop
\textsc{Layerwise-DRS}, following the algorithm-unrolling
paradigm~\cite{monga2021algorithm}. \textsc{RCDRS-Env} uses a
validation-centered decaying envelope, whereas \textsc{RCDRS-NoEnv} uses
the same feedback controller without the envelope. In DFL, \textsc{PTO}
denotes the prediction-then-optimization baseline~\cite{elmachtoub2022spo};
suffixes \textsc{-F} and \textsc{-J} denote a frozen controller and joint
predictor--controller fine-tuning, respectively.

\paragraph{Common diagnostics.}
For terminal decision \(z^K\) and primal--dual readout
\((\lambda^K,s^K)\), we report
\(\mathrm{ObjErr}:=|c^\top z^K-p^\star|/(1+|p^\star|)\) together with
\(r_{\rm p},r_{\rm d},r_{\rm gap}\) from~\eqref{eq:diags}. Since infeasible
points may have favorable objectives, objective metrics are interpreted
jointly with these residuals. Online time is measured in milliseconds per instance
and includes features, controller evaluation, projections, and state updates,
but excludes training and validation tuning.

\subsection{Fixed-Budget Conic Optimization}
\label{subsec:synthetic_experiments}

We evaluate two hard CLP families: an SOCP benchmark and a
mixed-cone benchmark combining nonnegative, second-order, rotated
second-order, and positive-semidefinite blocks. We report \(K=20\) here; other results are given in supplementary material.

\begin{table*}[!t]
\centering
\scriptsize
\setlength{\tabcolsep}{2.2pt}
\renewcommand{\arraystretch}{0.92}
\caption{Fixed-budget conic optimization at \(K=20\). Values are
mean\(\pm\)standard deviation over three random seeds. The best
solution-quality value within each benchmark is bold.}
\label{tab:synthetic_main_k20}
\resizebox{0.8\textwidth}{!}{
\begin{tabular}{llccccc}
\toprule
Benchmark & Method
& \(\mathrm{ObjErr}\downarrow\)
& \(r_{\rm p}\downarrow\)
& \(r_{\rm d}\downarrow\)
& \(r_{\rm gap}\downarrow\)
& Time (ms) \(\downarrow\) \\
\midrule

\multirow{8}{*}{SOCP}
& \textsc{F-DRS}
& \(3.15\mathrm{e}{-1}\!\pm\!3.07\mathrm{e}{-2}\)
& \(2.28\mathrm{e}{-3}\!\pm\!1.18\mathrm{e}{-4}\)
& \(1.35\mathrm{e}{-1}\!\pm\!8.99\mathrm{e}{-3}\)
& \(2.08\mathrm{e}{-1}\!\pm\!8.13\mathrm{e}{-2}\)
& \(0.042\!\pm\!0.015\) \\
& \textsc{Grid-DRS}
& \(1.56\mathrm{e}{-3}\!\pm\!6.11\mathrm{e}{-5}\)
& \(3.08\mathrm{e}{-3}\!\pm\!1.71\mathrm{e}{-4}\)
& \(9.49\mathrm{e}{-3}\!\pm\!5.15\mathrm{e}{-4}\)
& \(1.90\mathrm{e}{-2}\!\pm\!2.93\mathrm{e}{-3}\)
& \(0.053\!\pm\!0.031\) \\
& \textsc{S-ADMM}
& \(2.07\mathrm{e}{-3}\!\pm\!1.60\mathrm{e}{-4}\)
& \(3.20\mathrm{e}{-3}\!\pm\!1.10\mathrm{e}{-4}\)
& \(4.99\mathrm{e}{-3}\!\pm\!1.39\mathrm{e}{-3}\)
& \(1.23\mathrm{e}{-2}\!\pm\!7.98\mathrm{e}{-4}\)
& \(0.096\!\pm\!0.022\) \\
& \textsc{AA-DRS}
& \(1.13\mathrm{e}{-3}\!\pm\!1.14\mathrm{e}{-5}\)
& \(1.60\mathrm{e}{-3}\!\pm\!6.04\mathrm{e}{-5}\)
& \(5.61\mathrm{e}{-3}\!\pm\!1.23\mathrm{e}{-4}\)
& \(1.69\mathrm{e}{-2}\!\pm\!1.33\mathrm{e}{-3}\)
& \(0.286\!\pm\!0.048\) \\
& \textsc{L-PDHG}
& \(2.42\mathrm{e}{-1}\!\pm\!4.62\mathrm{e}{-3}\)
& \(3.99\mathrm{e}{-2}\!\pm\!2.30\mathrm{e}{-3}\)
& \(3.33\mathrm{e}{-1}\!\pm\!1.96\mathrm{e}{-2}\)
& \(4.11\mathrm{e}{-1}\!\pm\!3.37\mathrm{e}{-2}\)
& \(0.197\!\pm\!0.043\) \\
& \textsc{Layerwise-DRS}
& \(5.66\mathrm{e}{-4}\!\pm\!1.31\mathrm{e}{-4}\)
& \(1.71\mathrm{e}{-3}\!\pm\!3.05\mathrm{e}{-4}\)
& \(3.56\mathrm{e}{-3}\!\pm\!7.06\mathrm{e}{-4}\)
& \(7.54\mathrm{e}{-3}\!\pm\!1.50\mathrm{e}{-3}\)
& \(0.121\!\pm\!0.043\) \\
& \textsc{RCDRS-Env}
& \(2.38\mathrm{e}{-4}\!\pm\!1.91\mathrm{e}{-6}\)
& \(6.92\mathrm{e}{-4}\!\pm\!3.00\mathrm{e}{-5}\)
& \(5.13\mathrm{e}{-4}\!\pm\!7.69\mathrm{e}{-6}\)
& \(\mathbf{1.42\mathrm{e}{-3}\!\pm\!1.92\mathrm{e}{-4}}\)
& \(0.143\!\pm\!0.059\) \\
& \textsc{RCDRS-NoEnv}
& \(\mathbf{2.33\mathrm{e}{-4}\!\pm\!7.31\mathrm{e}{-6}}\)
& \(\mathbf{6.84\mathrm{e}{-4}\!\pm\!1.48\mathrm{e}{-5}}\)
& \(\mathbf{5.10\mathrm{e}{-4}\!\pm\!2.37\mathrm{e}{-5}}\)
& \(\mathbf{1.42\mathrm{e}{-3}\!\pm\!1.97\mathrm{e}{-4}}\)
& \(0.153\!\pm\!0.049\) \\

\midrule

\multirow{8}{*}{Mixed cone}
& \textsc{F-DRS}
& \(8.64\mathrm{e}{-1}\!\pm\!1.29\mathrm{e}{-2}\)
& \(1.69\mathrm{e}{-3}\!\pm\!1.35\mathrm{e}{-4}\)
& \(1.65\mathrm{e}{-1}\!\pm\!2.35\mathrm{e}{-3}\)
& \(2.43\mathrm{e}{-1}\!\pm\!5.03\mathrm{e}{-2}\)
& \(0.279\!\pm\!0.064\) \\
& \textsc{Grid-DRS}
& \(2.44\mathrm{e}{-1}\!\pm\!5.28\mathrm{e}{-3}\)
& \(2.50\mathrm{e}{-3}\!\pm\!2.08\mathrm{e}{-4}\)
& \(6.33\mathrm{e}{-2}\!\pm\!5.41\mathrm{e}{-4}\)
& \(2.15\mathrm{e}{-1}\!\pm\!8.17\mathrm{e}{-3}\)
& \(0.274\!\pm\!0.066\) \\
& \textsc{S-ADMM}
& \(1.63\mathrm{e}{-1}\!\pm\!3.59\mathrm{e}{-3}\)
& \(3.13\mathrm{e}{-3}\!\pm\!1.94\mathrm{e}{-4}\)
& \(4.81\mathrm{e}{-2}\!\pm\!2.43\mathrm{e}{-4}\)
& \(1.86\mathrm{e}{-1}\!\pm\!7.26\mathrm{e}{-3}\)
& \(0.339\!\pm\!0.088\) \\
& \textsc{AA-DRS}
& \(2.56\mathrm{e}{-1}\!\pm\!4.75\mathrm{e}{-3}\)
& \(1.87\mathrm{e}{-3}\!\pm\!1.03\mathrm{e}{-4}\)
& \(6.15\mathrm{e}{-2}\!\pm\!7.06\mathrm{e}{-4}\)
& \(2.71\mathrm{e}{-1}\!\pm\!2.74\mathrm{e}{-3}\)
& \(1.067\!\pm\!0.283\) \\
& \textsc{L-PDHG}
& \(3.12\mathrm{e}{-1}\!\pm\!4.21\mathrm{e}{-3}\)
& \(4.72\mathrm{e}{-2}\!\pm\!3.39\mathrm{e}{-3}\)
& \(2.88\mathrm{e}{-1}\!\pm\!5.17\mathrm{e}{-3}\)
& \(4.36\mathrm{e}{-1}\!\pm\!5.55\mathrm{e}{-2}\)
& \(0.501\!\pm\!0.104\) \\
& \textsc{Layerwise-DRS}
& \(2.35\mathrm{e}{-1}\!\pm\!9.52\mathrm{e}{-3}\)
& \(1.72\mathrm{e}{-3}\!\pm\!1.38\mathrm{e}{-4}\)
& \(6.25\mathrm{e}{-2}\!\pm\!1.10\mathrm{e}{-3}\)
& \(2.00\mathrm{e}{-1}\!\pm\!1.78\mathrm{e}{-2}\)
& \(0.426\!\pm\!0.086\) \\
& \textsc{RCDRS-Env}
& \(1.67\mathrm{e}{-1}\!\pm\!6.02\mathrm{e}{-3}\)
& \(7.19\mathrm{e}{-4}\!\pm\!5.41\mathrm{e}{-5}\)
& \(\mathbf{7.77\mathrm{e}{-3}\!\pm\!1.63\mathrm{e}{-3}}\)
& \(\mathbf{9.12\mathrm{e}{-2}\!\pm\!6.02\mathrm{e}{-3}}\)
& \(0.456\!\pm\!0.100\) \\
& \textsc{RCDRS-NoEnv}
& \(\mathbf{1.60\mathrm{e}{-1}\!\pm\!4.53\mathrm{e}{-3}}\)
& \(\mathbf{5.99\mathrm{e}{-4}\!\pm\!4.66\mathrm{e}{-5}}\)
& \(9.14\mathrm{e}{-3}\!\pm\!1.40\mathrm{e}{-3}\)
& \(9.55\mathrm{e}{-2}\!\pm\!2.68\mathrm{e}{-3}\)
& \(0.400\!\pm\!0.058\) \\

\bottomrule
\end{tabular}}
\end{table*}

Table~\ref{tab:synthetic_main_k20} shows that residual-controlled DRS
improves fixed-budget conic optimization under both homogeneous and
heterogeneous cone structures. On SOCP, both \textsc{RCDRS} variants
outperform all baselines across the four solution-quality metrics;
\textsc{RCDRS-NoEnv} attains the lowest mean objective, primal, and dual
errors, while the two variants are tied in the rounded gap residual. On
mixed-cone instances, \textsc{RCDRS-NoEnv} gives the lowest mean objective
and primal residuals, whereas \textsc{RCDRS-Env} gives the lowest dual and
gap residuals. This division reflects the finite-budget trade-off between
more aggressive objective progress and the safeguarded primal--dual
trajectory. The controller introduces additional online cost relative to
fixed DRS, but both variants remain faster than \textsc{AA-DRS} and
\textsc{L-PDHG} on these benchmarks.

\subsection{Ablation Studies}
\label{subsec:ablation_experiments}

We isolate the controller, controlled variables, features, and training loss
at \(K=10\). Full ablations are
provided in the supplementary material.

\begin{table*}[!t]
\centering
\scriptsize
\setlength{\tabcolsep}{2.4pt}
\renewcommand{\arraystretch}{0.90}
\caption{Main ablation summary at \(K=10\). Values are
mean\(\pm\)standard deviation over three random seeds.}
\label{tab:main_ablation_summary}
\begin{tabular}{llcccc}
\toprule
Group & Variant
& \multicolumn{2}{c}{SOCP}
& \multicolumn{2}{c}{Mixed cone} \\
\cmidrule(lr){3-4}
\cmidrule(lr){5-6}
& & \(\mathrm{ObjErr}\downarrow\) & \(r_{\rm p}\downarrow\)
& \(\mathrm{ObjErr}\downarrow\) & \(r_{\rm p}\downarrow\) \\
\midrule

\multirow{3}{*}{Controller}
& Layerwise schedule
& \(0.01457\pm0.00033\)
& \(0.01092\pm0.00050\)
& \(0.10324\pm0.00053\)
& \(0.02015\pm0.00105\) \\

& MLP-current
& \(0.00604\pm0.00034\)
& \(0.00554\pm0.00017\)
& \(0.02552\pm0.00153\)
& \(0.01866\pm0.00117\) \\


& GRU full
& \(0.00413\pm0.00033\)
& \(0.00553\pm0.00008\)
& \(0.01564\pm0.00173\)
& \(0.01863\pm0.00121\) \\

\midrule

\multirow{4}{*}{Control}
& Fixed core
& \(0.08661\pm0.00105\)
& \(0.01376\pm0.00088\)
& \(0.11907\pm0.00110\)
& \(0.02338\pm0.00129\) \\

& \(\alpha\)-only
& \(0.02648\pm0.00023\)
& \(0.01303\pm0.00078\)
& \(0.02561\pm0.00167\)
& \(0.01980\pm0.00115\) \\

& \(\beta\)-only
& \(0.00459\pm0.00023\)
& \(0.00601\pm0.00018\)
& \(0.01991\pm0.00091\)
& \(0.01968\pm0.00123\) \\

& \(\alpha+\beta\)
& \(0.00413\pm0.00032\)
& \(0.00553\pm0.00010\)
& \(0.01563\pm0.00074\)
& \(0.01863\pm0.00122\) \\

& Full \(\alpha+\beta+\rho\)
& \(0.00401\pm0.00062\)
& \(0.00417\pm0.00008\)
& \(0.01520\pm0.00167\)
& \(0.01795\pm0.00109\) \\
\midrule

\multirow{4}{*}{Feature}
& Residual-only
& \(0.00525\pm0.00027\)
& \(0.00565\pm0.00014\)
& \(0.02844\pm0.00175\)
& \(0.01882\pm0.00123\) \\

& No time
& \(0.00449\pm0.00022\)
& \(0.00555\pm0.00017\)
& \(0.02547\pm0.00170\)
& \(0.01870\pm0.00126\) \\

& No previous action
& \(0.00417\pm0.00036\)
& \(0.00555\pm0.00004\)
& \(0.02533\pm0.00165\)
& \(0.01864\pm0.00123\) \\

 
& Full
& \(0.00413\pm0.00033\)
& \(0.00553\pm0.00008\)
& \(0.01564\pm0.00167\)
& \(0.01863\pm0.00120\) \\

\midrule

\multirow{4}{*}{Loss}
& Full loss
& \(0.00413\pm0.00033\)
& \(0.00553\pm0.00008\)
& \(0.01564\pm0.00173\)
& \(0.01863\pm0.00121\) \\

& No consistency term
& \(0.00399\pm0.00037\)
& \(0.01001\pm0.00008\)
& \(0.04599\pm0.01341\)
& \(0.02154\pm0.00083\) \\

& No objective term
& \(0.00507\pm0.00034\)
& \(0.00553\pm0.00008\)
& \(0.03514\pm0.00213\)
& \(0.01861\pm0.00121\) \\


& No movement term
& \(0.00513\pm0.00039\)
& \(0.00553\pm0.00008\)
& \(0.02554\pm0.00162\)
& \(0.01862\pm0.00121\) \\

& No smoothness term
& \(0.00492\pm0.00041\)
& \(0.00553\pm0.00009\)
& \(0.02604\pm0.00170\)
& \(0.01863\pm0.00120\) \\

\bottomrule
\end{tabular}
\end{table*}

Table~\ref{tab:main_ablation_summary} shows the results. A layer-wise open-loop schedule is
substantially weaker than the feedback controllers, and the recurrent
controller further improves the mixed-cone objective over the current-state
MLP. The \(\beta\)-controlled objective drive produces most of the objective
improvement, while joint \(\alpha+\beta\) control gives the strongest
cross-family performance. The full feature set is particularly beneficial
on mixed-cone instances: removing time or previous-action information
increases objective error while leaving primal residuals nearly unchanged.
Finally, the full training loss provides the best overall
objective--feasibility balance; removing consistency substantially degrades
feasibility, whereas removing the objective or dominance term mainly
degrades objective quality.

\subsection{Decision-Focused Learning}
\label{subsec:dfl_experiments}

We next embed each solver layer in a DFL pipeline. An upstream model predicts
the cost of a mixed-cone problem, and the fixed-depth layer returns a decision
evaluated under the true downstream cost. We report \(K=20\) here and provide
\(K=5,10,15\) results in the supplementary material.

\begin{table*}[!t]
\centering
\scriptsize
\setlength{\tabcolsep}{2.8pt}
\renewcommand{\arraystretch}{0.96}
\caption{Decision-focused learning results at \(K=20\). Regret and
Dist. evaluate the decision under the true cost, whereas \(r_{\rm p}\),
\(r_{\rm d}\), and \(r_{\rm gap}\) evaluate the predicted optimization
problem. Values are mean\(\pm\)standard deviation over three random seeds.}
\label{tab:dfl_main_k20}
\resizebox{\textwidth}{!}{
\begin{tabular}{lcccccc}
\toprule
Method
& Regret \(\downarrow\)
& \(r_{\rm p}\downarrow\)
& \(r_{\rm d}\downarrow\)
& \(r_{\rm gap}\downarrow\)
& Dist. \(\downarrow\)
& Time (ms) \(\downarrow\) \\
\midrule

\textsc{PTO}
& \(0.1053\pm0.0016\)
& \(2.88\mathrm{e}{-3}\pm1.65\mathrm{e}{-4}\)
& \(3.29\mathrm{e}{-3}\pm6.86\mathrm{e}{-4}\)
& \(1.04\mathrm{e}{-2}\pm5.21\mathrm{e}{-3}\)
& \(0.4751\pm0.0330\)
& \(0.0718\pm0.0066\) \\

\textsc{F-DRS}
& \(0.0511\pm0.0037\)
& \(1.56\mathrm{e}{-3}\pm1.66\mathrm{e}{-4}\)
& \(3.13\mathrm{e}{-3}\pm4.18\mathrm{e}{-4}\)
& \(6.12\mathrm{e}{-3}\pm1.86\mathrm{e}{-3}\)
& \(0.2813\pm0.0281\)
& \(0.0741\pm0.0066\) \\

\textsc{Grid-DRS}
& \(0.0642\pm0.0236\)
& \(1.62\mathrm{e}{-3}\pm1.83\mathrm{e}{-4}\)
& \(3.11\mathrm{e}{-3}\pm1.68\mathrm{e}{-4}\)
& \(7.25\mathrm{e}{-3}\pm5.95\mathrm{e}{-4}\)
& \(0.3062\pm0.0522\)
& \(\mathbf{0.0717\pm0.0098}\) \\

\textsc{AA-DRS}
& \(0.3366\pm0.0630\)
& \(3.30\mathrm{e}{-2}\pm2.43\mathrm{e}{-3}\)
& \(3.45\mathrm{e}{-2}\pm8.95\mathrm{e}{-4}\)
& \(4.35\mathrm{e}{-2}\pm1.08\mathrm{e}{-2}\)
& \(0.5447\pm0.0349\)
& \(0.1637\pm0.0077\) \\

\textsc{L-PDHG}
& \(0.1088\pm0.0099\)
& \(2.67\mathrm{e}{-2}\pm2.37\mathrm{e}{-3}\)
& \(5.28\mathrm{e}{-2}\pm1.08\mathrm{e}{-3}\)
& \(1.40\mathrm{e}{-1}\pm4.91\mathrm{e}{-2}\)
& \(0.4428\pm0.0504\)
& \(0.0757\pm0.0068\) \\

\textsc{RCDRS-Env-F}
& \(0.0501\pm0.0082\)
& \(1.48\mathrm{e}{-3}\pm2.27\mathrm{e}{-4}\)
& \(3.24\mathrm{e}{-3}\pm2.57\mathrm{e}{-4}\)
& \(6.93\mathrm{e}{-3}\pm1.36\mathrm{e}{-3}\)
& \(0.2809\pm0.0352\)
& \(0.1193\pm0.0046\) \\

\textsc{RCDRS-NoEnv-F}
& \(0.0465\pm0.0019\)
& \(1.58\mathrm{e}{-3}\pm1.46\mathrm{e}{-4}\)
& \(4.58\mathrm{e}{-3}\pm1.18\mathrm{e}{-4}\)
& \(1.18\mathrm{e}{-2}\pm7.07\mathrm{e}{-4}\)
& \(0.2616\pm0.0214\)
& \(0.1243\pm0.0070\) \\

\textsc{RCDRS-Env-J}
& \(0.0418\pm0.0017\)
& \(\mathbf{1.30\mathrm{e}{-3}\pm4.16\mathrm{e}{-5}}\)
& \(\mathbf{1.46\mathrm{e}{-3}\pm4.67\mathrm{e}{-5}}\)
& \(\mathbf{3.58\mathrm{e}{-3}\pm1.76\mathrm{e}{-4}}\)
& \(0.2292\pm0.0162\)
& \(0.1261\pm0.0158\) \\

\textsc{RCDRS-NoEnv-J}
& \(\mathbf{0.0406\pm0.0018}\)
& \(1.39\mathrm{e}{-3}\pm5.31\mathrm{e}{-5}\)
& \(1.68\mathrm{e}{-3}\pm2.59\mathrm{e}{-4}\)
& \(4.55\mathrm{e}{-3}\pm1.06\mathrm{e}{-4}\)
& \(\mathbf{0.2274\pm0.0112}\)
& \(0.1112\pm0.0102\) \\

\bottomrule
\end{tabular}}
\end{table*}

Table~\ref{tab:dfl_main_k20} shows that solver-layer quality matters for
end-to-end decision learning. The jointly trained \textsc{RCDRS} variants
give the two lowest mean regret and decision-distance values.
\textsc{RCDRS-NoEnv-J} attains the lowest means for these downstream
metrics, while \textsc{RCDRS-Env-J} gives the lowest primal, dual, and gap
residuals for the predicted optimization problem. Joint fine-tuning also
improves both variants over their frozen-controller counterparts. These
results show that residual-controlled finite-depth solving can improve
decision quality while retaining interpretable primal--dual diagnostics,
at the cost of additional controller evaluation time.

\subsection{Forecast-Aware Energy Dispatch}
\label{subsec:energy_experiments}

Finally, we consider 24-hour dispatch on a six-bus, seven-line network with
two conventional generators, two batteries, and three renewable sites.
Schedules use forecasted load and generation but are evaluated on realized
profiles under network, storage, reserve, curtailment, and load-shedding
constraints. We report positive normalized cost gap, equality residual,
shedding, curtailment, reserve shortfall, emergency shedding, operational
pass rate, and runtime. Pass is the fraction of realized profiles satisfying
all physical acceptance tests. The full formulation and \(K=5,10\) results
are provided in the supplementary material.

\begin{table*}[!t]
\centering
\scriptsize
\setlength{\tabcolsep}{2.2pt}
\renewcommand{\arraystretch}{0.96}
\caption{Forecast-aware energy-dispatch results at \(K=15\). Values are
mean\(\pm\)standard deviation over three random seeds.}
\label{tab:energy_main_k15}
\resizebox{\textwidth}{!}{
\begin{tabular}{lcccccccc}
\toprule
Method
& \(\mathrm{CostGap}\downarrow\)
& Eq. \(\downarrow\)
& Shed \(\downarrow\)
& Curt. \(\downarrow\)
& Reserve \(\downarrow\)
& Emerg. \(\downarrow\)
& Pass \(\uparrow\)
& Time (ms) \(\downarrow\) \\
\midrule

\textsc{F-DRS}
& \(0.074\pm0.002\)
& \(1.37\mathrm{e}{-2}\pm9.9\mathrm{e}{-5}\)
& \(2.94\mathrm{e}{-2}\pm3.3\mathrm{e}{-4}\)
& \(0.821\pm0.003\)
& \(2.12\mathrm{e}{-2}\pm5.4\mathrm{e}{-4}\)
& \(2.59\mathrm{e}{-2}\pm2.4\mathrm{e}{-4}\)
& \(0.996\pm0.002\)
& \(0.031\pm0.015\) \\

\textsc{Grid-DRS}
& \(0.029\pm0.000\)
& \(1.15\mathrm{e}{-2}\pm6.7\mathrm{e}{-5}\)
& \(2.18\mathrm{e}{-2}\pm3.6\mathrm{e}{-5}\)
& \(0.781\pm0.004\)
& \(1.89\mathrm{e}{-2}\pm2.5\mathrm{e}{-4}\)
& \(2.13\mathrm{e}{-2}\pm1.7\mathrm{e}{-5}\)
& \(\mathbf{1.000\pm0.000}\)
& \(\mathbf{0.028\pm0.005}\) \\

\textsc{S-ADMM}
& \(0.139\pm0.002\)
& \(1.43\mathrm{e}{-2}\pm1.3\mathrm{e}{-4}\)
& \(3.35\mathrm{e}{-2}\pm2.6\mathrm{e}{-4}\)
& \(0.842\pm0.003\)
& \(2.46\mathrm{e}{-2}\pm5.4\mathrm{e}{-4}\)
& \(2.50\mathrm{e}{-2}\pm2.1\mathrm{e}{-4}\)
& \(0.989\pm0.006\)
& \(0.041\pm0.005\) \\

\textsc{AA-DRS}
& \(0.029\pm0.000\)
& \(1.20\mathrm{e}{-2}\pm8.3\mathrm{e}{-5}\)
& \(2.22\mathrm{e}{-2}\pm6.8\mathrm{e}{-5}\)
& \(0.782\pm0.004\)
& \(1.82\mathrm{e}{-2}\pm2.1\mathrm{e}{-4}\)
& \(2.18\mathrm{e}{-2}\pm5.1\mathrm{e}{-5}\)
& \(\mathbf{1.000\pm0.000}\)
& \(0.064\pm0.003\) \\

\textsc{RCDRS-Env}
& \(\mathbf{8.6\mathrm{e}{-3}\pm6.0\mathrm{e}{-4}}\)
& \(1.05\mathrm{e}{-2}\pm6.2\mathrm{e}{-5}\)
& \(\mathbf{5.30\mathrm{e}{-3}\pm1.8\mathrm{e}{-4}}\)
& \(\mathbf{0.685\pm0.003}\)
& \(\mathbf{2.00\mathrm{e}{-3}\pm1.3\mathrm{e}{-4}}\)
& \(\mathbf{5.29\mathrm{e}{-3}\pm1.7\mathrm{e}{-4}}\)
& \(\mathbf{1.000\pm0.000}\)
& \(0.074\pm0.015\) \\

\textsc{RCDRS-NoEnv}
& \(1.05\mathrm{e}{-2}\pm3.0\mathrm{e}{-4}\)
& \(\mathbf{1.02\mathrm{e}{-2}\pm5.2\mathrm{e}{-5}}\)
& \(5.76\mathrm{e}{-3}\pm6.0\mathrm{e}{-5}\)
& \(0.695\pm0.001\)
& \(2.55\mathrm{e}{-3}\pm1.9\mathrm{e}{-4}\)
& \(5.75\mathrm{e}{-3}\pm6.8\mathrm{e}{-5}\)
& \(\mathbf{1.000\pm0.000}\)
& \(0.072\pm0.005\) \\

\bottomrule
\end{tabular}}
\end{table*}

Table~\ref{tab:energy_main_k15} shows that residual feedback improves
operational decisions under forecast uncertainty and network constraints.
\textsc{RCDRS-Env} achieves the best operating-cost gap, load-shedding
rate, curtailment rate, reserve-shortfall rate, and emergency-shedding rate,
while \textsc{RCDRS-NoEnv} gives the lowest equality residual.
\textsc{Grid-DRS}, \textsc{AA-DRS}, and both \textsc{RCDRS} variants
reach a perfect pass rate; the contribution of residual feedback is therefore
the reduction in cost and violation magnitudes while preserving operational
acceptability. This improvement requires additional computation---approximately
\(0.072\)--\(0.074\) milliseconds per instance versus \(0.028\) milliseconds for
\textsc{Grid-DRS}.

Overall, the experiments support three conclusions. First, \textsc{RCDRS}
improves fixed-depth conic optimization under terminal primal--dual
diagnostics, with moderate controller overhead. Second, closed-loop
trajectory feedback is more effective than fixed and open-loop schedules;
the objective-drive variable accounts for most of the gain, while joint
relaxation and objective-drive control gives the most robust configuration.
Third, the resulting layer transfers to downstream tasks: joint training
reduces regret and decision distance in DFL, while residual feedback reduces
cost and violation magnitudes in forecast-aware dispatch.

\section{Conclusion and Limitation}
\label{sec:conclusion}

This paper presented \textsc{RCDRS}, a fixed-depth differentiable conic solver layer that combines $\beta$-parameterized DRS with residual-based feedback control to improve finite-budget decision quality. The current theory covers fixed admissible blocks and safeguarded time-varying rollouts. The controller is trained for a prescribed depth and problem distribution, so cross-depth generalization and out-of-distribution robustness remain important open issues. The implementation also relies on projection-computable cones and efficient affine projections, which may limit scalability for very large sparse systems. Future work will focus on stronger robustness guarantees, more scalable projection implementations, and adaptive deployment across broader problem classes.

\bibliography{aaai2027}

\appendix
\onecolumn

\section{Self-Contained Problem Setup}
\label{sec:supp_setup}

We consider the conic linear program (CLP)
\begin{equation}
\label{eq:clp}
    \min_{x\in\mathbb R^n} c^\top x
    ~~
    \mathrm{s.t.}~~
    Ax=b,~~ x\in\mathcal K,
\end{equation}
where \(A\in\mathbb R^{m\times n}\), \(b\in\mathbb R^m\),
\(c\in\mathbb R^n\), and \(\mathcal K\subseteq\mathbb R^n\) is a
Cartesian product of projection-computable closed convex cones. We write
\(\mathcal A:=\{x\in\mathbb R^n:Ax=b\}\),
\(d:=(A,b,c,\mathcal K)\), and use \(\mathbb I_{\mathcal C}\) for the
indicator of a closed convex set \(\mathcal C\). The affine projection is
\(\Proj_{\mathcal A}(v)
=v-A^\top(AA^\top)^\dagger(Av-b)\), with
\((AA^\top)^\dagger\) the Moore--Penrose inverse; the cone projection
\(\Proj_{\mathcal K}\) is evaluated blockwise. For a cone
\(\mathcal K\), its dual is
\(\mathcal K^\ast:=\{s:\langle s,x\rangle\ge0
\ \forall x\in\mathcal K\}\).

Let \(y^k=(x^k,z^k,u^k)\) be the scaled splitting state and define
\(\bar c:=c/(\|c\|_2+\varepsilon_c)\), where
\(\varepsilon_c>0\). Given an admissible action
\(\vartheta_k=(\alpha_k,\beta_k)\), with
\(\alpha_k\in(0,2)\) and \(\beta_k>0\), one controlled transition is
\begin{equation}
\label{eq:rc_drs_block}
\begin{aligned}
    w^k&=z^k-u^k-\beta_k\bar c,&
    x^{k+1}&=\Proj_{\mathcal A}(w^k),\\
    \bar x^{k+1}
    &=\alpha_kx^{k+1}+(1-\alpha_k)z^k,&
    z^{k+1}&=\Proj_{\mathcal K}(\bar x^{k+1}+u^k),\\
    u^{k+1}&=u^k+\bar x^{k+1}-z^{k+1}.&&
\end{aligned}
\end{equation}
A \(K\)-step layer returns \(z^K\), which belongs to
\(\mathcal K\) up to numerical precision. The causal controller observes
the feature history \(\phi^{0:k}\) and produces
\(\omega_k=(\rho_k,\alpha_k,\beta_k)=\pi_\theta(\phi^{0:k})\).
Only \((\alpha_k,\beta_k)\) enter~\eqref{eq:rc_drs_block};
\(\rho_k>0\) is controller-side scale memory. For the operator analysis we
use the shadow state \(v^k:=z^k+u^k\), initialized so that
\(z^0=\Proj_{\mathcal K}(v^0)\); the standard initialization
\(z^0=u^0=0\) satisfies this relation.

\section{Methodological Details}
\label{app:method_details}

This section gives the complete implementation-level specification of
\textsc{RCDRS}, including the controller features, admissible action
parameterization, safeguarded envelope, terminal primal--dual diagnostics,
and training losses.

\subsection{Controller feature construction}
\label{app:controller_details}

The controller input at iteration \(k\) is constructed from the current
splitting state \(y^k=(x^k,z^k,u^k)\), the previous primal candidate
\(z^{k-1}\), the problem data \(d=(A,b,c,\mathcal K)\), the previous control
vector \(\omega_{k-1}\), and the normalized time index. We use the convention
\(z^{-1}=z^0\) and initialize the previous action by the base control
\(\omega_{-1}=(\rho_0,\alpha_0,\beta_0)\).

The residual components are
\begin{small}
\[
\begin{aligned}
    \eta_{\rm con}^k
    :=
    \frac{\|x^k-z^k\|_2}{1+\|z^k\|_2},
    \eta_{\rm eq}^k
    :=
    \frac{\|Az^k-b\|_2}{1+\|b\|_2},
    \eta_{\rm dz}^k
    :=
    \frac{\|z^k-z^{k-1}\|_2}{1+\|z^k\|_2},\\
    \eta_{\rm obj}^k
    :=
    \frac{|c^\top z^k|}{1+\|c\|_2\|z^k\|_2},
    \Delta_{\rm obj}^k
    :=
    \operatorname{asinh}
    \left(
    \frac{c^\top z^k-c^\top z^{k-1}}
    {1+|c^\top z^{k-1}|}
    \right).
\end{aligned}
\]
\end{small}
Here \(\eta_{\rm con}^k\) measures consensus or splitting consistency
between the affine-side and cone-side variables.
The quantity \(\eta_{\rm eq}^k\) measures affine feasibility,
\(\eta_{\rm dz}^k\) measures terminal movement of the primal candidate,
\(\eta_{\rm obj}^k\) measures objective scale, and
\(\Delta_{\rm obj}^k\) measures signed objective progress. The
normalizations make the same controller usable across instances with
different problem scales.

The previous-action features are
\(\xi_{\rho}^{k-1}:=\log(\rho_{k-1}/\rho_0)\),
\(\xi_{\alpha}^{k-1}:=\alpha_{k-1}\), and
\(\xi_{\beta}^{k-1}:=\log(\beta_{k-1}/\beta_0)\). The logarithmic encoding
is used for positive scale coordinates. The time-budget features are
\(t_k=k/K\) and \(\bar t_k=(K-k)/K\). The complete feature vector is
\(
    \phi^k =
    \Big[
    \log(1+\eta_{\rm con}^k),
    \log(1+\eta_{\rm eq}^k),
    \log(1+\eta_{\rm dz}^k),
    \log(1+\eta_{\rm obj}^k),
    \Delta_{\rm obj}^k,
    \xi_{\rho}^{k-1},
    \xi_{\alpha}^{k-1},
    \xi_{\beta}^{k-1},
    t_k,
    \bar t_k
    \Big].
\)
The transform \(\log(1+\cdot)\) compresses nonnegative residual magnitudes,
which may vary by orders of magnitude during a rollout. The transform
\(\operatorname{asinh}\) compresses objective progress while preserving its
sign. Cone feasibility is not included in \(\phi^k\), because
\(z^k\in\mathcal K\) is enforced by the cone projection in~\eqref{eq:rc_drs_block}.

\subsection{Admissible action parameterization}
\label{app:action_parameterization}

The causal controller is implemented as
\(h^k=\mathcal E_\theta(\phi^0,\ldots,\phi^k)\) and
\(r^k=g_\theta(h^k)\), where \(h^k\) is the hidden representation and
\(r^k=(r_\rho^k,r_\alpha^k,r_\beta^k)\) are raw control coordinates. In the
experiments, \(\mathcal E_\theta\) is a lightweight GRU. The recurrent
structure is used because the next useful action depends on trajectory
trends: similar residual magnitudes may correspond to improving,
oscillatory, or stagnant behavior.

The raw coordinates are mapped to admissible ranges by
\[
\begin{aligned}
    \log\rho_k
    &=
    \log\rho_{\min}
    +
    \sigma(r_\rho^k)
    \bigl(\log\rho_{\max}-\log\rho_{\min}\bigr),
    \\
    \alpha_k
    &=
    \alpha_{\min}
    +
    \sigma(r_\alpha^k)
    \bigl(\alpha_{\max}-\alpha_{\min}\bigr),
    \\
    \log\beta_k
    &=
    \log\beta_{\min}
    +
    \sigma(r_\beta^k)
    \bigl(\log\beta_{\max}-\log\beta_{\min}\bigr),
\end{aligned}
\]
where \(\sigma(\cdot)\) is the sigmoid function,
\(0<\rho_{\min}<\rho_{\max}\),
\(0<\alpha_{\min}<\alpha_{\max}<2\), and
\(0<\beta_{\min}<\beta_{\max}\). This gives the control vector
\(\omega_k=(\rho_k,\alpha_k,\beta_k)\), with
\(\rho_k>0\), \(\alpha_k\in(0,2)\), and \(\beta_k>0\) by construction.

The positive scale coordinates can be optionally filtered to limit abrupt
multiplicative changes. Let
\((\tilde\rho_k,\tilde\alpha_k,\tilde\beta_k)\) denote the outputs of the
bounded map above. The filtered scales are obtained by
\[
\begin{aligned}
    \rho_k
    &=
    \operatorname{clip}
    \left(
    \tilde\rho_k,
    \rho_{k-1}/\chi_\rho,
    \chi_\rho\rho_{k-1}
    \right),
    \\
    \beta_k
    &=
    \operatorname{clip}
    \left(
    \tilde\beta_k,
    \beta_{k-1}/\chi_\beta,
    \chi_\beta\beta_{k-1}
    \right),
\end{aligned}
\]
where \(\chi_\rho\ge1\) and \(\chi_\beta\ge1\). The clipped values are also
intersected with the global admissible ranges
\([\rho_{\min},\rho_{\max}]\) and \([\beta_{\min},\beta_{\max}]\). The
relaxation coordinate \(\alpha_k\) is already bounded in \((0,2)\) by the
admissible map.

For the safeguarded rollouts used in the time-varying analysis, the same
controller can be parameterized around a fixed admissible base control
\(\bar\omega=(\bar\rho,\bar\alpha,\bar\beta)\). Let
\(q_{\rho,k}=\tanh(r_\rho^k)\), \(q_{\alpha,k}=\tanh(r_\alpha^k)\), and
\(q_{\beta,k}=\tanh(r_\beta^k)\). The envelope form is
\begin{equation}
\label{eq:envelope_param}
\begin{aligned}
    \log\rho_k
    &=
    \log\bar\rho+\delta_k q_{\rho,k},
    ~~
    \alpha_k
    =
    \bar\alpha+s_\alpha\delta_k q_{\alpha,k},\\
    \log\beta_k
    &=
    \log\bar\beta+\delta_k q_{\beta,k}.
\end{aligned}
\end{equation}
Here \(s_\alpha>0\), and the envelope radius is
\[
    \delta_k=\frac{\delta_0}{1+(k/k_0)^p},
    ~~
    \delta_0>0,~~ k_0>0,~~ p>1 .
\]
The parameters are chosen so that all generated actions remain admissible;
in particular,
\[
    0<\bar\alpha-s_\alpha\delta_0
    \le
    \bar\alpha+s_\alpha\delta_0
    <2.
\]
The logarithmic parameterization keeps \(\rho_k>0\) and \(\beta_k>0\). The
condition \(p>1\) makes \(\sum_k\delta_k<\infty\), which is used in the
perturbed averaged-operator analysis. The envelope preserves finite-horizon
adaptability while making \((\alpha_k,\beta_k)\) approach the fixed
admissible pair \((\bar\alpha,\bar\beta)\) along long rollouts. This is the
form used in the time-varying analysis in Section~\ref{app:theory_proofs}.

\subsection{Terminal primal--dual diagnostics}
\label{app:terminal_diagnostics}

This diagnostic readout is stated for the conic setting, where
\(\mathcal K\) is a closed convex cone and \(\mathcal K^\ast\) denotes its
dual cone. For a \(K\)-step rollout, the terminal primal candidate is
\(z^K\). Since the last transition uses \(\beta_{K-1}\), define
\[
    \kappa_{K-1}
    :=
    \frac{\beta_{K-1}}{\|c\|_2+\varepsilon_c}.
\]
The cone projection residual in~\eqref{eq:rc_drs_block} provides
a cone-side normal vector. We use the rescaled vector
\(s^K:=-u^K/\kappa_{K-1}\) as a cone-dual-feasible diagnostic slack
for the original CLP. At finite depth, stationarity is assessed separately
through the least-squares equality multiplier and the residual \(r_{\rm d}^K\). The equality multiplier is
recovered by the least-squares stationarity fit
\[
    \lambda^K
    :=
    \arg\min_{\lambda}
    \|A^\top\lambda+s^K-c\|_2^2
    =
    (AA^\top)^\dagger A(c-s^K).
\]
Thus the terminal state yields the primal--dual triplet
\[
    \operatorname{RCDRS}_\theta^K(d)
    =
    (z^K,\lambda^K,s^K).
\]
In downstream tasks, the primal decision is \(z^K\), while
\((\lambda^K,s^K)\) are used as diagnostic readouts.

We evaluate finite-depth optimality through the normalized KKT-style
diagnostics
\[
\begin{aligned}
    r_{\rm p}^K
    &:=
    \frac{\|Az^K-b\|_2}{1+\|b\|_2},
    &
    r_{\rm d}^K
    &:=
    \frac{\|A^\top\lambda^K+s^K-c\|_2}{1+\|c\|_2},
    \\
    r_{\rm comp}^K
    &:=
    \frac{|\langle z^K,s^K\rangle|}
    {1+|c^\top z^K|},
    &
    r_{\rm gap}^K
    &:=
    \frac{|c^\top z^K-b^\top\lambda^K|}
    {1+|c^\top z^K|+|b^\top\lambda^K|}.
\end{aligned}
\]
Here \(r_{\rm p}^K\) measures primal affine feasibility,
\(r_{\rm d}^K\) measures dual stationarity,
\(r_{\rm comp}^K\) measures complementarity, and
\(r_{\rm gap}^K\) measures the primal--dual gap.

\subsection{Terminal-oriented training objective}
\label{app:training_details}

The training objective compares the learned rollout with a fixed-core
reference rollout at the same depth \(K\). The reference layer uses the base
control \(\omega_0=(\rho_0,\alpha_0,\beta_0)\) and the transition
\(F_{\alpha_0,\beta_0}\). Let
\(y_{\rm rc}^K=(x_{\rm rc}^K,z_{\rm rc}^K,u_{\rm rc}^K)\) be the terminal
state of the learned layer, and let
\(y_{\rm base}^K=(x_{\rm base}^K,z_{\rm base}^K,u_{\rm base}^K)\) be the
terminal state of the fixed-core reference layer.

The terminal residual losses are
\[
    \mathcal L_{\rm eq}
    :=
    \frac{\|Az_{\rm rc}^K-b\|_2^2}{(1+\|b\|_2)^2},
    ~~
    \mathcal L_{\rm con}
    :=
    \frac{\|x_{\rm rc}^K-z_{\rm rc}^K\|_2^2}
    {(1+\|z_{\rm rc}^K\|_2)^2},
\]
and the terminal movement loss is
\[
    \mathcal L_{\rm mov}
    :=
    \frac{\|z_{\rm rc}^K-z_{\rm rc}^{K-1}\|_2^2}
    {(1+\|z_{\rm rc}^K\|_2)^2}.
\]
Here \(\mathcal L_{\rm con}\) measures consensus or splitting consistency,
rather than cone infeasibility. The baseline-relative objective penalty is
\[
    \mathcal L_{\rm obj}
    :=
    \left[
    \frac{
    c^\top z_{\rm rc}^K-c^\top z_{\rm base}^K
    }
    {1+|c^\top z_{\rm base}^K|}
    \right]_+ .
\]
For the minimization problem in \eqref{eq:clp}, this term is positive only
when the learned terminal objective is larger than the fixed-core reference
objective under the same rollout depth.

The terminal merit of the learned rollout is
\[
    \mathcal M_{\rm rc}
    =
    \lambda_{\rm eq}\mathcal L_{\rm eq}
    +
    \lambda_{\rm con}\mathcal L_{\rm con}
    +
    \lambda_{\rm mov}\mathcal L_{\rm mov}
    +
    \lambda_{\rm obj}\mathcal L_{\rm obj},
\]
where all weights are nonnegative. We also define the reference merit
\[
    \mathcal M_{\rm base}
    =
    \lambda_{\rm eq}\mathcal L_{\rm eq}^{\rm base}
    +
    \lambda_{\rm con}\mathcal L_{\rm con}^{\rm base}
    +
    \lambda_{\rm mov}\mathcal L_{\rm mov}^{\rm base}.
\]
The base losses \(\mathcal L_{\rm eq}^{\rm base}\),
\(\mathcal L_{\rm con}^{\rm base}\), and
\(\mathcal L_{\rm mov}^{\rm base}\) are defined by the same formulas with
\(y_{\rm rc}^K\) replaced by \(y_{\rm base}^K\). The objective term is
excluded from \(\mathcal M_{\rm base}\), because \(\mathcal L_{\rm obj}\)
is already defined relative to the fixed-core reference. The optional
finite-budget dominance term is
\[
    \mathcal L_{\rm dom}
    =
    [\mathcal M_{\rm rc}-\mathcal M_{\rm base}+m]_+,
    ~~
    m\ge0.
\]
The margin \(m\) can be set to zero for non-worsening dominance or to a
small positive value to encourage strict finite-budget improvement.

The action sequence is regularized by
\[
    \mathcal L_{\rm smooth}
    =
    \sum_{k=1}^{K-1}
    [
    (\log\rho_k-\log\rho_{k-1})^2
    +(\alpha_k-\alpha_{k-1})^2
    +(\log\beta_k-\log\beta_{k-1})^2
    ].
\]
The total self-supervised training objective is
\[
    \min_\theta~
    \mathbb E_{d\sim\mathcal D}
    \left[
        \mathcal M_{\rm rc}
        +
        \lambda_{\rm dom}\mathcal L_{\rm dom}
        +
        \lambda_{\rm smooth}\mathcal L_{\rm smooth}
    \right].
\]
No reference optimizer solution is required during training; the fixed-core
rollout provides the baseline for the objective comparison, while the
residual terms directly penalize finite-depth feasibility and stability.


\section{Proofs and Auxiliary Results}
\label{app:theory_proofs}

This section states and proves the operator-theoretic results used for
\textsc{RCDRS}. It first establishes the fixed-block interpretation of the
\(\beta\)-parameterized transition, then derives terminal residual bounds,
analyzes safeguarded time-varying rollouts, and finally establishes the
terminal primal--dual diagnostics and additional finite-depth quality
results.

Throughout this section, unless otherwise stated, we assume that
\(\mathcal A\) and \(\mathcal K\) are nonempty closed convex sets, so that
the projection operators are well defined. We
also assume that the CLP has a nonempty solution set whenever an optimality
statement is made. A fixed pair \((\alpha,\beta)\) is called admissible if
\(\alpha\in(0,2)\) and \(\beta>0\).

The section is organized around the self-contained statements below.
The first group of results shows that one fixed \((\alpha,\beta)\) block is
not a new algorithmic object: it is exactly a relaxed DRS operator applied
to a positively scaled but solution-equivalent CLP. The second group uses
this averaged-operator interpretation to obtain finite-step residual bounds.
The third group extends the argument to safeguarded time-varying rollouts by
viewing the learned rollout as a summable perturbation of a limiting averaged
operator. The fourth group shows that the terminal scaled-dual state yields
a valid conic primal--dual readout. The final group gives auxiliary facts
explaining how the terminal training losses are connected to finite-depth
solution quality.

\subsection{Statements of the Theoretical Results}
\label{app:restated_theory}

For fixed \(\beta>0\), define
\(f_\beta(x):=\beta\bar c^\top x+\mathbb I_{\mathcal A}(x)\) and
\(g(x):=\mathbb I_{\mathcal K}(x)\). Let
\(J_{f_\beta}:=(I+\partial f_\beta)^{-1}\),
\(J_g:=(I+\partial g)^{-1}\),
\(R_{f_\beta}:=2J_{f_\beta}-I\), and \(R_g:=2J_g-I\). The DRS operator
and its relaxed version are
\[
    T_\beta:=\tfrac12(I+R_{f_\beta}R_g),
    ~
    T_{\alpha,\beta}:=(1-\alpha)I+\alpha T_\beta
    =I+\alpha(J_{f_\beta}R_g-J_g).
\]

\begin{proposition}[Fixed-block interpretation]
\label{prop:fixed_block_interpretation}
For any \(\beta>0\) and \(\alpha\in(0,2)\), the following statements
hold.
\begin{enumerate}[leftmargin=*,nosep]
    \item The scaled and original programs have the same primal solution
    set:
    \[
        \argmin_{x\in\mathcal A\cap\mathcal K}
        \beta\bar c^\top x
        =
        \argmin_{x\in\mathcal A\cap\mathcal K}c^\top x.
    \]
    \item If \(v^k=z^k+u^k\) and \(z^k=J_g(v^k)\), then one fixed
    \((\alpha,\beta)\) step of~\eqref{eq:rc_drs_block} satisfies
    \(v^{k+1}:=z^{k+1}+u^{k+1}=T_{\alpha,\beta}v^k\) and
    \(z^{k+1}=J_g(v^{k+1})\).
    \item \(T_{\alpha,\beta}\) is \(\alpha/2\)-averaged and
    nonexpansive.
\end{enumerate}
\end{proposition}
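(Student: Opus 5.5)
The plan is to prove the three items by direct computation. Almost everything reduces to identifying the two resolvents $J_{f_\beta}$ and $J_g$ with the projections used in~\eqref{eq:rc_drs_block}.

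\emph{Item 1.} On $\mathcal A\cap\mathcal K$ we have $\beta\bar c^\top x=\kappa\, c^\top x$ with $\kappa:=\beta/(\|c\|_2+\varepsilon_c)>0$. Multiplying an objective by a positive constant preserves the ordering of objective values on the common feasible set. So a point $x^\star$ satisfies $\kappa c^\top x^\star\le \kappa c^\top x$ for all feasible $x$ if and only if $c^\top x^\star\le c^\top x$ for all feasible $x$. This gives equality of the argmin sets, including the degenerate cases where both are empty. If $c=0$, both objectives vanish identically and both argmins equal $\mathcal A\cap\mathcal K$.

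\emph{Item 2.} First I will compute the resolvents. Since $g=\mathbb I_{\mathcal K}$, we have $J_g=\Proj_{\mathcal K}$. For $f_\beta$, $J_{f_\beta}(w)=\arg\min_x\{\beta\bar c^\top x+\mathbb I_{\mathcal A}(x)+\tfrac12\|x-w\|_2^2\}$. Completing the square, $\beta\bar c^\top x+\tfrac12\|x-w\|^2=\tfrac12\|x-(w-\beta\bar c)\|^2+\text{const}$, so $J_{f_\beta}(w)=\Proj_{\mathcal A}(w-\beta\bar c)$. Here $f_\beta$ is proper, lsc and convex because $\mathcal A$ is a nonempty closed affine set, so $(I+\partial f_\beta)^{-1}$ coincides with this prox.

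Now assume $v^k=z^k+u^k$ and $z^k=\Proj_{\mathcal K}(v^k)$. Then $R_g v^k=2z^k-v^k=z^k-u^k$, and hence $J_{f_\beta}R_g v^k=\Proj_{\mathcal A}(z^k-u^k-\beta\bar c)=x^{k+1}$. This gives
\[
T_{\alpha,\beta}v^k=v^k+\alpha\bigl(x^{k+1}-z^k\bigr).
\]
On the other side, the update rules give $z^{k+1}+u^{k+1}=u^k+\bar x^{k+1}=u^k+\alpha x^{k+1}+(1-\alpha)z^k=v^k+\alpha(x^{k+1}-z^k)$, so the two expressions agree. Finally, $z^{k+1}=\Proj_{\mathcal K}(\bar x^{k+1}+u^k)=\Proj_{\mathcal K}(v^{k+1})=J_g(v^{k+1})$, since $\bar x^{k+1}+u^k=v^{k+1}$. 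This closes the invariant, and the initialization $z^0=u^0=0$ satisfies it.

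\emph{Item 3.} Resolvents of maximal monotone operators, namely subdifferentials of proper lsc convex functions, are firmly nonexpansive. Therefore $R_{f_\beta}$ and $R_g$ are nonexpansive, and so is their composition. It follows that $T_\beta=\tfrac12(I+R_{f_\beta}R_g)$ is $\tfrac12$-averaged. Writing $T_{\alpha,\beta}=(1-\alpha)I+\alpha T_\beta=(1-\tfrac{\alpha}{2})I+\tfrac{\alpha}{2}N$ with $N:=R_{f_\beta}R_g$ nonexpansive shows that $T_{\alpha,\beta}$ is $\alpha/2$-averaged for $\alpha\in(0,2)$. In particular, $T_{\alpha,\beta}$ is nonexpansive. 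I would also verify the identity $(1-\alpha)I+\alpha T_\beta=I+\alpha(J_{f_\beta}R_g-J_g)$ by expanding $R_{f_\beta}R_g=2J_{f_\beta}R_g-R_g$ and $R_g=2J_g-I$.

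The main obstacle is bookkeeping rather than depth. The key step is the completing-the-square identification $J_{f_\beta}(w)=\Proj_{\mathcal A}(w-\beta\bar c)$, together with the check that the shadow-state invariant $z^k=J_g(v^k)$ propagates exactly. Everything else is standard averaged-operator calculus.
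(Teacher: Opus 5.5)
Your proposal is correct and matches the paper's proof essentially step for step: positive rescaling $\beta\bar c=\kappa c$ for item~1; the completing-the-square identification $J_{f_\beta}(w)=\Proj_{\mathcal A}(w-\beta\bar c)$, together with the shadow-state computation $z^{k+1}+u^{k+1}=u^k+\bar x^{k+1}=v^k+\alpha(J_{f_\beta}R_gv^k-J_gv^k)$, for item~2; and the representation $T_{\alpha,\beta}=(1-\tfrac{\alpha}{2})I+\tfrac{\alpha}{2}R_{f_\beta}R_g$ for item~3. Your separate treatment of $c=0$ is harmless but unnecessary, since $\beta\bar c=\kappa c$ holds for every $c$ because $\varepsilon_c>0$.
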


\begin{proposition}[Terminal residual bound for a fixed block]
\label{prop:finite_step_fixed_residual}
Let \(\alpha\in(0,2)\), \(\beta>0\), and
\(\Fix(T_{\alpha,\beta})\neq\emptyset\). If
\(v^{k+1}=T_{\alpha,\beta}v^k\), then for every \(N\ge1\),
\[
    \|v^N-T_{\alpha,\beta}v^N\|_2^2
    \le
    \frac{\alpha}{(2-\alpha)N}
    \dist^2\!\bigl(v^0,\Fix(T_{\alpha,\beta})\bigr).
\]
\end{proposition}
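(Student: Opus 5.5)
The plan is to use only the fact from Proposition~\ref{prop:fixed_block_interpretation}(3) that \(T:=T_{\alpha,\beta}\) is \(\theta\)-averaged with \(\theta=\alpha/2\in(0,1)\). The bound then follows from two standard ingredients: a Fejér-type descent inequality that controls the sum of squared residuals, and the monotonicity of the residual along the orbit, which converts a bound on the sum into a bound on the last term.

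\textbf{Step 1 (descent inequality).} Write \(T=(1-\theta)I+\theta S\) with \(S\) nonexpansive, and note \(\Fix(T)=\Fix(S)\). Fix \(v^\star\in\Fix(T)\). The standard averaged-operator identity gives, for every \(v\),
\[
\|Tv-v^\star\|_2^2\le\|v-v^\star\|_2^2-\frac{1-\theta}{\theta}\|v-Tv\|_2^2 .
\]
This comes from expanding \(\|(1-\theta)(v-v^\star)+\theta(Sv-v^\star)\|_2^2\) with the convex-combination identity and using \(\|Sv-v^\star\|_2\le\|v-v^\star\|_2\) together with \(v-Tv=\theta(v-Sv)\). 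With \(\theta=\alpha/2\), the coefficient is \((1-\theta)/\theta=(2-\alpha)/\alpha\). Apply the inequality at \(v=v^k\) and telescope over \(k=0,\dots,N-1\):
\[
\frac{2-\alpha}{\alpha}\sum_{k=0}^{N-1}\|v^k-Tv^k\|_2^2\le\|v^0-v^\star\|_2^2 .
\]
Since \(\Fix(T)\) is closed and convex, we may take \(v^\star\) to be the projection of \(v^0\) onto \(\Fix(T)\). The right-hand side then becomes \(\dist^2(v^0,\Fix(T))\).

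\textbf{Step 2 (monotone residual).} Nonexpansiveness of \(T\) gives
\[
\|v^{k+1}-Tv^{k+1}\|_2=\|Tv^k-Tv^{k+1}\|_2\le\|v^k-v^{k+1}\|_2=\|v^k-Tv^k\|_2 ,
\]
so the residuals are nonincreasing. Hence the \(N\) terms of the sum are each at least \(\|v^{N-1}-Tv^{N-1}\|_2^2\), and this quantity is at least \(\|v^N-Tv^N\|_2^2\). Therefore
\[
N\|v^N-Tv^N\|_2^2\le\sum_{k=0}^{N-1}\|v^k-Tv^k\|_2^2\le\frac{\alpha}{2-\alpha}\dist^2(v^0,\Fix(T)),
\]
which is exactly the claimed bound. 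Using the index \(N-1\) in the comparison actually gives a marginally stronger statement, which we do not need.

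The main obstacle is getting the constant right in the descent inequality for an \(\alpha/2\)-averaged operator. This requires checking the averagedness convention: \(T=(1-\theta)I+\theta S\), with \(S=R_{f_\beta}R_g\) nonexpansive as a composition of reflected resolvents of convex functions. That convention is presumably the one used in Proposition~\ref{prop:fixed_block_interpretation}(3). The remaining steps are routine.
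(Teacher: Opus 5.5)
Your proposal is correct and follows essentially the same argument as the paper: the averaged-operator descent inequality with constant \((2-\alpha)/\alpha\), telescoping over \(k=0,\dots,N-1\), and monotonicity of the residual under the nonexpansive \(T_{\alpha,\beta}\). The only difference is cosmetic. You take \(v^\star\) to be the projection of \(v^0\) onto \(\Fix(T_{\alpha,\beta})\), whereas the paper takes the infimum over \(v^\star\in\Fix(T_{\alpha,\beta})\) at the end, which does not need \(\Fix(T_{\alpha,\beta})\) to be closed and convex.
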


\begin{theorem}[Residual bounds under the safeguarded envelope]
\label{thm:finite_step_envelope}
Let \(T_k:=T_{\alpha_k,\beta_k}\),
\(T_\infty:=T_{\bar\alpha,\bar\beta}\), and
\(v^{k+1}=T_kv^k\). Suppose that
\(\Fix(T_\infty)\neq\emptyset\), the trajectory \(\{v^k\}\) is bounded,
and the envelope in~\eqref{eq:envelope_param} holds. Then there exist
summable nonnegative sequences \(\{\epsilon_k\}\) and \(\{\xi_k\}\) such
that \(\|T_kv^k-T_\infty v^k\|_2\le\epsilon_k\). For any
\(v^\star\in\Fix(T_\infty)\), define
\(c_\infty:=(2-\bar\alpha)/\bar\alpha\) and
\(D_\infty:=\|v^0-v^\star\|_2^2+\sum_{k=0}^{\infty}\xi_k\). Then, for
every \(N\ge1\),
\[
    \min_{0\le k<N}\|v^k-T_\infty v^k\|_2^2
    \le \frac{D_\infty}{c_\infty N},~~
    \|v^N-T_\infty v^N\|_2
    \le
    \sqrt{\frac{2D_\infty}{c_\infty N}}
    +2\sum_{k=\lfloor N/2\rfloor}^{\infty}\epsilon_k .
\]
Moreover, \(\epsilon_k=O(\delta_k)\). If
\(\delta_k=\delta_0/(1+(k/k_0)^p)\) with \(p>1\), the terminal residual
is \(O(N^{-1/2}+N^{1-p})\); its square is \(O(1/N)\) when
\(p\ge3/2\).
\end{theorem}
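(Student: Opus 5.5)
The argument has three parts: a perturbation bound, a descent inequality for the perturbed iteration, and a terminal bound.

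\emph{Perturbation bound.} Since \(T_{\alpha,\beta}=I+\alpha(J_{f_\beta}R_g-J_g)\), we write
\[
T_kv-T_\infty v=(\alpha_k-\bar\alpha)(J_{f_{\beta_k}}R_gv-J_gv)+\bar\alpha\bigl(J_{f_{\beta_k}}-J_{f_{\bar\beta}}\bigr)R_gv .
\]
By the computation behind Proposition~\ref{prop:fixed_block_interpretation}(2), \(J_{f_\beta}(w)=\Proj_{\mathcal A}(w-\beta\bar c)\). Because \(\Proj_{\mathcal A}\) is nonexpansive, \(\|J_{f_{\beta_k}}w-J_{f_{\bar\beta}}w\|\le|\beta_k-\bar\beta|\,\|\bar c\|\le|\beta_k-\bar\beta|\). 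The envelope gives \(|\alpha_k-\bar\alpha|\le s_\alpha\delta_k\) and \(|\beta_k-\bar\beta|\le\bar\beta(e^{\delta_k}-1)\le\bar\beta e^{\delta_0}\delta_k\). Boundedness of \(\{v^k\}\) together with nonexpansiveness of the resolvents bounds \(\|J_{f_{\beta_k}}R_gv^k-J_gv^k\|\) uniformly by some \(M\); the dependence on \(\beta_k\) is handled by the same Lipschitz-in-\(\beta\) estimate. Hence
\[
\epsilon_k:=(s_\alpha M+\bar\alpha\bar\beta e^{\delta_0})\delta_k=O(\delta_k),
\]
which is summable because \(p>1\).

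\emph{Descent inequality.} Write \(v^{k+1}=T_\infty v^k+e_k\) with \(\|e_k\|\le\epsilon_k\). Proposition~\ref{prop:fixed_block_interpretation}(3) says \(T_\infty\) is \(\bar\alpha/2\)-averaged, so the standard averaged-operator inequality gives
\[
\|T_\infty v-v^\star\|^2\le\|v-v^\star\|^2-c_\infty\|v-T_\infty v\|^2 .
\]
Expanding \(\|T_\infty v^k+e_k-v^\star\|^2\) then yields
\[
\|v^{k+1}-v^\star\|^2\le\|v^k-v^\star\|^2-c_\infty\|v^k-T_\infty v^k\|^2+\xi_k,
\]
with \(\xi_k:=2\epsilon_k\|T_\infty v^k-v^\star\|+\epsilon_k^2\). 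This \(\xi_k\) is summable since \(\|T_\infty v^k-v^\star\|\le\|v^k-v^\star\|\) is bounded. Telescoping gives
\[
c_\infty\sum_{k<N}\|v^k-T_\infty v^k\|^2\le D_\infty,
\]
which implies the \(\min\) bound.

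\emph{Terminal bound (main obstacle).} For a fixed operator the residual is monotone. Here \(r_k:=\|v^k-T_\infty v^k\|\) is only approximately monotone:
\[
r_{k+1}=\|T_kv^k-T_\infty T_kv^k\|\le\|T_\infty v^k-T_\infty T_\infty v^k\|+2\|e_k\|\le r_k+2\epsilon_k,
\]
using nonexpansiveness of \(I-T_\infty\)'s components. Precisely, \(v\mapsto v-T_\infty v\) is 2-Lipschitz, and the fixed-operator step does not increase the residual. So \(r_N\le r_k+2\sum_{j=k}^{N-1}\epsilon_j\) for every \(k\in[\lfloor N/2\rfloor,N)\).

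Apply the telescoped sum on the window \([\lfloor N/2\rfloor,N)\): restart the descent inequality there and use \(\|v^{\lfloor N/2\rfloor}-v^\star\|^2\le D_\infty\). This gives \(\min_{\lfloor N/2\rfloor\le k<N}r_k^2\le D_\infty/(c_\infty\lceil N/2\rceil)\le 2D_\infty/(c_\infty N)\). Combining the two estimates gives the stated bound with tail \(2\sum_{k\ge\lfloor N/2\rfloor}\epsilon_k\). The delicate point is getting the constant 2 rather than 4 in the Lipschitz estimate; I would try the sharper bound \(\|(I-T_\infty)a-(I-T_\infty)b\|\le\|a-b\|\cdot\bar\alpha\le2\|a-b\|\), which already suffices.

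\emph{Rates.} The tail satisfies \(\sum_{k\ge N/2}\delta_k=O(N^{1-p})\), so the terminal residual is \(O(N^{-1/2}+N^{1-p})\). When \(p\ge3/2\), \(N^{1-p}\le N^{-1/2}\), and the squared residual is \(O(1/N)\).
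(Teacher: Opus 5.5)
Your proposal is correct and follows the paper's proof essentially step for step. It has the same three ingredients: an \(O(\delta_k)\) bound on \(\|T_kv^k-T_\infty v^k\|\) from \(J_{f_\beta}(w)=\Proj_{\mathcal A}(w-\beta\bar c)\); perturbed Fej\'er descent for the \(\bar\alpha/2\)-averaged \(T_\infty\); and the approximate monotonicity \(r_{k+1}\le r_k+2\epsilon_k\) combined with a best index in \(\{\lfloor N/2\rfloor,\dots,N-1\}\). The paper merely routes the perturbation bound through the averaged form \((1-\alpha/2)I+(\alpha/2)R_{f_\beta}R_g\) rather than your direct decomposition, and the constant \(2\) you worry about comes out immediately from nonexpansiveness of \(T_\infty\).

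One bookkeeping fix is needed in the window estimate. Simply restrict the global bound \(c_\infty\sum_{k<N}r_k^2\le D_\infty\) to \(k\ge\lfloor N/2\rfloor\). If you instead restart the descent from \(\|v^{\lfloor N/2\rfloor}-v^\star\|^2\le D_\infty\), you add \(\sum_{k=\lfloor N/2\rfloor}^{N-1}\xi_k\) on top of \(D_\infty\), and the bound \(\sqrt{2D_\infty/(c_\infty N)}\) no longer follows as stated.
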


\begin{theorem}[Consistency of the terminal primal--dual readout]
\label{thm:terminal_primal_dual_certificate}
Assume that \(\mathcal K\) is a closed convex cone and the cone projection
is evaluated exactly. Every terminal state generated
by~\eqref{eq:rc_drs_block} yields a recovered slack satisfying
\(s^K\in\mathcal K^\ast\) and
\(\langle z^K,s^K\rangle=0\). Moreover, fix a CLP and consider a bounded
sequence of terminal readouts
\(\{(z_j,\lambda_j,s_j)\}_{j\ge1}\). If the residuals defined in
Section~\ref{app:terminal_diagnostics} satisfy
\(r_{{\rm p},j}\to0\), \(r_{{\rm d},j}\to0\), and
\(r_{{\rm gap},j}\to0\), then every cluster point
\((z^\star,\lambda^\star,s^\star)\) is primal--dual optimal for the
original CLP.
\end{theorem}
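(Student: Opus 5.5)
The plan has two parts: the structural statement ($s^K\in\mathcal K^\ast$, $\langle z^K,s^K\rangle=0$) comes from the Moreau decomposition applied to the last cone projection, and the consistency statement comes from passing to limits in the KKT residuals and invoking weak conic duality. Neither part needs Proposition~\ref{prop:fixed_block_interpretation} or the averaged-operator machinery; only the last step of~\eqref{eq:rc_drs_block} and closedness of the cones are used.

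\emph{Step 1 (terminal slack).} Set $p:=\bar x^{K}+u^{K-1}$, the point projected in the last transition, so that $z^K=\Proj_{\mathcal K}(p)$ and $u^K=u^{K-1}+\bar x^K-z^K=p-\Proj_{\mathcal K}(p)$. For a closed convex cone, Moreau's decomposition gives $p=\Proj_{\mathcal K}(p)+\Proj_{\mathcal K^\circ}(p)$ with $\langle \Proj_{\mathcal K}(p),\Proj_{\mathcal K^\circ}(p)\rangle=0$, where $\mathcal K^\circ=-\mathcal K^\ast$. Hence $u^K\in-\mathcal K^\ast$ and $\langle z^K,u^K\rangle=0$. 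Since $\kappa_{K-1}=\beta_{K-1}/(\|c\|_2+\varepsilon_c)>0$ by admissibility, the rescaling $s^K=-u^K/\kappa_{K-1}$ keeps us in the cone: $s^K\in\mathcal K^\ast$ and $\langle z^K,s^K\rangle=0$. I will note that this needs $K\ge1$ (so that $u^K$ is produced by a projection step) and exact evaluation of $\Proj_{\mathcal K}$, as assumed. The same argument gives $z^K\in\mathcal K$.

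\emph{Step 2 (limits).} Take a cluster point $(z^\star,\lambda^\star,s^\star)$ and pass to a convergent subsequence. The CLP is fixed and the sequence is bounded, so every normalizing denominator in~\eqref{eq:diags} is bounded above along the sequence. In particular $1+|c^\top z_j|+|b^\top\lambda_j|$ is bounded, so vanishing normalized residuals force the raw numerators to vanish:
\[
\|Az_j-b\|_2\to0,\qquad \|A^\top\lambda_j+s_j-c\|_2\to0,\qquad |c^\top z_j-b^\top\lambda_j|\to0.
\]
This denominator check is the only point needing care; it is exactly where boundedness is used. By continuity, $Az^\star=b$, $A^\top\lambda^\star+s^\star=c$, and $c^\top z^\star=b^\top\lambda^\star$. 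Each readout has $z_j\in\mathcal K$ and $s_j\in\mathcal K^\ast$ by Step~1, and both cones are closed, so $z^\star\in\mathcal K$ and $s^\star\in\mathcal K^\ast$.

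\emph{Step 3 (optimality).} For any primal-feasible $x$ (i.e.\ $Ax=b$, $x\in\mathcal K$),
\[
c^\top x=\lambda^{\star\top}Ax+\langle s^\star,x\rangle\ge b^\top\lambda^\star=c^\top z^\star,
\]
so $z^\star$ is primal optimal. For any dual-feasible $(\lambda,s)$, i.e.\ $A^\top\lambda+s=c$ with $s\in\mathcal K^\ast$, weak duality against $z^\star$ gives $b^\top\lambda\le c^\top z^\star=b^\top\lambda^\star$, so $(\lambda^\star,s^\star)$ is dual optimal. Complementarity follows from $\langle z^\star,s^\star\rangle=c^\top z^\star-\lambda^{\star\top}Az^\star=c^\top z^\star-b^\top\lambda^\star=0$. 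This completes primal--dual optimality of the cluster point.
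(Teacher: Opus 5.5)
Your proposal is correct and follows essentially the same route as the paper. The paper gets $u^K\in N_{\mathcal K}(z^K)=(-\mathcal K^\ast)\cap(z^K)^\perp$ from the normal-cone inequality with test points $\xi=0$ and $\xi=2z^K$, which is the same fact you take from Moreau's decomposition, and it then passes to the limit along a convergent subsequence (using boundedness only for the gap denominator) and concludes by weak duality, exactly as in your Steps~2--3.
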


\subsection{Auxiliary facts for the fixed-block DRS interpretation}
\label{app:fixed_block_auxiliary}

The first step is to identify the controlled transition in
\eqref{eq:rc_drs_block} with a standard DRS operator. This requires three
ingredients. We first compute the resolvents of the scaled affine-objective
term and the cone indicator. We then show that scaling the objective by
\(\beta\bar c\) preserves the primal minimizer set. Finally, we verify that
the state variables \((z^k,u^k)\) realize the usual DRS shadow iteration in
the variable \(v^k=z^k+u^k\). These ingredients together prove
Proposition~\ref{prop:fixed_block_interpretation}.

Recall that, for any fixed \(\beta>0\), we define
\[
    f_\beta(x)
    :=
    \beta\bar c^\top x+\mathbb I_{\mathcal A}(x),
    ~~
    g(x)
    :=
    \mathbb I_{\mathcal K}(x),
    ~~
    \bar c
    :=
    \frac{c}{\|c\|_2+\varepsilon_c}.
\]
Let \(J_{f_\beta}:=(I+\partial f_\beta)^{-1}\) and
\(J_g:=(I+\partial g)^{-1}\) be the corresponding resolvents. The
reflected resolvents are \(R_{f_\beta}:=2J_{f_\beta}-I\) and
\(R_g:=2J_g-I\). The DRS operator and its relaxed version are
\[
    T_\beta:=\frac12(I+R_{f_\beta}R_g),
    ~~
    T_{\alpha,\beta}
    :=
    (1-\alpha)I+\alpha T_\beta
    =
    I+\alpha(J_{f_\beta}R_g-J_g).
\]

\begin{lemma}[\(\beta\)-parameterized resolvents]
\label{lem:beta_resolvent}
For any \(\beta>0\),
\[
    J_{f_\beta}(v)=\Proj_{\mathcal A}(v-\beta\bar c),
    ~~
    J_g(v)=\Proj_{\mathcal K}(v).
\]
\end{lemma}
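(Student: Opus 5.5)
To prove Lemma~\ref{lem:beta_resolvent}, I will characterize each resolvent through its defining inclusion $p = J_h(v) \iff v - p \in \partial h(p)$. I will then identify the resulting optimality condition with the optimality condition of a projection problem. Since $f_\beta$ and $g$ are proper, closed, and convex ($\mathcal A$ and $\mathcal K$ are nonempty, closed, and convex, and the linear term is finite everywhere), both resolvents are single-valued and coincide with the proximal maps $\operatorname{prox}_{f_\beta}$ and $\operatorname{prox}_g$. So it is enough to compute
\[
J_{f_\beta}(v)=\argmin_x \Bigl\{\beta\bar c^\top x+\mathbb I_{\mathcal A}(x)+\tfrac12\|x-v\|_2^2\Bigr\}.
\]

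For $J_{f_\beta}$, I will complete the square:
\[
\beta\bar c^\top x+\tfrac12\|x-v\|_2^2=\tfrac12\|x-(v-\beta\bar c)\|_2^2+\text{const}(v).
\]
Minimizing this over $x\in\mathcal A$ is exactly computing $\Proj_{\mathcal A}(v-\beta\bar c)$, and the minimizer is unique by strong convexity. As a cross-check at the subdifferential level, I can use the sum rule. It applies because the linear term is continuous everywhere, and it gives
\[
\partial f_\beta(x)=\beta\bar c+N_{\mathcal A}(x).
\]
Then
\[
v-p\in\beta\bar c+N_{\mathcal A}(p)\iff (v-\beta\bar c)-p\in N_{\mathcal A}(p)\iff p=\Proj_{\mathcal A}(v-\beta\bar c),
\]
which is the standard normal-cone characterization of projection onto a closed convex set. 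If an explicit form is wanted, I can note that $N_{\mathcal A}(p)=\operatorname{range}(A^\top)$ when $\mathcal A\neq\emptyset$, which recovers the stated formula $\Proj_{\mathcal A}(w)=w-A^\top(AA^\top)^\dagger(Aw-b)$.

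For $J_g$, the inclusion $v-p\in N_{\mathcal K}(p)$ is directly the characterization $p=\Proj_{\mathcal K}(v)$. Equivalently, $\operatorname{prox}_{\mathbb I_{\mathcal K}}$ minimizes $\tfrac12\|x-v\|_2^2$ over $\mathcal K$.

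The only point needing care is justifying the subdifferential sum rule, or else the equivalence between the resolvent and the prox. I will avoid both by working with the prox/argmin formulation throughout. That formulation needs only that $\partial h$ is maximally monotone for proper, closed, convex $h$, and that the argmin characterization follows from Fermat's rule. Nonemptiness of $\mathcal A$, assumed at the start of this section, is what guarantees that the projection is well defined.
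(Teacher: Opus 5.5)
Your proposal is correct and matches the paper's proof: the paper also writes $J_{f_\beta}(v)$ as the minimizer of $\beta\bar c^\top x+\frac12\|x-v\|_2^2$ over $x\in\mathcal A$, completes the square to get $\Proj_{\mathcal A}(v-\beta\bar c)$, and identifies $J_g$ with the minimizer of $\frac12\|x-v\|_2^2$ over $\mathcal K$, i.e.\ $\Proj_{\mathcal K}(v)$. Your normal-cone cross-check via $\partial f_\beta(x)=\beta\bar c+N_{\mathcal A}(x)$ is valid, since the sum rule holds because the linear term is finite and continuous everywhere, but it is an optional addition the paper does not use.
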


\begin{proof}
By the definition of the resolvent,
\(J_{f_\beta}(v)=\argmin_{x\in\mathcal A}
\{\beta\bar c^\top x+\frac12\|x-v\|_2^2\}\). Completing the square gives
\[
    \beta\bar c^\top x+\frac12\|x-v\|_2^2
    =
    \frac12\|x-(v-\beta\bar c)\|_2^2
    +
    C(v,\beta,\bar c),
\]
where \(C(v,\beta,\bar c)\) is independent of \(x\). Hence
\(J_{f_\beta}(v)=\Proj_{\mathcal A}(v-\beta\bar c)\). Since
\(g=\mathbb I_{\mathcal K}\), its resolvent is
\(J_g(v)=\argmin_x\{\mathbb I_{\mathcal K}(x)+\frac12\|x-v\|_2^2\}
=\Proj_{\mathcal K}(v)\).
\end{proof}

Lemma~\ref{lem:beta_resolvent} shows that the affine step used by
\textsc{RCDRS} is exactly the resolvent of the scaled affine-objective
term. Thus the only remaining question at the optimization-problem level is
whether the scaling changes the target solution set. The next proposition
shows that it does not.

\begin{proposition}[Solution-set preservation]
\label{prop:solution_preservation}
Assume that the original CLP has a nonempty primal solution set. For any
\(\beta>0\),
\[
    \arg\min_{x\in\mathcal A\cap\mathcal K}
    \beta\bar c^\top x
    =
    \arg\min_{x\in\mathcal A\cap\mathcal K}
    c^\top x .
\]
\end{proposition}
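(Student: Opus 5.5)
The plan is to reduce the claim to the fact that multiplying an objective by a positive constant leaves its minimizers unchanged. The first step is to make the constant explicit. Since \(\bar c=c/(\|c\|_2+\varepsilon_c)\) with \(\varepsilon_c>0\), the scaled objective is
\[
    \beta\bar c^\top x=\kappa\, c^\top x,
    \qquad
    \kappa:=\frac{\beta}{\|c\|_2+\varepsilon_c}.
\]
The denominator is at least \(\varepsilon_c>0\), so it never vanishes, and \(\beta>0\) then gives \(\kappa>0\). This holds even when \(c=0\).

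Next, show both inclusions between the two argmin sets directly. Let \(x^\star\in\mathcal A\cap\mathcal K\) satisfy \(c^\top x^\star\le c^\top x\) for all \(x\in\mathcal A\cap\mathcal K\). Multiplying by \(\kappa>0\) preserves the inequality, so \(\beta\bar c^\top x^\star\le\beta\bar c^\top x\) on the same feasible set. Conversely, dividing by \(\kappa\) recovers the original inequality. The map \(t\mapsto\kappa t\) is strictly increasing, so the two objectives induce the same order on \(\mathcal A\cap\mathcal K\). Hence the sets of minimizers coincide.

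Two degenerate situations should be noted for completeness. If \(c=0\), both objectives vanish identically, and both argmin sets equal \(\mathcal A\cap\mathcal K\). Nonemptiness of the original solution set, as assumed, then transfers to the scaled problem. The optimal values are related by \(p_\beta^\star=\kappa p^\star\). This relation will be useful later, where \(s^K=-u^K/\kappa_{K-1}\) undoes exactly this scaling when the dual readout is recovered.

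No real obstacle is expected. The only points needing care are that \(\varepsilon_c>0\) keeps \(\kappa\) strictly positive and finite, and that the feasible set \(\mathcal A\cap\mathcal K\) is identical in both problems because only the objective is rescaled.
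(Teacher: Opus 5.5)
Your proposal is correct and follows essentially the same argument as the paper: write \(\beta\bar c=\kappa c\) with \(\kappa>0\), then note that positive scaling preserves the ordering of objective values on the common feasible set \(\mathcal A\cap\mathcal K\). The paper handles \(c=0\) as a separate case, whereas you correctly observe that \(\kappa>0\) already covers it, so the case split is unnecessary.
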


\begin{proof}
If \(c=0\), both objectives are constant on the feasible set, so the two
minimizer sets are equal to \(\mathcal A\cap\mathcal K\). If \(c\neq0\),
then \(\beta\bar c=\theta_\beta c\), where
\(\theta_\beta:=\beta/(\|c\|_2+\varepsilon_c)>0\). Therefore, for any
feasible \(x_1,x_2\),
\[
    c^\top x_1\le c^\top x_2
    \Longleftrightarrow
    \theta_\beta c^\top x_1\le \theta_\beta c^\top x_2
    \Longleftrightarrow
    \beta\bar c^\top x_1\le \beta\bar c^\top x_2.
\]
Thus the two objectives induce the same ordering on the feasible set and
have the same minimizers.
\end{proof}

Therefore, replacing \(c\) by the positive scaled direction
\(\beta\bar c\) changes the magnitude of the objective drive but not the
set of primal optima. We can now analyze the controlled update as a DRS
iteration for this equivalent scaled problem.

The next lemma connects the implementation-level state update to the
operator-theoretic DRS map. The key shadow variable is
\(v^k=z^k+u^k\). If \(z^k\) is the cone resolvent of \(v^k\), then one
state-form update preserves this relation and advances \(v^k\) by the
relaxed DRS operator \(T_{\alpha,\beta}\).

\begin{lemma}[State-form realization of the \(\beta\)-DRS block]
\label{lem:state_form_realization}
Fix \(\alpha\in(0,2)\) and \(\beta>0\). Suppose that, at iteration \(k\),
\(v^k=z^k+u^k\) and \(z^k=J_g(v^k)\). Apply one step of
\eqref{eq:rc_drs_block} with the fixed pair \((\alpha,\beta)\). Then
\[
    v^{k+1}:=z^{k+1}+u^{k+1}=T_{\alpha,\beta}v^k,
    ~~
    z^{k+1}=J_g(v^{k+1}),
\]
and \(u^{k+1}=v^{k+1}-z^{k+1}\).
\end{lemma}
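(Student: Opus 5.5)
The argument is a direct substitution. Everything is expressed in terms of \(v^k\), and Lemma~\ref{lem:beta_resolvent} identifies the projections in~\eqref{eq:rc_drs_block} with the resolvents \(J_{f_\beta}\) and \(J_g\).

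First, from \(z^k=J_g(v^k)\) and \(u^k=v^k-z^k\), the affine input is
\[
w^k=z^k-u^k-\beta\bar c=2J_g(v^k)-v^k-\beta\bar c=R_g(v^k)-\beta\bar c .
\]
Lemma~\ref{lem:beta_resolvent} then gives
\[
x^{k+1}=\Proj_{\mathcal A}\bigl(R_g v^k-\beta\bar c\bigr)=J_{f_\beta}R_g v^k .
\]

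Second, the cone input is
\[
\bar x^{k+1}+u^k=\alpha x^{k+1}+(1-\alpha)z^k+v^k-z^k=v^k+\alpha\bigl(J_{f_\beta}R_g v^k-J_g v^k\bigr).
\]
By the stated identity \(T_{\alpha,\beta}=I+\alpha(J_{f_\beta}R_g-J_g)\), this equals \(T_{\alpha,\beta}v^k\). Denote this point by \(p\).

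Third, the update gives \(z^{k+1}=\Proj_{\mathcal K}(p)=J_g(p)\). The dual update gives \(u^{k+1}=u^k+\bar x^{k+1}-z^{k+1}=p-z^{k+1}\). Hence \(v^{k+1}=z^{k+1}+u^{k+1}=p=T_{\alpha,\beta}v^k\). It follows that \(z^{k+1}=J_g(v^{k+1})\) and \(u^{k+1}=v^{k+1}-z^{k+1}\), which are all three claims.

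The only step needing care is confirming that \(I+\alpha(J_{f_\beta}R_g-J_g)\) agrees with \((1-\alpha)I+\alpha T_\beta\), if one wants to rely on the \(T_\beta\) definition rather than the stated identity. This reduces to
\[
\tfrac12\bigl(I+R_{f_\beta}R_g\bigr)=\tfrac12 I+J_{f_\beta}R_g-\tfrac12 R_g=I+J_{f_\beta}R_g-J_g,
\]
using \(R_g=2J_g-I\). So \(T_\beta=I+J_{f_\beta}R_g-J_g\), and multiplying by \(\alpha\) and adding \((1-\alpha)I\) gives the identity.
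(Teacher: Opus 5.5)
Your proposal is correct and follows essentially the same direct-substitution argument as the paper: rewrite \(z^k-u^k=R_gv^k\), identify \(x^{k+1}=J_{f_\beta}R_gv^k\) via Lemma~\ref{lem:beta_resolvent}, and observe that the cone input \(\bar x^{k+1}+u^k\) equals both \(z^{k+1}+u^{k+1}\) and \(T_{\alpha,\beta}v^k\). Your extra check that \((1-\alpha)I+\alpha T_\beta=I+\alpha(J_{f_\beta}R_g-J_g)\) is also correct; the paper simply takes this identity as part of the definition.
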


\begin{proof}
Since \(v^k=z^k+u^k\) and \(z^k=J_g(v^k)\), we have
\(z^k-u^k=2z^k-v^k=R_gv^k\). Hence the affine projection step gives
\[
    x^{k+1}
    =
    \Proj_{\mathcal A}(R_gv^k-\beta\bar c)
    =
    J_{f_\beta}(R_gv^k),
\]
where the second equality follows from Lemma~\ref{lem:beta_resolvent}.
Using the relaxation and dual-correction steps,
\[
\begin{aligned}
    v^{k+1}
    &=
    z^{k+1}+u^{k+1}
    =
    u^k+\bar x^{k+1}                                      \\
    &=
    v^k-z^k+\alpha x^{k+1}+(1-\alpha)z^k                  \\
    &=
    v^k+\alpha\bigl(J_{f_\beta}R_gv^k-J_gv^k\bigr)
    =
    T_{\alpha,\beta}v^k .
\end{aligned}
\]
Moreover, the cone projection step gives
\(z^{k+1}=\Proj_{\mathcal K}(\bar x^{k+1}+u^k)\). Since
\(\bar x^{k+1}+u^k=v^{k+1}\), we have
\(z^{k+1}=J_g(v^{k+1})\). The identity
\(u^{k+1}=v^{k+1}-z^{k+1}\) follows from the definition of \(v^{k+1}\).
\end{proof}

This establishes that the implemented block follows the same shadow
trajectory as relaxed DRS. The last ingredient needed for the fixed-block
interpretation is the averagedness of this relaxed operator.

\begin{proposition}[Averagedness]
\label{prop:averagedness}
Let \(\beta>0\) and \(\alpha\in(0,2)\). Then
\(T_{\alpha,\beta}\) is \(\alpha/2\)-averaged.
\end{proposition}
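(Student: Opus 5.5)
The plan is to reduce averagedness of \(T_{\alpha,\beta}\) to firm nonexpansiveness of the two resolvents, using standard facts from monotone operator theory. The first step is to note that \(f_\beta\) and \(g\) are proper, closed, convex functions. For \(g=\mathbb I_{\mathcal K}\) this holds because \(\mathcal K\) is nonempty closed convex. For \(f_\beta\), it is the sum of a linear function and the indicator of the nonempty affine set \(\mathcal A\). Hence \(\partial f_\beta\) and \(\partial g\) are maximally monotone, and their resolvents \(J_{f_\beta},J_g\) are firmly nonexpansive. This can also be seen directly from Lemma~\ref{lem:beta_resolvent}: \(J_g=\Proj_{\mathcal K}\) is a projection onto a closed convex set, and \(J_{f_\beta}=\Proj_{\mathcal A}(\cdot-\beta\bar c)\) is a projection composed with a translation. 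Both are therefore firmly nonexpansive.

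The second step uses the characterization that \(J\) is firmly nonexpansive if and only if \(R=2J-I\) is nonexpansive. This gives that \(R_{f_\beta}\) and \(R_g\) are nonexpansive. Their composition \(R_{f_\beta}R_g\) is then nonexpansive, since Lipschitz constants multiply. It follows that \(T_\beta=\tfrac12(I+R_{f_\beta}R_g)\) is \(\tfrac12\)-averaged, by the definition of averagedness, namely \(T=(1-\theta)I+\theta N\) with \(N\) nonexpansive and \(\theta\in(0,1)\).

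The third step handles the relaxation. We write
\[
T_{\alpha,\beta}=(1-\alpha)I+\alpha T_\beta=(1-\alpha)I+\alpha\bigl(\tfrac12 I+\tfrac12 N\bigr)=\bigl(1-\tfrac{\alpha}{2}\bigr)I+\tfrac{\alpha}{2}N,
\]
where \(N:=R_{f_\beta}R_g\). Since \(\alpha\in(0,2)\), we have \(\alpha/2\in(0,1)\). The displayed identity therefore exhibits \(T_{\alpha,\beta}\) directly as an \(\alpha/2\)-averaged operator. Nonexpansiveness then follows as a convex combination of \(I\) and \(N\). This identity also sidesteps the issue of over-relaxation \(\alpha>1\), where one cannot naively combine averaged operators.

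There is no real obstacle here. The only point needing care is justifying firm nonexpansiveness of \(J_{f_\beta}\) without assuming differentiability. I would handle it via the projection formula of Lemma~\ref{lem:beta_resolvent}, since translation preserves the firm nonexpansiveness inequality: \(\|J x-J y\|^2\le\langle Jx-Jy,x-y\rangle\) with the shift canceling in \(x-y\).
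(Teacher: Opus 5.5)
Your proposal is correct and matches the paper's proof step for step. Firm nonexpansiveness of the resolvents makes the reflected resolvents nonexpansive, so \(N_\beta=R_{f_\beta}R_g\) is nonexpansive, and the identity \(T_{\alpha,\beta}=(1-\alpha/2)I+(\alpha/2)N_\beta\) with \(\alpha/2\in(0,1)\) gives \(\alpha/2\)-averagedness; your direct check via the projection formula (the translation cancels in the firm-nonexpansiveness inequality) is a valid, slightly more explicit substitute for the paper's appeal to maximal monotonicity.
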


\begin{proof}
Since \(f_\beta\) and \(g\) are proper closed convex functions, their
subdifferentials are maximal monotone. Hence \(J_{f_\beta}\) and \(J_g\)
are firmly nonexpansive, and \(R_{f_\beta}\) and \(R_g\) are nonexpansive.
Thus \(N_\beta:=R_{f_\beta}R_g\) is nonexpansive. Since
\(T_\beta=\frac12 I+\frac12N_\beta\),
\[
    T_{\alpha,\beta}
    =
    (1-\alpha)I+\alpha T_\beta
    =
    \left(1-\frac{\alpha}{2}\right)I
    +
    \frac{\alpha}{2}N_\beta .
\]
Because \(\alpha/2\in(0,1)\), \(T_{\alpha,\beta}\) is
\(\alpha/2\)-averaged.
\end{proof}

We now have all ingredients needed for
Proposition~\ref{prop:fixed_block_interpretation}: solution-set
preservation, state-form realization, and averagedness.

\begin{proof}[Proof of Proposition~\ref{prop:fixed_block_interpretation}]
The first statement follows from
Proposition~\ref{prop:solution_preservation}, the second from
Lemma~\ref{lem:state_form_realization}, and the third from
Proposition~\ref{prop:averagedness}. This proves the proposition.
\end{proof}

At this point, the fixed-block claim is complete: a fixed admissible
\((\alpha,\beta)\) block preserves the primal solution set, realizes the
standard DRS shadow iteration, and remains an averaged operator. The next
subsection uses averagedness to derive the terminal residual bound stated in
Proposition~\ref{prop:finite_step_fixed_residual}.

\subsection{Terminal residual bounds for fixed admissible blocks}
\label{app:fixed_block_residual_bounds}

The previous subsection established that every fixed admissible block is an
averaged operator. The proof below applies the standard descent inequality
for averaged operators and the monotonicity of the fixed-point residual to
obtain the terminal \(O(1/N)\) squared-residual bound.

\begin{proof}[Proof of Proposition~\ref{prop:finite_step_fixed_residual}]
Let \(a=\alpha/2\). By Proposition~\ref{prop:averagedness},
\(T_{\alpha,\beta}\) is \(a\)-averaged, so
\(T_{\alpha,\beta}=(1-a)I+aN_\beta\) for some nonexpansive operator
\(N_\beta\). Let \(v^\star\in\Fix(T_{\alpha,\beta})\). Then
\(N_\beta v^\star=v^\star\). For any \(v\),
\[
\begin{aligned}
    \|T_{\alpha,\beta}v-v^\star\|_2^2
    &=
    \|(1-a)(v-v^\star)+a(N_\beta v-v^\star)\|_2^2        \\
    &\le
    \|v-v^\star\|_2^2
    -
    a(1-a)\|v-N_\beta v\|_2^2 .
\end{aligned}
\]
Since \(v-T_{\alpha,\beta}v=a(v-N_\beta v)\), this becomes
\[
    \|T_{\alpha,\beta}v-v^\star\|_2^2
    \le
    \|v-v^\star\|_2^2
    -
    \frac{1-a}{a}
    \|v-T_{\alpha,\beta}v\|_2^2 .
\]
Using \(a=\alpha/2\) and applying the inequality to
\(v^{k+1}=T_{\alpha,\beta}v^k\), we obtain
\[
    \|v^{k+1}-v^\star\|_2^2
    \le
    \|v^k-v^\star\|_2^2
    -
    \frac{2-\alpha}{\alpha}
    \|v^k-T_{\alpha,\beta}v^k\|_2^2 .
\]
Summing from \(k=0\) to \(N-1\) gives
\[
    \sum_{k=0}^{N-1}
    \|v^k-T_{\alpha,\beta}v^k\|_2^2
    \le
    \frac{\alpha}{2-\alpha}\|v^0-v^\star\|_2^2 .
\]
Let \(r_k:=\|v^k-T_{\alpha,\beta}v^k\|_2\). Since
\(T_{\alpha,\beta}\) is nonexpansive,
\[
    r_{k+1}
    =
    \|T_{\alpha,\beta}v^k-T_{\alpha,\beta}v^{k+1}\|_2
    \le
    \|v^k-v^{k+1}\|_2
    =
    r_k .
\]
Thus \(r_N^2\le N^{-1}\sum_{k=0}^{N-1}r_k^2\), and hence
\[
    \|v^N-T_{\alpha,\beta}v^N\|_2^2
    \le
    \frac{\alpha}{(2-\alpha)N}
    \|v^0-v^\star\|_2^2 .
\]
Taking the infimum over \(v^\star\in\Fix(T_{\alpha,\beta})\) proves the
claim.
\end{proof}

The bound controls the DRS fixed-point residual in the shadow variable
\(v^k\). Since the training objective does not directly penalize this
shadow residual, we next relate it to the terminal movement of the primal
candidate \(z^k\), which appears in the loss.

\begin{proposition}[Terminal movement as a residual surrogate]
\label{prop:movement_residual_surrogate}
Consider a fixed block with
\(v^{k+1}=T_{\alpha,\beta}v^k\) and \(z^k=J_g(v^k)\), where
\(\alpha\in(0,2)\), \(\beta>0\), and
\(\Fix(T_{\alpha,\beta})\neq\emptyset\). Then, for every \(k\ge1\),
\[
    \|z^k-z^{k-1}\|_2
    \le
    \|v^k-v^{k-1}\|_2
    =
    \|T_{\alpha,\beta}v^{k-1}-v^{k-1}\|_2 .
\]
Moreover, for every \(N\ge1\),
\[
    \|z^N-z^{N-1}\|_2^2
    \le
    \frac{\alpha}{(2-\alpha)N}
    \operatorname{dist}^2(v^0,\Fix(T_{\alpha,\beta})).
\]
\end{proposition}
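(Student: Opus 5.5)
The plan is to derive both claims directly from the shadow-state correspondence of Lemma~\ref{lem:state_form_realization} and from Proposition~\ref{prop:finite_step_fixed_residual}.

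\textbf{First claim.} Since \(z^k=J_g(v^k)=\Proj_{\mathcal K}(v^k)\) for every \(k\), and projection onto a nonempty closed convex set is (firmly) nonexpansive, we get
\[
\|z^k-z^{k-1}\|_2=\|\Proj_{\mathcal K}v^k-\Proj_{\mathcal K}v^{k-1}\|_2\le\|v^k-v^{k-1}\|_2 .
\]
The projection formula is Lemma~\ref{lem:beta_resolvent}, and the relation \(z^k=J_g(v^k)\) along the trajectory follows by induction from Lemma~\ref{lem:state_form_realization} and the initialization. The equality \(\|v^k-v^{k-1}\|_2=\|T_{\alpha,\beta}v^{k-1}-v^{k-1}\|_2\) is just the recursion \(v^k=T_{\alpha,\beta}v^{k-1}\).

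\textbf{Second claim.} Squaring the first claim at \(k=N\) gives
\[
\|z^N-z^{N-1}\|_2^2\le r_{N-1}^2, \qquad r_k:=\|v^k-T_{\alpha,\beta}v^k\|_2 .
\]
Proposition~\ref{prop:finite_step_fixed_residual} bounds \(r_N^2\), not \(r_{N-1}^2\), so it cannot be applied verbatim: it would give the constant \(1/(N-1)\) and fail at \(N=1\). I would therefore reopen its proof instead of citing the statement. That proof establishes two facts:
\[
\sum_{k=0}^{N-1} r_k^2\le \frac{\alpha}{2-\alpha}\|v^0-v^\star\|_2^2, \qquad r_{k+1}\le r_k \ \text{for all } k .
\]
By the monotonicity of \(r_k\), \(r_{N-1}\) is the smallest of \(r_0,\dots,r_{N-1}\). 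Hence
\[
r_{N-1}^2\le \frac1N\sum_{k=0}^{N-1}r_k^2\le \frac{\alpha}{(2-\alpha)N}\|v^0-v^\star\|_2^2 .
\]
Taking the infimum over \(v^\star\in\Fix(T_{\alpha,\beta})\) yields the stated bound with the constant \(1/N\), valid already at \(N=1\).

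The only delicate point is this index shift. It is resolved by using the monotonicity and the summed descent inequality directly rather than the terminal statement of Proposition~\ref{prop:finite_step_fixed_residual}. Everything else is immediate.
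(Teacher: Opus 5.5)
Your proposal is correct and matches the paper's proof essentially step for step. The first claim comes from nonexpansiveness of $J_g=\Proj_{\mathcal K}$. For the index shift, you do exactly what the paper does: combine $r_{k+1}\le r_k$ with the summed bound $\sum_{k=0}^{N-1}r_k^2\le\frac{\alpha}{2-\alpha}\|v^0-v^\star\|_2^2$ taken from inside the proof of Proposition~\ref{prop:finite_step_fixed_residual}, not from its terminal statement. (The induction you use for $z^k=J_g(v^k)$ is unnecessary, since the statement assumes that relation.)
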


\begin{proof}
Since \(J_g\) is firmly nonexpansive, it is nonexpansive. Hence
\[
    \|z^k-z^{k-1}\|_2
    =
    \|J_g(v^k)-J_g(v^{k-1})\|_2
    \le
    \|v^k-v^{k-1}\|_2 .
\]
The equality
\(\|v^k-v^{k-1}\|_2=\|T_{\alpha,\beta}v^{k-1}-v^{k-1}\|_2\) follows from
\(v^k=T_{\alpha,\beta}v^{k-1}\). Since \(T_{\alpha,\beta}\) is
nonexpansive, the sequence \(\|v^{k+1}-v^k\|_2\) is nonincreasing.
Therefore,
\[
    \|v^N-v^{N-1}\|_2^2
    \le
    \frac1N
    \sum_{k=0}^{N-1}
    \|T_{\alpha,\beta}v^k-v^k\|_2^2 .
\]
Combining this with the summation bound from the proof of
Proposition~\ref{prop:finite_step_fixed_residual} gives
\[
    \|v^N-v^{N-1}\|_2^2
    \le
    \frac{\alpha}{(2-\alpha)N}
    \operatorname{dist}^2(v^0,\Fix(T_{\alpha,\beta})).
\]
The claimed bound follows from
\(\|z^N-z^{N-1}\|_2\le\|v^N-v^{N-1}\|_2\).
\end{proof}

Thus the terminal movement loss used in training can be interpreted as a
computable surrogate for fixed-point residual decay. The fixed-block
analysis is now complete. We next move to the learned case, where the
controller changes \((\alpha_k,\beta_k)\) along the rollout.

\subsection{Safeguarded time-varying rollouts}
\label{app:time_varying_rollout_proofs}

The fixed-block proof cannot be applied directly when the controller produces
a different operator \(T_k\) at each iteration. This subsection concerns
\textsc{RCDRS-Env}; the unconstrained \textsc{RCDRS-NoEnv} variant is not
claimed to satisfy the following envelope guarantee. We compare the
safeguarded rollout with a limiting fixed operator \(T_\infty\). The argument
has two steps: first, show that
\(T_{\alpha,\beta}\) depends locally Lipschitz-continuously on
\((\alpha,\log\beta)\) on bounded trajectories; second, use the decaying
envelope to show that \(T_kv^k-T_\infty v^k\) is summable.

The learned controller produces a time-varying sequence
\(T_k=T_{\alpha_k,\beta_k}\). Under the safeguarded parameterization
in~\eqref{eq:envelope_param},
\(\alpha_k=\bar\alpha+s_\alpha\delta_k q_{\alpha,k}\) and
\(\log\beta_k=\log\bar\beta+\delta_kq_{\beta,k}\),
where \(q_{\alpha,k},q_{\beta,k}\in[-1,1]\), \(s_\alpha>0\),
\(\delta_k\ge0\), and \(\sum_{k=0}^{\infty}\delta_k<\infty\). Let
\(T_\infty:=T_{\bar\alpha,\bar\beta}\). The \(\rho_k\) coordinate is
omitted in this analysis because it is used only as a controller-side
scale memory and does not enter the DRS operator \(T_{\alpha,\beta}\).
By construction, the safeguarded actions lie in compact intervals
\([\alpha_{\min},\alpha_{\max}]\subset(0,2)\) and
\([\beta_{\min},\beta_{\max}]\subset(0,\infty)\).

\begin{lemma}[Local Lipschitz dependence on \(\alpha\) and \(\beta\)]
\label{lem:local_lipschitz_alpha_beta}
Fix
\(0<\alpha_{\min}\le\alpha,\alpha'\le\alpha_{\max}<2\) and
\(0<\beta_{\min}\le\beta,\beta'\le\beta_{\max}<\infty\). For every
bounded set \(B_R:=\{v:\|v\|_2\le R\}\), there exists \(L_R>0\) such that
\[
    \|T_{\alpha,\beta}v-T_{\alpha',\beta'}v\|_2
    \le
    L_R
    \left(
        |\alpha-\alpha'|
        +
        |\log\beta-\log\beta'|
    \right)
\]
for all \(v\in B_R\).
\end{lemma}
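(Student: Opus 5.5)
We use the explicit form \(T_{\alpha,\beta}=I+\alpha(J_{f_\beta}R_g-J_g)\) from Lemma~\ref{lem:beta_resolvent}, i.e.
\[
    T_{\alpha,\beta}v
    =
    v+\alpha\bigl(\Proj_{\mathcal A}(R_gv-\beta\bar c)-\Proj_{\mathcal K}(v)\bigr).
\]
We insert the intermediate operator \(T_{\alpha',\beta}\) and apply the triangle inequality. This gives
\(\|T_{\alpha,\beta}v-T_{\alpha',\beta'}v\|_2\le\|T_{\alpha,\beta}v-T_{\alpha',\beta}v\|_2+\|T_{\alpha',\beta}v-T_{\alpha',\beta'}v\|_2\).
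The first term varies only \(\alpha\), and the second varies only \(\beta\).

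\textbf{The \(\alpha\)-term.} It equals \(|\alpha-\alpha'|\,\|J_{f_\beta}R_gv-J_gv\|_2\), so we need a bound on \(\|J_{f_\beta}R_gv-J_gv\|_2\) uniform over \(v\in B_R\) and \(\beta\le\beta_{\max}\). Fix a point \(p\in\mathcal A\), so \(\Proj_{\mathcal A}(p)=p\). Since \(0\in\mathcal K\), we have \(\Proj_{\mathcal K}(0)=0\). Nonexpansiveness of \(\Proj_{\mathcal K}\) gives \(\|J_gv\|_2\le R\), hence \(\|R_gv\|_2\le 3R\). Nonexpansiveness of \(\Proj_{\mathcal A}\) gives
\[
    \|\Proj_{\mathcal A}(R_gv-\beta\bar c)\|_2\le \|p\|_2+3R+\beta_{\max}\|\bar c\|_2+\|p\|_2 .
\]
Here \(\|\bar c\|_2<1\). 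So the first term is at most \(M_R|\alpha-\alpha'|\) with \(M_R:=4R+2\|p\|_2+\beta_{\max}\).

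\textbf{The \(\beta\)-term.} It equals
\[
    \alpha'\,\|\Proj_{\mathcal A}(R_gv-\beta\bar c)-\Proj_{\mathcal A}(R_gv-\beta'\bar c)\|_2\le 2\,|\beta-\beta'|\,\|\bar c\|_2\le 2|\beta-\beta'|,
\]
again by nonexpansiveness of the affine projection. The bound is in fact affine in the direction \(A^\top(AA^\top)^\dagger A\bar c\), but the inequality suffices. To pass to \(\log\beta\), we apply the mean value theorem to \(\exp\) on \([\log\beta_{\min},\log\beta_{\max}]\). This gives \(|\beta-\beta'|\le\beta_{\max}|\log\beta-\log\beta'|\).

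Combining the two terms yields the lemma with \(L_R:=\max\{M_R,\,2\beta_{\max}\}\). This constant is in fact uniform in \(\beta\) and only mildly dependent on \(R\).

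\textbf{Main obstacle.} The only delicate point is making the \(\alpha\)-term bound uniform. This needs \(\mathcal A\neq\emptyset\) and the cone property \(0\in\mathcal K\), or more generally any fixed reference point in \(\mathcal K\), which is standing in this section. The \(\beta\)-dependence is harmless: it is globally Lipschitz in \(\beta\), and only the log-reparametrization requires the compact range \([\beta_{\min},\beta_{\max}]\).
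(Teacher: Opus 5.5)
Your proof is correct and is essentially the paper's argument: your triangle inequality through \(T_{\alpha',\beta}\) reproduces exactly the paper's split \(\tfrac{\alpha-\alpha'}{2}(N_\beta v-v)+\tfrac{\alpha'}{2}(N_\beta v-N_{\beta'}v)\) with \(N_\beta=R_{f_\beta}R_g\), since \(N_\beta v-v=2(J_{f_\beta}R_gv-J_gv)\). The only difference is that you bound the \(\alpha\)-coefficient explicitly through \(0\in\mathcal K\) and a point \(p\in\mathcal A\), whereas the paper uses nonexpansiveness of \(N_\beta\) plus uniform boundedness of \(N_\beta 0\) over \([\beta_{\min},\beta_{\max}]\); one harmless slip in your unused aside is that \(\Proj_{\mathcal A}(w-\beta\bar c)-\Proj_{\mathcal A}(w-\beta'\bar c)=-(\beta-\beta')\bigl(I-A^\top(AA^\top)^\dagger A\bigr)\bar c\), so the difference points along the null-space component of \(\bar c\), not along \(A^\top(AA^\top)^\dagger A\bar c\).
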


\begin{proof}
Using the averaged representation,
\(T_{\alpha,\beta}=(1-\alpha/2)I+(\alpha/2)N_\beta\), where
\(N_\beta:=R_{f_\beta}R_g\). Hence
\[
    T_{\alpha,\beta}v-T_{\alpha',\beta'}v
    =
    \frac{\alpha-\alpha'}{2}(N_\beta v-v)
    +
    \frac{\alpha'}{2}(N_\beta v-N_{\beta'}v).
\]
We first bound \(\|N_\beta v-v\|_2\) on \(B_R\), uniformly for
\(\beta\in[\beta_{\min},\beta_{\max}]\). Since \(N_\beta\) is
nonexpansive, \(\|N_\beta v\|_2\le\|v\|_2+\|N_\beta0\|_2\). Moreover,
by Lemma~\ref{lem:beta_resolvent} and nonexpansiveness of
\(\Proj_{\mathcal A}\),
\[
    \|J_{f_\beta}(y)-J_{f_{\beta'}}(y)\|_2
    \le
    |\beta-\beta'|\|\bar c\|_2
    \le
    |\beta-\beta'|.
\]
Taking any fixed \(\beta_0\in[\beta_{\min},\beta_{\max}]\), this implies
that \(J_{f_\beta}(y)\), and hence \(N_\beta0\), is uniformly bounded for
\(\beta\) in the compact interval. Therefore, there exists \(C_R>0\)
such that \(\|N_\beta v-v\|_2\le C_R\) for all \(v\in B_R\).

For the second term, let \(y=R_gv\). Since
\(N_\beta v=R_{f_\beta}y\), Lemma~\ref{lem:beta_resolvent} gives
\[
\begin{aligned}
    \|N_\beta v-N_{\beta'}v\|_2
    &=
    2\|J_{f_\beta}y-J_{f_{\beta'}}y\|_2                 \\
    &\le
    2|\beta-\beta'|\|\bar c\|_2
    \le
    2|\beta-\beta'|.
\end{aligned}
\]
Because \(\beta,\beta'\in[\beta_{\min},\beta_{\max}]\),
\[
    |\beta-\beta'|
    =
    |\exp(\log\beta)-\exp(\log\beta')|
    \le
    \beta_{\max}|\log\beta-\log\beta'|.
\]
Combining the above bounds and using \(\alpha'\le\alpha_{\max}\), we
obtain the claimed inequality for a suitable constant \(L_R>0\).
\end{proof}

Lemma~\ref{lem:local_lipschitz_alpha_beta} converts parameter deviations
into operator deviations on bounded trajectories. The safeguarded envelope
makes these parameter deviations decay summably, so the next lemma turns the
controller envelope into a summable perturbation bound.

\begin{lemma}[Summable perturbation induced by the envelope]
\label{lem:summable_perturbation}
Assume that the rollout \(v^{k+1}=T_kv^k\) remains in a bounded set
\(B_R\), and that \(\sum_{k=0}^{\infty}\delta_k<\infty\). Then there
exists a summable nonnegative sequence \(\{\epsilon_k\}\) such that
\[
    \|T_kv^k-T_\infty v^k\|_2
    \le
    \epsilon_k,
    ~~
    \sum_{k=0}^{\infty}\epsilon_k<\infty .
\]
\end{lemma}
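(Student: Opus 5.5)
The plan is to apply Lemma~\ref{lem:local_lipschitz_alpha_beta} pointwise along the trajectory with the primed pair taken as the limiting pair $(\bar\alpha,\bar\beta)$. First, fix $R$ with $\|v^k\|_2\le R$ for all $k$; this is possible by the boundedness assumption. Then check that all actions lie in compact admissible intervals, so the lemma gives one constant $L_R$ valid for every $k$.

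For $\alpha$, the envelope gives $\alpha_k\in[\bar\alpha-s_\alpha\delta_0,\bar\alpha+s_\alpha\delta_0]$, which sits inside $(0,2)$ by the stated condition on the constants. Here $\delta_k\le\delta_0$ because $\delta_k=\delta_0/(1+(k/k_0)^p)$ is nonincreasing. For $\beta$, we have $\log\beta_k\in[\log\bar\beta-\delta_0,\log\bar\beta+\delta_0]$, so $\beta_k\in[\bar\beta e^{-\delta_0},\bar\beta e^{\delta_0}]\subset(0,\infty)$. In both cases the limiting values $\bar\alpha,\bar\beta$ lie in the same intervals. Take $\alpha_{\min},\alpha_{\max},\beta_{\min},\beta_{\max}$ to be these endpoints.

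Next, invoke Lemma~\ref{lem:local_lipschitz_alpha_beta} with $v=v^k\in B_R$, $(\alpha,\beta)=(\alpha_k,\beta_k)$ and $(\alpha',\beta')=(\bar\alpha,\bar\beta)$. This yields
\[
\|T_kv^k-T_\infty v^k\|_2\le L_R\bigl(|\alpha_k-\bar\alpha|+|\log\beta_k-\log\bar\beta|\bigr).
\]
By the envelope parameterization and $|q_{\alpha,k}|,|q_{\beta,k}|\le1$, we have $|\alpha_k-\bar\alpha|=s_\alpha\delta_k|q_{\alpha,k}|\le s_\alpha\delta_k$ and $|\log\beta_k-\log\bar\beta|\le\delta_k$.

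Define $\epsilon_k:=L_R(s_\alpha+1)\delta_k$. This sequence is nonnegative, it satisfies the required bound, and $\sum_k\epsilon_k=L_R(s_\alpha+1)\sum_k\delta_k<\infty$. This also gives $\epsilon_k=O(\delta_k)$, as used later in the rate discussion.

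The only delicate point is that $L_R$ must be independent of $k$. This holds because the lemma's constant depends only on $R$ and the interval endpoints, and those endpoints were fixed uniformly in the first step. The boundedness of the trajectory is essential, since the constant $C_R$ in the lemma grows with $R$. Beyond this, the argument is a direct chaining of the earlier lemma with the envelope bounds.
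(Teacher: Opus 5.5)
Your proposal is correct and follows the paper's proof essentially line by line. You apply Lemma~\ref{lem:local_lipschitz_alpha_beta} with $(\alpha',\beta')=(\bar\alpha,\bar\beta)$, use the envelope bounds $|\alpha_k-\bar\alpha|\le s_\alpha\delta_k$ and $|\log\beta_k-\log\bar\beta|\le\delta_k$, and set $\epsilon_k:=L_R(s_\alpha+1)\delta_k$; your explicit check that all actions stay in fixed compact intervals, so that $L_R$ is uniform in $k$, spells out a point the paper only asserts in the text preceding the lemma.
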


\begin{proof}
The envelope gives
\(|\alpha_k-\bar\alpha|\le s_\alpha\delta_k\) and
\(|\log\beta_k-\log\bar\beta|\le\delta_k\). Since \(v^k\in B_R\),
Lemma~\ref{lem:local_lipschitz_alpha_beta} implies
\[
    \|T_kv^k-T_\infty v^k\|_2
    \le
    L_R(s_\alpha+1)\delta_k .
\]
Set \(\epsilon_k:=L_R(s_\alpha+1)\delta_k\). Since
\(\sum_k\delta_k<\infty\), the sequence \(\{\epsilon_k\}\) is summable.
\end{proof}

The time-varying learned rollout can therefore be written as
\(v^{k+1}=T_\infty v^k+e_k\), where the perturbations \(e_k\) are
summable. This puts the rollout in the standard perturbed averaged-operator
setting and allows the fixed-block descent argument to be reused up to
summable error terms.

\begin{proof}[Proof of Theorem~\ref{thm:finite_step_envelope}]
Since \(T_\infty=T_{\bar\alpha,\bar\beta}\) is
\(\bar\alpha/2\)-averaged, the averaged-operator descent inequality gives
\[
    \|T_\infty v-v^\star\|_2^2
    \le
    \|v-v^\star\|_2^2
    -
    c_\infty
    \|v-T_\infty v\|_2^2,
    ~~
    c_\infty=\frac{2-\bar\alpha}{\bar\alpha}>0.
\]
Define \(e_k:=T_kv^k-T_\infty v^k\). Then
\(v^{k+1}=T_\infty v^k+e_k\). By
Lemma~\ref{lem:summable_perturbation}, there exists a summable sequence
\(\{\epsilon_k\}\) such that \(\|e_k\|_2\le\epsilon_k\). Since
\(\{v^k\}\) is bounded and \(T_\infty\) is nonexpansive, there exists
\(M>0\) such that
\(\|T_\infty v^k-v^\star\|_2\le M\) for all \(k\). Therefore,
\[
\begin{aligned}
    \|v^{k+1}-v^\star\|_2^2
    &=
    \|T_\infty v^k+e_k-v^\star\|_2^2                  \\
    &\le
    \|T_\infty v^k-v^\star\|_2^2
    +
    2M\epsilon_k+\epsilon_k^2                         \\
    &\le
    \|v^k-v^\star\|_2^2
    -
    c_\infty\|v^k-T_\infty v^k\|_2^2
    +
    \xi_k,
\end{aligned}
\]
where \(\xi_k:=2M\epsilon_k+\epsilon_k^2\). Since
\(\{\epsilon_k\}\) is summable and nonnegative, \(\{\xi_k\}\) is also
summable. Define
\[
    D_\infty
    :=
    \|v^0-v^\star\|_2^2+\sum_{k=0}^{\infty}\xi_k<\infty .
\]
Rearranging and summing from \(k=0\) to \(N-1\) yields
\[
    c_\infty
    \sum_{k=0}^{N-1}
    \|v^k-T_\infty v^k\|_2^2
    \le
    \|v^0-v^\star\|_2^2
    +\sum_{k=0}^{N-1}\xi_k
    \le D_\infty .
\]
Taking the minimum over \(0\le k<N\) gives
\[
    \min_{0\le k<N}
    \|v^k-T_\infty v^k\|_2^2
    \le
    \frac{D_\infty}{c_\infty N}.
\]

It remains to control the terminal rather than only the best residual. Let
\(r_k:=\|v^k-T_\infty v^k\|_2\). Since
\(v^{k+1}=T_\infty v^k+e_k\), nonexpansiveness of \(T_\infty\) gives
\[
\begin{aligned}
    r_{k+1}
    &=
    \|v^{k+1}-T_\infty v^{k+1}\|_2 \\
    &\le
    \|T_\infty v^k-T_\infty^2v^k\|_2+2\|e_k\|_2
    \le r_k+2\epsilon_k .
\end{aligned}
\]
Choose \(j_N\in\{\lfloor N/2\rfloor,\ldots,N-1\}\) minimizing \(r_k\)
over this index set. The preceding summation bound implies
\[
    r_{j_N}
    \le
    \sqrt{\frac{2D_\infty}{c_\infty N}} .
\]
Iterating \(r_{k+1}\le r_k+2\epsilon_k\) from \(j_N\) to \(N-1\)
therefore yields
\[
    \|v^N-T_\infty v^N\|_2
    \le
    \sqrt{\frac{2D_\infty}{c_\infty N}}
    +
    2\sum_{k=\lfloor N/2\rfloor}^{\infty}\epsilon_k .
\]
Finally, Lemma~\ref{lem:summable_perturbation} gives
\(\epsilon_k=O(\delta_k)\). If
\(\delta_k=\delta_0/(1+(k/k_0)^p)\) with \(p>1\), an integral comparison
gives \(\sum_{k=\lfloor N/2\rfloor}^{\infty}\epsilon_k=O(N^{1-p})\).
Hence the terminal residual is
\(O(N^{-1/2}+N^{1-p})\); for \(p\ge3/2\), its square is \(O(1/N)\).
This proves the theorem.
\end{proof}

The theorem shows that the safeguarded trajectory remains a summable
perturbation of a limiting relaxed DRS operator. The best-iterate estimate
follows from perturbed Fej\'er descent, while the summable tail controls the
terminal residual used by a fixed-depth layer.

\begin{corollary}[Terminal residual convergence under the envelope]
\label{cor:envelope_residual_convergence}
Under the assumptions of Theorem~\ref{thm:finite_step_envelope},
\[
    \|v^k-T_\infty v^k\|_2\to0.
\]
\end{corollary}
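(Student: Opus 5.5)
The corollary follows from Theorem~\ref{thm:finite_step_envelope} by passing to the limit $N\to\infty$ in its terminal bound. Under the stated hypotheses the theorem supplies a summable nonnegative sequence $\{\epsilon_k\}$ and a finite constant $D_\infty=\|v^0-v^\star\|_2^2+\sum_k\xi_k$, where $\{\xi_k\}$ is also summable. For every $N\ge1$ it gives
\[
    \|v^N-T_\infty v^N\|_2
    \le
    \sqrt{\frac{2D_\infty}{c_\infty N}}
    +2\sum_{k=\lfloor N/2\rfloor}^{\infty}\epsilon_k .
\]
I will show that each term on the right tends to zero.

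The first term tends to zero because $D_\infty<\infty$ and $c_\infty=(2-\bar\alpha)/\bar\alpha>0$, since $\bar\alpha\in(0,2)$. The second term is a tail of the convergent series $\sum_k\epsilon_k$. Because $\lfloor N/2\rfloor\to\infty$, this tail tends to zero by the Cauchy criterion. Replacing $N$ by $k$, we conclude that $\|v^k-T_\infty v^k\|_2\to0$.

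There is an alternative, self-contained route as a cross-check. The summed descent inequality in the proof of the theorem shows that $\sum_k r_k^2\le D_\infty/c_\infty<\infty$, where $r_k:=\|v^k-T_\infty v^k\|_2$. Hence $\liminf_k r_k=0$. The recursion $r_{k+1}\le r_k+2\epsilon_k$, together with summability of $\{\epsilon_k\}$, shows that $r_k+2\sum_{j\ge k}\epsilon_j$ is nonincreasing. A nonnegative nonincreasing sequence converges, and this one has liminf zero, so its limit is zero. Since the tail sum vanishes, $r_k\to0$.

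I do not expect a real obstacle here. The only point to check is that $D_\infty$ is finite, and this is guaranteed by the boundedness assumption through the constant $M$ and the definition $\xi_k=2M\epsilon_k+\epsilon_k^2$ used in the theorem.
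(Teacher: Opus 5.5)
Your main argument is correct and is exactly the paper's proof: the terminal bound of Theorem~\ref{thm:finite_step_envelope} vanishes because $N^{-1/2}\to0$ with $D_\infty<\infty$ and $c_\infty>0$, and because the tail $\sum_{k\ge\lfloor N/2\rfloor}\epsilon_k$ of a convergent series tends to zero. Your cross-check is also valid but longer than needed: $\sum_k r_k^2\le D_\infty/c_\infty<\infty$ already gives $r_k\to0$ directly, so the monotone-sequence step is unnecessary.
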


\begin{proof}
The terminal bound in Theorem~\ref{thm:finite_step_envelope} tends to zero
because \(N^{-1/2}\to0\) and the tail of the summable sequence
\(\{\epsilon_k\}\) vanishes.
\end{proof}

\subsection{Terminal primal--dual diagnostics}
\label{app:primal_dual_diagnostic_proofs}

The previous results justify the controlled rollout as a DRS-type operator
sequence. We now show that the terminal state also retains the primal--dual
structure needed for diagnostics. The key observation is that the final cone
projection residual is a normal-cone vector. After rescaling by the last
objective-drive parameter, it gives a cone-dual-feasible diagnostic slack
for the original CLP. Exact dual stationarity is not asserted at finite
depth and is instead measured by the recovered stationarity residual.

The results in this subsection are stated for the conic setting, where
\(\mathcal K\) is a closed convex cone. For a \(K\)-step rollout, the
final transition is indexed by \(K-1\). Recall
\[
    \kappa_{K-1}
    :=
    \frac{\beta_{K-1}}{\|c\|_2+\varepsilon_c}.
\]
The terminal dual slack and equality multiplier are recovered as
\[
    s^K:=-\frac{1}{\kappa_{K-1}}u^K,
    ~~
    \lambda^K:=(AA^\top)^\dagger A(c-s^K).
\]

\begin{proposition}[Cone-side dual slack]
\label{prop:cone_side_dual_slack}
Assume that \(\mathcal K\) is a closed convex cone. For the terminal
state generated by \eqref{eq:rc_drs_block}, the recovered slack satisfies
\[
    s^K\in\mathcal K^\ast,
    ~~
    \langle z^K,s^K\rangle=0 .
\]
\end{proposition}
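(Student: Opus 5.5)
The argument rests on the variational characterization of the projection onto a closed convex cone, the Moreau decomposition. Recall that for a closed convex cone \(\mathcal K\) and any \(w\in\mathbb R^n\), the point \(p=\Proj_{\mathcal K}(w)\) satisfies
\[
    p\in\mathcal K,\qquad w-p\in-\mathcal K^\ast=\mathcal K^\circ,\qquad \langle p,\,w-p\rangle=0 .
\]
This follows from the obtuse-angle inequality \(\langle w-p,\,x-p\rangle\le0\) for all \(x\in\mathcal K\). Testing this inequality with \(x=0\) and \(x=2p\), both of which lie in \(\mathcal K\) because it is a cone, gives \(\langle w-p,p\rangle=0\). Then for a general \(x\in\mathcal K\) it gives \(\langle w-p,x\rangle\le0\), which says \(p-w\in\mathcal K^\ast\). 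I will apply this fact with \(w=\bar x^{K}+u^{K-1}\) and \(p=z^K\).

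First I identify the terminal dual variable. The last transition of \eqref{eq:rc_drs_block} uses index \(K-1\), so
\[
    z^K=\Proj_{\mathcal K}(\bar x^K+u^{K-1}),\qquad u^K=u^{K-1}+\bar x^K-z^K=(\bar x^K+u^{K-1})-z^K .
\]
Thus \(u^K=w-p\) is exactly the projection residual. This is the same identity \(u^{k+1}=v^{k+1}-z^{k+1}\) used in Lemma~\ref{lem:state_form_realization}. The identification is purely algebraic: it needs no fixed parameters, no initialization condition \(z^0=J_g(v^0)\), and holds for arbitrary admissible time-varying actions \((\alpha_k,\beta_k)\), including those produced by \textsc{RCDRS-NoEnv}. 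The Moreau fact then gives \(-u^K\in\mathcal K^\ast\) and \(\langle z^K,u^K\rangle=0\).

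Second I rescale. Since \(\beta_{K-1}>0\) and \(\|c\|_2+\varepsilon_c>0\), we have \(\kappa_{K-1}>0\). The dual cone \(\mathcal K^\ast\) is itself a cone, so it is closed under positive scaling, and therefore \(s^K=-u^K/\kappa_{K-1}\in\mathcal K^\ast\). Bilinearity of the inner product gives \(\langle z^K,s^K\rangle=-\kappa_{K-1}^{-1}\langle z^K,u^K\rangle=0\).

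The only delicate points are bookkeeping. The first is confirming that \(u^K\) is the residual of the \emph{last} projection, and not a relaxed or shifted quantity; this is why the indexing is tied to \(\beta_{K-1}\). The second is that the orthogonality relation requires \(\mathcal K\) to be a cone and not merely a convex set. The product structure of \(\mathcal K\) causes no difficulty, because the Moreau decomposition holds blockwise and \((\prod_i\mathcal K_i)^\ast=\prod_i\mathcal K_i^\ast\). Exactness of the cone projection is used essentially, since with an inexact projection the orthogonality and dual-feasibility relations would hold only approximately.
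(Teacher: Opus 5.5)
Your proposal is correct and follows essentially the same route as the paper: it identifies $u^K=(\bar x^K+u^{K-1})-z^K$ as the residual of the last cone projection and tests the projection inequality at $0$ and $2z^K$ to get $u^K\in-\mathcal K^\ast$ and $\langle z^K,u^K\rangle=0$. The paper phrases this step as $N_{\mathcal K}(z^K)=(-\mathcal K^\ast)\cap(z^K)^\perp$, and then, as you do, rescales by $\kappa_{K-1}>0$.
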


\begin{proof}
For the last transition, write
\(z^K=\Proj_{\mathcal K}(q^K)\), where
\(q^K:=\bar x^K+u^{K-1}\). The dual-correction step gives
\(u^K=u^{K-1}+\bar x^K-z^K=q^K-z^K\). Since projection residuals belong
to the normal cone, \(u^K\in N_{\mathcal K}(z^K)\).

For a closed convex cone, the normal cone satisfies
\(N_{\mathcal K}(z^K)=(-\mathcal K^\ast)\cap(z^K)^\perp\). Indeed, if
\(r\in N_{\mathcal K}(z^K)\), then using \(\xi=0\) and
\(\xi=2z^K\) in the normal-cone inequality gives
\(\langle r,z^K\rangle=0\), and then
\(\langle r,\xi\rangle\le0\) for all \(\xi\in\mathcal K\), i.e.,
\(r\in-\mathcal K^\ast\). The converse is immediate from the same
inequality. Therefore \(u^K\in-\mathcal K^\ast\) and
\(\langle z^K,u^K\rangle=0\). Since
\(s^K=-u^K/\kappa_{K-1}\) with \(\kappa_{K-1}>0\), we obtain
\(s^K\in\mathcal K^\ast\) and \(\langle z^K,s^K\rangle=0\).
\end{proof}

This proposition establishes cone dual feasibility and complementarity from
the final projection step alone. To measure stationarity, we still need an
equality multiplier. The next proposition defines the multiplier as the
least-squares fit to the stationarity equation.

\begin{proposition}[Least-squares equality multiplier]
\label{prop:least_squares_dual_recovery}
The multiplier
\[
    \lambda^K=(AA^\top)^\dagger A(c-s^K)
\]
solves
\[
    \lambda^K\in
    \argmin_{\lambda}
    \|A^\top\lambda+s^K-c\|_2^2 .
\]
\end{proposition}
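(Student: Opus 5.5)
The statement is a standard least-squares fact. The plan is to show that \(\lambda^K\) satisfies the normal equations of the problem \(\min_\lambda\|A^\top\lambda-(c-s^K)\|_2^2\). Since this objective is a convex quadratic, satisfying the normal equations is enough to conclude that \(\lambda^K\) is a minimizer.

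Write \(r:=c-s^K\). The objective \(\lambda\mapsto\|A^\top\lambda-r\|_2^2\) is convex and differentiable. Its minimizers are therefore exactly the solutions of \(AA^\top\lambda=Ar\). This linear system is always consistent, because \(Ar\in\operatorname{range}(A)=\operatorname{range}(AA^\top)\).

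Next, substitute \(\lambda^K=(AA^\top)^\dagger Ar\) into the normal equations. This gives \(AA^\top(AA^\top)^\dagger Ar\). The matrix \(AA^\top(AA^\top)^\dagger\) is the orthogonal projector onto \(\operatorname{range}(AA^\top)\). That range equals \(\operatorname{range}(A)\), and \(Ar\) lies in it, so the projector leaves \(Ar\) unchanged. Hence \(AA^\top\lambda^K=Ar\), which verifies the normal equations and places \(\lambda^K\) in the argmin.

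The one step that needs care is the range identity \(\operatorname{range}(AA^\top)=\operatorname{range}(A)\). It follows from \(\ker(AA^\top)=\ker(A^\top)\): if \(AA^\top y=0\), then \(\|A^\top y\|_2^2=y^\top AA^\top y=0\). Taking orthogonal complements of these equal kernels in \(\mathbb R^m\) gives the equal ranges.

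Optionally, one can also note that \(\lambda^K\) is the minimum-norm minimizer, since it lies in \(\operatorname{range}((AA^\top)^\dagger)=\operatorname{range}(A)\). This is not needed for the stated inclusion.
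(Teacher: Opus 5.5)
Your proposal is correct and follows the same route as the paper: both reduce the claim to the normal equations $AA^\top\lambda=A(c-s^K)$ and identify $\lambda^K$ as the Moore--Penrose solution of that system. Your version is more complete, since you explicitly check two things the paper's one-line proof assumes: that the normal equations are consistent, via $\operatorname{range}(AA^\top)=\operatorname{range}(A)$, and that $AA^\top(AA^\top)^\dagger$ acts as the identity on that range.
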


\begin{proof}
For \(\psi(\lambda):=\|A^\top\lambda+s^K-c\|_2^2\), the normal equation
is \(AA^\top\lambda=A(c-s^K)\). The Moore--Penrose solution of this
normal equation is
\(\lambda^K=(AA^\top)^\dagger A(c-s^K)\), which is a least-squares
minimizer.
\end{proof}

Combining the cone-side slack with this least-squares equality multiplier
gives the terminal readout defined in
Section~\ref{app:terminal_diagnostics}. We now prove that every
cluster point of a bounded sequence of readouts is primal--dual optimal when
the associated residuals vanish.

\begin{proof}[Proof of Theorem~\ref{thm:terminal_primal_dual_certificate}]
The first part follows from Proposition~\ref{prop:cone_side_dual_slack}:
when \(\mathcal K\) is a closed convex cone and the cone projection is
evaluated exactly, \(s^K\in\mathcal K^\ast\) and
\(\langle z^K,s^K\rangle=0\).

Now fix a CLP and consider a bounded sequence of terminal readouts
\(\{(z_j,\lambda_j,s_j)\}\) whose corresponding residuals satisfy
\(r_{{\rm p},j}\to0\), \(r_{{\rm d},j}\to0\), and
\(r_{{\rm gap},j}\to0\). Let
\((z_{j_\ell},\lambda_{j_\ell},s_{j_\ell})\to
(z^\star,\lambda^\star,s^\star)\) be any convergent subsequence.
Since \(z_{j_\ell}\in\mathcal K\) and \(\mathcal K\) is closed,
\(z^\star\in\mathcal K\). Since
\(s_{j_\ell}\in\mathcal K^\ast\) and \(\mathcal K^\ast\) is closed,
\(s^\star\in\mathcal K^\ast\). The convergence
\(r_{{\rm p},j_\ell}\to0\) gives \(Az^\star=b\), and
\(r_{{\rm d},j_\ell}\to0\) gives
\(A^\top\lambda^\star+s^\star=c\). Thus \(z^\star\) is primal feasible
and \((\lambda^\star,s^\star)\) is dual feasible.

Finally, boundedness makes the denominator of the normalized gap residual
uniformly bounded, so \(r_{{\rm gap},j_\ell}\to0\) implies
\(c^\top z^\star=b^\top\lambda^\star\). For any primal feasible \(z\)
and dual feasible \((\lambda,s)\),
\[
    c^\top z
    =
    (A^\top\lambda+s)^\top z
    =
    b^\top\lambda+s^\top z
    \ge
    b^\top\lambda,
\]
because \(z\in\mathcal K\) and \(s\in\mathcal K^\ast\). Hence weak
duality holds. Since the primal feasible \(z^\star\) and dual feasible
\((\lambda^\star,s^\star)\) attain equal objective values, both are
optimal.
\end{proof}

This completes the proof of the terminal readout theorem. The argument does
not require any individual finite-depth output to be exactly optimal; it
states that bounded terminal readouts are consistent with the KKT system as
their primal, dual, and gap residuals vanish.

\subsection{Fixed points and exact CLP optimality}
\label{app:fixed_points_clp_optimality}

The terminal readout theorem concerns approximate finite-depth terminal
states. For completeness, we also record the exact fixed-point counterpart:
if a fixed admissible DRS block converges to a fixed point, then its shadow
solution is an exact CLP optimum. This result is an auxiliary consistency
check for the fixed operator interpretation; the main finite-depth
guarantees do not require exact convergence to a fixed point.

\begin{proposition}[Fixed points produce CLP optima]
\label{prop:fixed_points_clp}
Let \(\beta>0\), \(\alpha\in(0,2)\), and suppose
\(v^\star\in\Fix(T_{\alpha,\beta})\). Define
\[
    z^\star=J_g(v^\star),
    ~~
    q^\star=R_gv^\star,
    ~~
    x^\star=J_{f_\beta}(q^\star).
\]
Suppose a standard subdifferential sum rule holds for \(f_\beta+g\), for
example
\[
    \operatorname{ri}(\operatorname{dom} f_\beta)
    \cap
    \operatorname{ri}(\operatorname{dom} g)
    \neq\emptyset .
\]
Then \(x^\star=z^\star\), and \(z^\star\) solves the original CLP.
Moreover, with
\[
    \kappa:=\frac{\beta}{\|c\|_2+\varepsilon_c},
    ~~
    u^\star:=v^\star-z^\star,
    ~~
    s^\star:=-u^\star/\kappa,
\]
there exists \(\lambda^\star\) such that
\((z^\star,\lambda^\star,s^\star)\) is primal--dual optimal.
\end{proposition}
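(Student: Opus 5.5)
The plan is to unpack the fixed-point equation, read off optimality conditions for the scaled problem, and then transfer them to the original CLP through Proposition~\ref{prop:solution_preservation} and a rescaling of the multipliers.

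\emph{Step 1: identify \(x^\star\) with \(z^\star\).} Since \(T_{\alpha,\beta}=I+\alpha(J_{f_\beta}R_g-J_g)\) and \(\alpha>0\), the condition \(v^\star\in\Fix(T_{\alpha,\beta})\) is equivalent to
\[
J_{f_\beta}(R_gv^\star)=J_g(v^\star),
\]
that is, \(x^\star=z^\star\).

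\emph{Step 2: read off the two inclusions.} By Lemma~\ref{lem:beta_resolvent}, \(z^\star=\Proj_{\mathcal K}(v^\star)\). Hence \(u^\star=v^\star-z^\star\in N_{\mathcal K}(z^\star)=\partial g(z^\star)\).

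On the other side, \(x^\star=J_{f_\beta}(q^\star)\) means \(q^\star-x^\star\in\partial f_\beta(x^\star)\). Now
\[
q^\star-x^\star=(2z^\star-v^\star)-z^\star=z^\star-v^\star=-u^\star.
\]
Adding the two inclusions gives \(0\in\partial f_\beta(z^\star)+\partial g(z^\star)\).

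\emph{Step 3: optimality for the scaled problem.} Under the stated relative-interior qualification, the subdifferential sum rule gives \(\partial f_\beta+\partial g=\partial(f_\beta+g)\). Thus \(z^\star\) minimizes \(f_\beta+g\), i.e., it minimizes \(\beta\bar c^\top x\) over \(\mathcal A\cap\mathcal K\). Since \(\mathcal A\) is affine, this sum rule is standard and holds whenever \(\operatorname{ri}\mathcal K\cap\mathcal A\neq\emptyset\). Proposition~\ref{prop:solution_preservation} then shows that \(z^\star\) solves the original CLP.

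\emph{Step 4: build the dual certificate.}
\begin{itemize}
\item Since \(\mathcal A\) is affine, \(\partial f_\beta(z^\star)=\beta\bar c+\operatorname{range}(A^\top)\). So \(-u^\star=\beta\bar c+A^\top\mu\) for some \(\mu\).
\item Using \(\beta\bar c=\kappa c\), dividing by \(\kappa>0\) gives \(s^\star=-u^\star/\kappa=c+A^\top\mu/\kappa\). Setting \(\lambda^\star:=-\mu/\kappa\) yields \(A^\top\lambda^\star+s^\star=c\).
\item The normal-cone identity from the proof of Proposition~\ref{prop:cone_side_dual_slack} gives \(u^\star\in-\mathcal K^\ast\) and \(\langle u^\star,z^\star\rangle=0\). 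Hence \(s^\star\in\mathcal K^\ast\) and \(\langle s^\star,z^\star\rangle=0\).
\item Then
\[
c^\top z^\star=(A^\top\lambda^\star+s^\star)^\top z^\star=b^\top\lambda^\star,
\]
so the gap vanishes. Weak duality, exactly as in the proof of Theorem~\ref{thm:terminal_primal_dual_certificate}, then yields primal--dual optimality.
\end{itemize}

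The main obstacle is Step 4's description of \(\partial f_\beta\). Written as \(\beta\bar c+N_{\mathcal A}(z^\star)\), the existence of \(\mu\) requires \(N_{\mathcal A}(z^\star)=\operatorname{range}(A^\top)\). This holds for any nonempty affine set, even when \(A\) is rank-deficient, since its normal cone is the orthogonal complement of \(\ker A\). The remaining steps are routine algebra.
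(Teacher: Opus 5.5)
Your proof is correct and follows the paper's argument step for step: the fixed-point relation gives \(x^\star=z^\star\), the two resolvent inclusions combine to \(0\in\partial f_\beta(z^\star)+\partial g(z^\star)\), Proposition~\ref{prop:solution_preservation} transfers optimality to the original CLP, and the multiplier is read off from \(\partial f_\beta(z^\star)=\beta\bar c+\operatorname{range}(A^\top)\) and rescaled by \(\kappa\). The differences are cosmetic: you read \(x^\star=z^\star\) directly from \(T_{\alpha,\beta}=I+\alpha(J_{f_\beta}R_g-J_g)\) rather than from \(R_{f_\beta}R_gv^\star=v^\star\), and you verify the zero gap explicitly. As in the paper, the qualification condition is used only for the inclusion \(\partial f_\beta+\partial g\subseteq\partial(f_\beta+g)\), which in fact holds without it.
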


\begin{proof}
Since \(T_{\alpha,\beta}=(1-\alpha)I+\alpha T_\beta\) and
\(\alpha>0\), the fixed-point relation
\(v^\star=T_{\alpha,\beta}v^\star\) implies
\(v^\star=T_\beta v^\star\). Hence
\(R_{f_\beta}R_gv^\star=v^\star\). By definition,
\(q^\star=R_gv^\star=2z^\star-v^\star\) and
\(R_{f_\beta}q^\star=2x^\star-q^\star=v^\star\). Combining these two
identities gives \(x^\star=z^\star\).

The resolvent identities give
\(v^\star-z^\star\in\partial g(z^\star)\) and
\(q^\star-x^\star\in\partial f_\beta(x^\star)\). Since
\(x^\star=z^\star\) and \(q^\star=2z^\star-v^\star\), we have
\(q^\star-x^\star=-(v^\star-z^\star)\). Therefore
\(0\in\partial f_\beta(x^\star)+\partial g(x^\star)\). Under the stated
sum rule, \(0\in\partial(f_\beta+g)(x^\star)\), so \(x^\star\) minimizes
\(f_\beta+g\), i.e.,
\[
    x^\star
    \in
    \arg\min_{x\in\mathcal A\cap\mathcal K}
    \beta\bar c^\top x .
\]
By Proposition~\ref{prop:solution_preservation}, \(x^\star=z^\star\)
also solves the original CLP.

It remains to construct the primal--dual certificate. Since
\(u^\star=v^\star-z^\star\) and \(z^\star=J_g(v^\star)\), the same
normal-cone argument as in Proposition~\ref{prop:cone_side_dual_slack}
gives \(u^\star\in-\mathcal K^\ast\) and
\(\langle z^\star,u^\star\rangle=0\). Hence
\(s^\star=-u^\star/\kappa\) satisfies
\(s^\star\in\mathcal K^\ast\) and
\(\langle z^\star,s^\star\rangle=0\). Also,
\(z^\star\in\mathcal A\), so \(Az^\star=b\).

Finally, from \(q^\star-x^\star\in\partial f_\beta(x^\star)\) and
\(f_\beta(x)=\beta\bar c^\top x+\mathbb I_{\mathcal A}(x)\), there
exists \(\nu^\star\) such that
\(q^\star-x^\star=\beta\bar c+A^\top\nu^\star\). Since
\(q^\star-x^\star=-u^\star\) and \(\beta\bar c=\kappa c\), we get
\(-u^\star=\kappa c+A^\top\nu^\star\). Dividing by \(\kappa\) gives
\(s^\star=c+A^\top(\nu^\star/\kappa)\). Setting
\(\lambda^\star:=-\nu^\star/\kappa\), we obtain
\(A^\top\lambda^\star+s^\star=c\). Together with
\(Az^\star=b\), \(z^\star\in\mathcal K\),
\(s^\star\in\mathcal K^\ast\), and
\(\langle z^\star,s^\star\rangle=0\), this proves primal--dual
optimality.
\end{proof}

\subsection{Terminal training objective and finite-depth solution quality}
\label{app:terminal_training_quality}

The previous subsections justify the solver dynamics and terminal
diagnostics. The remaining results explain why the training objective in
Appendix~\ref{app:training_details} targets the same finite-depth quantities.
These auxiliary facts do not strengthen the operator-theoretic guarantee;
they only connect the self-supervised loss terms to finite-depth feasibility,
objective quality, and stability.

Let \(y_{\rm rc}^K=(x_{\rm rc}^K,z_{\rm rc}^K,u_{\rm rc}^K)\) be the
terminal state of \textsc{RCDRS}, and let \(y_{\rm base}^K\) be the
terminal state of the fixed-core reference layer. Recall
\[
    \mathcal M_{\rm rc}
    =
    \lambda_{\rm eq}\mathcal L_{\rm eq}
    +
    \lambda_{\rm con}\mathcal L_{\rm con}
    +
    \lambda_{\rm mov}\mathcal L_{\rm mov}
    +
    \lambda_{\rm obj}\mathcal L_{\rm obj},
\]
and
\[
    \mathcal M_{\rm base}
    =
    \lambda_{\rm eq}\mathcal L_{\rm eq}^{\rm base}
    +
    \lambda_{\rm con}\mathcal L_{\rm con}^{\rm base}
    +
    \lambda_{\rm mov}\mathcal L_{\rm mov}^{\rm base}.
\]
The optional dominance penalty is
\[
    \mathcal L_{\rm dom}
    =
    [\mathcal M_{\rm rc}-\mathcal M_{\rm base}+m]_+,
    ~~
    m\ge0.
\]

The first auxiliary fact shows that the dominance term directly controls
whether the learned finite-depth rollout is worse than the fixed-core
reference in the terminal merit.

\begin{proposition}[Finite-depth baseline dominance]
\label{prop:finite_depth_dominance}
For any training instance and any depth \(K\), the dominance penalty
satisfies
\[
    \mathcal M_{\rm rc}
    \le
    \mathcal M_{\rm base}
    -
    m
    +
    \mathcal L_{\rm dom}.
\]
Consequently,
\[
    \mathbb E[\mathcal M_{\rm rc}]
    \le
    \mathbb E[\mathcal M_{\rm base}]
    -
    m
    +
    \mathbb E[\mathcal L_{\rm dom}].
\]
\end{proposition}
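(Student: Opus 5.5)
The inequality is essentially the defining property of the positive part, so I will argue pointwise and then take expectations. Fix a training instance and a depth \(K\), and set \(t:=\mathcal M_{\rm rc}-\mathcal M_{\rm base}+m\). By definition, \(\mathcal L_{\rm dom}=[t]_+=\max\{t,0\}\ge t\). Rearranging \(t\le\mathcal L_{\rm dom}\) gives
\[
    \mathcal M_{\rm rc}\le\mathcal M_{\rm base}-m+\mathcal L_{\rm dom},
\]
which is the first claim. The argument needs no structural information about the rollouts. In particular, it does not use the DRS interpretation of Proposition~\ref{prop:fixed_block_interpretation}, because all quantities are deterministic functions of the two terminal states \(y_{\rm rc}^K\) and \(y_{\rm base}^K\).

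For the expectation statement, I will check that each term is an integrable random variable under \(d\sim\mathcal D\), and then use monotonicity and linearity of expectation together with \(\mathbb E[m]=m\).

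The main point needing care is integrability. All loss terms are nonnegative, so \(\mathbb E[\mathcal M_{\rm rc}]\), \(\mathbb E[\mathcal M_{\rm base}]\) and \(\mathbb E[\mathcal L_{\rm dom}]\) are well defined in \([0,\infty]\).

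\begin{itemize}
\item If \(\mathbb E[\mathcal M_{\rm base}]<\infty\) and \(\mathbb E[\mathcal L_{\rm dom}]<\infty\), linearity applies to the right-hand side and monotonicity gives the bound. This is the implicit assumption under which the training objective is finite.
\item If either of these expectations is infinite, the right-hand side equals \(+\infty\) and the inequality holds trivially.
\end{itemize}

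I therefore expect no genuine obstacle beyond stating this convention.
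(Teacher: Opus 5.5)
Your proof is correct and follows the paper's argument: the paper also uses $[s]_+\ge s$ with $s=\mathcal M_{\rm rc}-\mathcal M_{\rm base}+m$, rearranges, and then takes expectations over $d\sim\mathcal D$. Your integrability remark is a harmless refinement that the paper leaves implicit: all terms are nonnegative, so the right-hand side is $+\infty$ whenever one of its expectations diverges.
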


\begin{proof}
For any scalar \(s\), \([s]_+\ge s\). Taking
\(s=\mathcal M_{\rm rc}-\mathcal M_{\rm base}+m\) gives
\[
    \mathcal L_{\rm dom}
    \ge
    \mathcal M_{\rm rc}-\mathcal M_{\rm base}+m.
\]
Rearranging yields the first inequality. Taking expectations over
\(d\sim\mathcal D\) gives the second inequality.
\end{proof}

The next auxiliary fact explains the baseline-relative objective penalty.
It shows that the learned rollout's positive objective suboptimality is
controlled by the fixed-core reference suboptimality plus the relative
objective penalty used during training.

Let \(p^\star\) denote the optimal value of the original CLP.

\begin{proposition}[Baseline-relative finite-step objective bound]
\label{prop:finite_step_objective_bound}
The terminal output satisfies
\[
    [c^\top z_{\rm rc}^K-p^\star]_+
    \le
    [c^\top z_{\rm base}^K-p^\star]_+
    +
    (1+|c^\top z_{\rm base}^K|)
    \mathcal L_{\rm obj}.
\]
\end{proposition}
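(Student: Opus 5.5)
The argument is elementary and rests on the definition of \(\mathcal L_{\rm obj}\) together with monotonicity and subadditivity of \([\cdot]_+\). Write \(B:=1+|c^\top z_{\rm base}^K|>0\). Since \([s]_+\ge s\) for every scalar \(s\), the definition of the baseline-relative penalty gives
\[
    B\,\mathcal L_{\rm obj}
    =
    \bigl[c^\top z_{\rm rc}^K-c^\top z_{\rm base}^K\bigr]_+
    \ge
    c^\top z_{\rm rc}^K-c^\top z_{\rm base}^K .
\]
Here the positive factor \(B\) is moved inside the positive part, using \(B[t]_+=[Bt]_+\) for \(B>0\).

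Next, decompose
\[
    c^\top z_{\rm rc}^K-p^\star
    =
    \bigl(c^\top z_{\rm base}^K-p^\star\bigr)
    +
    \bigl(c^\top z_{\rm rc}^K-c^\top z_{\rm base}^K\bigr).
\]
Then apply subadditivity \([a+b]_+\le[a]_++[b]_+\), which holds because the right-hand side is nonnegative and dominates \(a+b\). This yields
\[
    [c^\top z_{\rm rc}^K-p^\star]_+
    \le
    [c^\top z_{\rm base}^K-p^\star]_+
    +
    [c^\top z_{\rm rc}^K-c^\top z_{\rm base}^K]_+ .
\]
Substituting the identity \([c^\top z_{\rm rc}^K-c^\top z_{\rm base}^K]_+=B\,\mathcal L_{\rm obj}\) from the first step gives the claimed bound.

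There is no real obstacle; the only care needed is the ordering of the steps. The identity \(B\mathcal L_{\rm obj}=[c^\top z_{\rm rc}^K-c^\top z_{\rm base}^K]_+\) requires \(B>0\), which is automatic. The optimal value \(p^\star\) must be finite, which follows from the section's standing assumption that the CLP has a nonempty solution set. No feasibility of \(z^K\) is needed, so the bound holds for arbitrary finite-depth outputs.
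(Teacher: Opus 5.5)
Your proof is correct and follows essentially the same argument as the paper: rewrite $(1+|c^\top z_{\rm base}^K|)\mathcal L_{\rm obj}$ as $[c^\top z_{\rm rc}^K-c^\top z_{\rm base}^K]_+$, split $c^\top z_{\rm rc}^K-p^\star$ through the baseline objective, and apply subadditivity of $[\cdot]_+$. The preliminary inequality $[s]_+\ge s$ in your first step is never used later and can be dropped.
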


\begin{proof}
By definition,
\[
    (1+|c^\top z_{\rm base}^K|)
    \mathcal L_{\rm obj}
    =
    [c^\top z_{\rm rc}^K-c^\top z_{\rm base}^K]_+ .
\]
Since
\(c^\top z_{\rm rc}^K-p^\star
=
(c^\top z_{\rm base}^K-p^\star)
+
(c^\top z_{\rm rc}^K-c^\top z_{\rm base}^K)\), the inequality
\([a+b]_+\le[a]_+ + [b]_+\) gives the result.
\end{proof}

The previous two propositions compare the learned terminal state with a
fixed-core reference. To relate the terminal residual and objective terms to
distance from the solution set, we invoke a standard local error-bound
condition on the bounded region reached by the solver layer.

Let
\[
    \mathcal X^\star
    :=
    \arg\min_{x\in\mathcal A\cap\mathcal K} c^\top x .
\]

\begin{assumption}[Terminal error bound]
\label{ass:terminal_error_bound}
On the bounded terminal region reached by the solver layer, there exists
\(\kappa_{\rm eb}>0\) such that, for every terminal pair \((x,z)\) with
\(x\in\mathcal A\) and \(z\in\mathcal K\),
\[
    \dist(z,\mathcal X^\star)
    \le
    \kappa_{\rm eb}
    \left(
    \frac{\|Az-b\|_2}{1+\|b\|_2}
    +
    \frac{\|x-z\|_2}{1+\|z\|_2}
    +
    [c^\top z-p^\star]_+
    \right).
\]
\end{assumption}

Under this local error-bound condition, the residual terms and the
baseline-relative objective term together imply a finite-depth
distance-to-solution bound.

\begin{corollary}[Finite-step distance-to-solution bound]
\label{cor:finite_step_distance_bound}
Under Assumption~\ref{ass:terminal_error_bound},
\[
    \dist(z_{\rm rc}^K,\mathcal X^\star)
    \le
    \kappa_{\rm eb}
    \Big(
    \sqrt{\mathcal L_{\rm eq}}
    +
    \sqrt{\mathcal L_{\rm con}}
    +
    [c^\top z_{\rm rc}^K-p^\star]_+
    \Big).
\]
Combining this with Proposition~\ref{prop:finite_step_objective_bound}
gives
\[
\begin{aligned}
    \dist(z_{\rm rc}^K,\mathcal X^\star)
    \le
    \kappa_{\rm eb}
    \Big(
    &\sqrt{\mathcal L_{\rm eq}}
    +
    \sqrt{\mathcal L_{\rm con}}
    +
    [c^\top z_{\rm base}^K-p^\star]_+  \\
    &+
    (1+|c^\top z_{\rm base}^K|)
    \mathcal L_{\rm obj}
    \Big).
\end{aligned}
\]
\end{corollary}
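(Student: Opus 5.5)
The plan is to substitute the terminal loss definitions into Assumption~\ref{ass:terminal_error_bound} and then chain the result with Proposition~\ref{prop:finite_step_objective_bound}.

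\emph{Checking the hypotheses.} The assumption requires \(x\in\mathcal A\) and \(z\in\mathcal K\) for the terminal pair. For the pair \((x_{\rm rc}^K,z_{\rm rc}^K)\) this holds by construction of~\eqref{eq:rc_drs_block}: \(x_{\rm rc}^K=\Proj_{\mathcal A}(w^{K-1})\in\mathcal A\) and \(z_{\rm rc}^K=\Proj_{\mathcal K}(\cdot)\in\mathcal K\). The pair also lies in the terminal region by hypothesis, so the assumption applies to \((x,z)=(x_{\rm rc}^K,z_{\rm rc}^K)\).

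\emph{First bound.} Next we rewrite the two residual terms on the right-hand side of the assumption using the loss definitions. Since
\[
\mathcal L_{\rm eq}=\|Az_{\rm rc}^K-b\|_2^2/(1+\|b\|_2)^2
\]
is the square of a nonnegative quantity, its square root equals exactly \(\|Az_{\rm rc}^K-b\|_2/(1+\|b\|_2)\). Likewise, \(\sqrt{\mathcal L_{\rm con}}=\|x_{\rm rc}^K-z_{\rm rc}^K\|_2/(1+\|z_{\rm rc}^K\|_2)\). Substituting these identities into the assumption gives the first displayed inequality, in fact with equality inside the bracket.

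\emph{Second bound.} We bound the remaining term \([c^\top z_{\rm rc}^K-p^\star]_+\) by Proposition~\ref{prop:finite_step_objective_bound}:
\[
[c^\top z_{\rm rc}^K-p^\star]_+\le[c^\top z_{\rm base}^K-p^\star]_+ +(1+|c^\top z_{\rm base}^K|)\mathcal L_{\rm obj}.
\]
Because \(\kappa_{\rm eb}>0\), multiplying preserves the inequality, which yields the second display.

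\emph{Main point of care.} There is no real obstacle. The only step needing attention is confirming that the terminal pair meets the hypotheses of Assumption~\ref{ass:terminal_error_bound}, namely membership in \(\mathcal A\) and in \(\mathcal K\), and this is immediate from the projection structure (assuming exact projections).
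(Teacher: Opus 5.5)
Your proposal is correct and is essentially the paper's proof. Like the paper, you apply Assumption~\ref{ass:terminal_error_bound} at $(x_{\rm rc}^K,z_{\rm rc}^K)$, identify the two normalized residuals with $\sqrt{\mathcal L_{\rm eq}}$ and $\sqrt{\mathcal L_{\rm con}}$, and then insert Proposition~\ref{prop:finite_step_objective_bound}; your explicit check that $x_{\rm rc}^K\in\mathcal A$ and $z_{\rm rc}^K\in\mathcal K$ (for $K\ge1$ with exact projections) is a detail the paper leaves implicit.
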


\begin{proof}
Apply Assumption~\ref{ass:terminal_error_bound} with
\(x=x_{\rm rc}^K\) and \(z=z_{\rm rc}^K\). By definition,
\[
    \sqrt{\mathcal L_{\rm eq}}
    =
    \frac{\|Az_{\rm rc}^K-b\|_2}{1+\|b\|_2},
    ~~
    \sqrt{\mathcal L_{\rm con}}
    =
    \frac{\|x_{\rm rc}^K-z_{\rm rc}^K\|_2}
    {1+\|z_{\rm rc}^K\|_2}.
\]
Substituting these identities into
Assumption~\ref{ass:terminal_error_bound} gives the first inequality.
Substituting the objective bound from
Proposition~\ref{prop:finite_step_objective_bound} gives the second.
\end{proof}

\section{Supplementary Experimental Results}
\label{app:experimental_details}

This section provides the complete fixed-depth protocol, benchmark
construction, solver-layer implementation details, metric conventions,
additional synthetic results, full ablations, DFL results at omitted depths,
and the forecast-aware dispatch formulation.

\subsection{General Fixed-Depth Protocol}
\label{app:general_protocol}

All experiments use a prescribed unrolling depth. For each depth \(K\), every
learned layer is trained at that same depth and every compared method is
unfolded for exactly \(K\) iterations from the same initial state. The solver
is not continued to convergence, and only the terminal output is evaluated.
This prevents a controller trained for one horizon from being compared with a
fixed baseline tuned for another horizon.

Training, validation, and test instances are generated independently. Model
hyperparameters, fixed-core controls, and learned checkpoints are selected
using validation data only. Reference primal solutions or long-run solver
outputs are used to compute validation metrics for model selection and test
metrics for final reporting; they are never used as supervised controller
labels. In particular, test-set metrics are not used for checkpoint or
hyperparameter selection.

\begin{table*}[ht]
\centering
\scriptsize
\setlength{\tabcolsep}{3.6pt}
\renewcommand{\arraystretch}{1.04}
\caption{Common fixed-depth protocol used by the reported experiments. The
ablation runner supports larger sweeps, but the tables in this supplement use
the same three seeds as the main paper.}
\label{tab:app_global_protocol}
\resizebox{\textwidth}{!}{%
\begin{tabular}{lccccc}
\toprule
Experiment & Depths \(K\) & Train/validation/test & Batch & Seeds & Reference depth \\
\midrule
Synthetic conic optimization
& \(\{5,10,15,20\}\) & \(2000/400/400\) & \(1024\)
& \(\{0,1,2\}\) & exact KKT reference \\
Ablations reported here
& \(10\) & \(2000/400/400\) & \(1024\)
& \(\{0,1,2\}\) & exact KKT reference \\
Decision-focused learning
& \(\{5,10,15,20\}\) & \(5000/1500/1500\) & \(2500\)
& \(\{0,1,2\}\) & exact KKT reference \\
Forecast-aware dispatch
& \(\{5,10,15\}\) & \(1536/384/512\) & \(512\)
& \(\{0,1,2\}\) & \(K_{\rm ref}=300\) \\
\bottomrule
\end{tabular}}
\end{table*}

Unless otherwise stated, a metric is first averaged over test instances for
each seed and is then reported as mean\(\pm\)standard deviation across the
three seeds. Online runtime is reported in milliseconds per instance. It
includes feature construction, controller evaluation, parameter mapping,
affine projection, cone or box projection, relaxation, and the scaled-dual
update. Offline training, validation tuning, dataset generation, and reference
solver calls are excluded.

\subsection{Synthetic Conic Benchmark Construction}
\label{app:synthetic_generation}

The synthetic benchmarks use conic linear programs
\[
    \min_{x\in\mathbb R^n} c^\top x
    ~~ \mathrm{s.t.}~~ Ax=b,~~ x\in\mathcal K .
\]
Three families are used. The QP-lift family represents separable quadratic
epigraphs with three-dimensional rotated second-order-cone blocks. The SOCP
family contains a product of standard SOC blocks. The mixed-cone family uses
\[
    \mathcal K
    =
    \mathbb R_+^{n_+}
    \times
    \prod_{j=1}^{J_q}\mathcal Q_{q_j}
    \times
    \prod_{j=1}^{J_r}\mathcal Q^r_{3}
    \times
    \mathbb S_+^{p}.
\]
The exact dimensions are listed in Table~\ref{tab:app_benchmark_case_summary}.
For a mixed-cone instance,
\(n=n_+ + J_q q + 3J_r+p^2\). The PSD block is represented by a flattened
\(p\times p\) symmetric matrix in the implementation.

\begin{table*}[ht]
\centering
\scriptsize
\setlength{\tabcolsep}{3.2pt}
\renewcommand{\arraystretch}{1.03}
\caption{Exact synthetic benchmark dimensions. The target condition number is
implemented by geometrically spaced singular values of the full-row-rank
equality matrix.}
\label{tab:app_benchmark_case_summary}
\resizebox{\textwidth}{!}{%
\begin{tabular}{lllrrr}
\toprule
Family & Scale & Cone specification & \(n\) & \(m\) & target \(\kappa(A)\) \\
\midrule
QP-lift & small  & \(d=32\) rotated-SOC epigraph blocks  & 96  & 44  & 20 \\
QP-lift & medium & \(d=64\) rotated-SOC epigraph blocks  & 192 & 88  & 80 \\
QP-lift & hard   & \(d=96\) rotated-SOC epigraph blocks  & 288 & 136 & 300 \\
SOCP & small  & \(8\) SOC blocks of dimension \(8\)  & 64  & 32  & 20 \\
SOCP & medium & \(16\) SOC blocks of dimension \(8\) & 128 & 64  & 80 \\
SOCP & hard   & \(20\) SOC blocks of dimension \(8\) & 160 & 128 & 200 \\
Mixed & small  & \(n_+=24,\;J_q=4,q=6,\;J_r=12,\;p=6\)  & 120 & 48  & 40 \\
Mixed & medium & \(n_+=48,\;J_q=8,q=8,\;J_r=24,\;p=8\)  & 248 & 96  & 100 \\
Mixed & hard   & \(n_+=72,\;J_q=12,q=8,\;J_r=36,\;p=10\) & 376 & 160 & 250 \\
\bottomrule
\end{tabular}}
\end{table*}

For each problem family and scale, \(A\) is generated from independent QR
factors and singular values geometrically spaced between \(1\) and
\(1/\kappa(A)\). Each instance is then generated in a KKT-consistent manner.
We sample a primal point \(x^\star\in\mathcal K\), a complementary slack
\(s^\star\in\mathcal K^\ast\) satisfying
\(\langle x^\star,s^\star\rangle=0\), and an equality multiplier
\(\lambda^\star\). We set
\[
    b=Ax^\star,
    ~~
    c=A^\top\lambda^\star+s^\star .
\]
Thus \((x^\star,\lambda^\star,s^\star)\) satisfies the conic KKT system and
provides an exact reference for evaluation. The reference is not supplied to
the controller training loss.

\subsection{Compared Solver Layers}
\label{app:solver_details}

All methods use the same affine and cone projections, initial state, data
splits, and evaluation depth whenever their algorithmic structure permits.
The paper names below correspond to the implementation names in the released
code.

\paragraph{Fixed DRS.}
\textsc{F-DRS} uses the fixed transition in~\eqref{eq:rc_drs_block} with
\((\alpha,\beta)=(1.6,0.3)\) on the synthetic benchmarks.

\paragraph{Validation-tuned DRS.}
\textsc{Grid-DRS} selects a fixed pair independently for every problem family,
scale, seed, and depth. The effective search grid is
\[
  \alpha\in\{1.0,1.3,1.6,1.8\},
  ~~
  \beta\in\{0.03,0.1,0.3,1.0,3.0\}.
\]
At most 128 validation instances are scored using
\(\mathrm{Gap}+10\,\mathrm{Eq}+10\,\mathrm{Cone}\). Because the auxiliary
\(\rho\)-coordinate does not enter the fixed DRS transition, only
\((\alpha,\beta)\) determine the selected numerical core.

\paragraph{Spectral adaptive splitting.}
\textsc{S-ADMM} uses a safeguarded Barzilai--Borwein-style scale estimate. The
internal scale is restricted to change by at most a factor of \(2\) per
iteration, and the effective objective drive is adjusted inversely as
\(\beta_k=\beta_0\rho_0/\rho_k\), followed by clipping to the global
admissible interval.

\paragraph{Anderson-accelerated DRS.}
\textsc{AA-DRS} extrapolates the primal and scaled-dual states with coefficient
\(\omega=0.25\). The extrapolated state is accepted only when its residual
monitor is no larger than \(1.05\) times that of the unaccelerated DRS step;
the extrapolated primal state is projected back onto the cone.

\paragraph{Stable learned PDHG.}
\textsc{L-PDHG} is a trainable primal--dual hybrid-gradient baseline with
positive step sizes constrained by a conservative stability condition. The
implementation uses safety factor \(0.95\), limits extrapolation to
\(\theta\le0.8\), and clips learned positive step scales to
\([10^{-4},10]\).

\paragraph{Layerwise DRS.}
\textsc{Layerwise-DRS} learns an instance-independent open-loop sequence of
DRS controls at the prescribed depth. It separates the benefit of learning a
finite-horizon schedule from the benefit of trajectory feedback.

\paragraph{Residual-controlled DRS.}
\textsc{RCDRS-Env} and \textsc{RCDRS-NoEnv} use the same GRU controller and
feature interface. The enveloped variant is centered at the
validation-selected fixed core and uses the decaying safeguard in
\eqref{eq:envelope_param}; the no-envelope variant maps directly to the global
admissible ranges. On the synthetic benchmarks both variants use the
multiplicative growth filter for the positive scale coordinates.

\subsection{RCDRS Controller and Training Details}
\label{app:rc_training}

The synthetic controller uses the ten-dimensional feature vector defined in
Section~\ref{app:controller_details}, a single-layer GRU with hidden width
64, and a two-layer control head. The initial splitting variables are
\(z^0=u^0=0\). All computations use PyTorch float32. The controller is trained
separately for each problem family, scale, seed, and terminal depth \(K\).
AdamW is used with the settings in Table~\ref{tab:app_rc_hyperparameters}; the
validation checkpoint with the lowest terminal merit is retained.

\begin{table*}[ht]
\centering
\scriptsize
\setlength{\tabcolsep}{4.0pt}
\renewcommand{\arraystretch}{1.04}
\caption{Synthetic RCDRS training and control hyperparameters.}
\label{tab:app_rc_hyperparameters}
\resizebox{\textwidth}{!}{%
\begin{tabular}{lll}
\toprule
Group & Setting & Value \\
\midrule
Controller & encoder / hidden width / feature dimension & GRU / 64 / 10 \\
Optimization & epochs / optimizer / learning rate & 100 / AdamW / \(10^{-3}\) \\
Optimization & weight decay / gradient clipping & \(10^{-5}\) / \(5.0\) \\
Base action & \((\rho_0,\alpha_0,\beta_0)\) before validation recentering
& \((1.0,1.6,0.3)\) \\
Global ranges & \(\rho\), \(\alpha\), \(\beta\)
& \([10^{-4},10^4]\), \([0.2,1.9]\), \([10^{-5},10^2]\) \\
Envelope & \(\delta_0,k_0,p,s_\alpha\)
& \(2.0,80,1.20,0.25\) \\
Growth filter & \(\chi_\rho,\chi_\beta\) & \(10,10\) \\
Terminal loss & \((\lambda_{\rm eq},\lambda_{\rm con},\lambda_{\rm mov},
\lambda_{\rm obj})\) & \((10,10,0.1,1)\) \\
Regularization & \(\lambda_{\rm dom},m,\lambda_{\rm smooth}\)
& \(0.2,0,10^{-3}\) \\
Runtime & warm-up / timed repeats & 1 / 3 \\
\bottomrule
\end{tabular}}
\end{table*}

The loss is the terminal self-supervised objective in
Section~\ref{app:training_details}. No optimal primal solution, optimal value,
or KKT multiplier is supplied as a label. The fixed-core reference is evaluated
at the same depth and supplies only the baseline-relative objective and
dominance terms. For the ablation tables, one component is changed at a time
while the data, initialization, training budget, and validation protocol remain
fixed. The reported ablation tables use \(K=10\) and seeds \(\{0,1,2\}\). The
\(\rho_k\) coordinate remains controller-side memory; the numerical transition
is changed only by \(\alpha_k\) and \(\beta_k\).

\subsection{Metric Conventions}
\label{app:metrics}

Let \(p^\star=c^\top x^\star\). The certificate-oriented synthetic metrics are
\[
\begin{aligned}
\mathrm{ObjErr}
&:=\frac{|c^\top z^K-p^\star|}{1+|p^\star|},
&
 r_{\rm p}^K
&:=\frac{\|Az^K-b\|_2}{1+\|b\|_2},\\
 r_{\rm d}^K
&:=\frac{\|A^\top\lambda^K+s^K-c\|_2}{1+\|c\|_2},
&
 r_{\rm gap}^K
&:=\frac{|c^\top z^K-b^\top\lambda^K|}
 {1+|c^\top z^K|+|b^\top\lambda^K|}.
\end{aligned}
\]
Cone dual feasibility and complementarity of the diagnostic slack follow from
the final cone projection, whereas \(r_{\rm d}^K\) measures the remaining
least-squares stationarity error.

For the broader coverage sweep we additionally report
\[
\begin{aligned}
\mathrm{Gap}
&:=\left[
\frac{c^\top z^K-c^\top x^\star}{1+|c^\top x^\star|}
\right]_+,
&
\mathrm{Eq}
&:=\frac{\|Az^K-b\|_2}{1+\|b\|_2},\\
\mathrm{Cone}
&:=\frac{\|z^K-\Proj_{\mathcal K}(z^K)\|_2}{1+\|z^K\|_2},
&
\mathrm{Dist}
&:=\frac{\|z^K-x^\star\|_2}{1+\|x^\star\|_2}.
\end{aligned}
\]
The positive gap prevents an infeasible iterate with an artificially favorable
objective from being interpreted as an improvement; it must still be read
jointly with feasibility. Runtime columns report milliseconds per instance.
Each table entry is the mean test-set metric for one seed, aggregated as
mean\(\pm\)standard deviation over seeds.

\subsection{Additional Results}
\label{app:additional_synthetic_results}

The main text reports the representative hard SOCP and hard mixed-cone
certificate-based results at \(K=20\). Table~\ref{tab:app_synthetic_k5}
reports the omitted shallow-depth \(K=5\) rows for the same two hard
benchmarks. These results show that the residual-controlled layer is already
competitive under very small unrolling budgets.

\begin{table*}[ht]
\centering
\scriptsize
\setlength{\tabcolsep}{2.6pt}
\renewcommand{\arraystretch}{0.96}
\caption{Additional fixed-budget conic optimization results at \(K=5\).
Values are mean\(\pm\)standard deviation over three random seeds. Lower is
better.}
\label{tab:app_synthetic_k5}
\resizebox{\textwidth}{!}{
\begin{tabular}{llccccc}
\toprule
Benchmark & Method
& \(\mathrm{ObjErr}\downarrow\)
& \(r_{\rm p}\downarrow\)
& \(r_{\rm d}\downarrow\)
& \(r_{\rm gap}\downarrow\)
& \(\mathrm{Time~(ms)}\downarrow\) \\
\midrule

\multirow{8}{*}{SOCP}
& \textsc{F-DRS}
& \(7.00\mathrm{e}{-1}\!\pm\!8.48\mathrm{e}{-2}\)
& \(6.78\mathrm{e}{-2}\!\pm\!1.27\mathrm{e}{-3}\)
& \(8.17\mathrm{e}{-1}\!\pm\!1.61\mathrm{e}{-2}\)
& \(4.55\mathrm{e}{-1}\!\pm\!1.54\mathrm{e}{-2}\)
& \(0.018\!\pm\!0.007\) \\
& \textsc{Grid-DRS}
& \(1.34\mathrm{e}{-1}\!\pm\!1.19\mathrm{e}{-2}\)
& \(6.12\mathrm{e}{-2}\!\pm\!1.76\mathrm{e}{-3}\)
& \(9.79\mathrm{e}{-2}\!\pm\!1.92\mathrm{e}{-3}\)
& \(1.58\mathrm{e}{-1}\!\pm\!3.86\mathrm{e}{-3}\)
& \(0.017\!\pm\!0.008\) \\
& \textsc{S-ADMM}
& \(1.52\mathrm{e}{-1}\!\pm\!3.02\mathrm{e}{-2}\)
& \(6.72\mathrm{e}{-2}\!\pm\!3.72\mathrm{e}{-3}\)
& \(6.76\mathrm{e}{-2}\!\pm\!1.56\mathrm{e}{-2}\)
& \(2.43\mathrm{e}{-1}\!\pm\!6.67\mathrm{e}{-2}\)
& \(0.029\!\pm\!0.011\) \\
& \textsc{AA-DRS}
& \(1.24\mathrm{e}{-1}\!\pm\!6.19\mathrm{e}{-3}\)
& \(2.98\mathrm{e}{-2}\!\pm\!1.61\mathrm{e}{-3}\)
& \(8.75\mathrm{e}{-2}\!\pm\!1.16\mathrm{e}{-3}\)
& \(3.64\mathrm{e}{-1}\!\pm\!6.78\mathrm{e}{-3}\)
& \(0.057\!\pm\!0.020\) \\
& \textsc{L-PDHG}
& \(8.73\mathrm{e}{-1}\!\pm\!2.93\mathrm{e}{-2}\)
& \(1.42\mathrm{e}{-1}\!\pm\!9.53\mathrm{e}{-3}\)
& \(1.04\mathrm{e}{0}\!\pm\!4.29\mathrm{e}{-2}\)
& \(5.78\mathrm{e}{-1}\!\pm\!2.75\mathrm{e}{-2}\)
& \(0.073\!\pm\!0.010\) \\
& \textsc{Layerwise-DRS}
& \(8.61\mathrm{e}{-2}\!\pm\!1.40\mathrm{e}{-2}\)
& \(3.82\mathrm{e}{-2}\!\pm\!5.23\mathrm{e}{-3}\)
& \(6.60\mathrm{e}{-2}\!\pm\!7.44\mathrm{e}{-3}\)
& \(1.29\mathrm{e}{-1}\!\pm\!1.21\mathrm{e}{-2}\)
& \(0.041\!\pm\!0.008\) \\
& \textsc{RCDRS-Env}
& \(\mathbf{3.61\mathrm{e}{-2}\!\pm\!1.08\mathrm{e}{-3}}\)
& \(\mathbf{2.69\mathrm{e}{-2}\!\pm\!1.85\mathrm{e}{-3}}\)
& \(\mathbf{2.25\mathrm{e}{-2}\!\pm\!1.37\mathrm{e}{-3}}\)
& \(\mathbf{5.80\mathrm{e}{-2}\!\pm\!5.67\mathrm{e}{-3}}\)
& \(0.050\!\pm\!0.009\) \\
& \textsc{RCDRS-NoEnv}
& \(3.61\mathrm{e}{-2}\!\pm\!1.21\mathrm{e}{-3}\)
& \(2.69\mathrm{e}{-2}\!\pm\!1.91\mathrm{e}{-3}\)
& \(2.27\mathrm{e}{-2}\!\pm\!1.50\mathrm{e}{-3}\)
& \(5.84\mathrm{e}{-2}\!\pm\!5.37\mathrm{e}{-3}\)
& \(0.044\!\pm\!0.009\) \\

\midrule

\multirow{8}{*}{Mixed cone}
& \textsc{F-DRS}
& \(2.20\mathrm{e}{0}\!\pm\!6.54\mathrm{e}{-2}\)
& \(5.58\mathrm{e}{-2}\!\pm\!3.36\mathrm{e}{-3}\)
& \(3.99\mathrm{e}{-1}\!\pm\!1.86\mathrm{e}{-2}\)
& \(6.58\mathrm{e}{-1}\!\pm\!5.44\mathrm{e}{-2}\)
& \(0.083\!\pm\!0.032\) \\
& \textsc{Grid-DRS}
& \(4.61\mathrm{e}{-1}\!\pm\!6.96\mathrm{e}{-3}\)
& \(5.24\mathrm{e}{-2}\!\pm\!4.17\mathrm{e}{-3}\)
& \(1.30\mathrm{e}{-1}\!\pm\!2.91\mathrm{e}{-3}\)
& \(2.06\mathrm{e}{-1}\!\pm\!6.27\mathrm{e}{-3}\)
& \(0.133\!\pm\!0.100\) \\
& \textsc{S-ADMM}
& \(4.10\mathrm{e}{-1}\!\pm\!1.21\mathrm{e}{-2}\)
& \(6.90\mathrm{e}{-2}\!\pm\!4.35\mathrm{e}{-3}\)
& \(1.19\mathrm{e}{-1}\!\pm\!3.99\mathrm{e}{-3}\)
& \(1.33\mathrm{e}{-1}\!\pm\!2.17\mathrm{e}{-2}\)
& \(0.101\!\pm\!0.026\) \\
& \textsc{AA-DRS}
& \(4.82\mathrm{e}{-1}\!\pm\!4.11\mathrm{e}{-3}\)
& \(4.68\mathrm{e}{-2}\!\pm\!3.12\mathrm{e}{-3}\)
& \(1.21\mathrm{e}{-1}\!\pm\!1.90\mathrm{e}{-3}\)
& \(3.71\mathrm{e}{-1}\!\pm\!9.81\mathrm{e}{-3}\)
& \(0.288\!\pm\!0.062\) \\
& \textsc{L-PDHG}
& \(7.59\mathrm{e}{-1}\!\pm\!2.24\mathrm{e}{-2}\)
& \(1.47\mathrm{e}{-1}\!\pm\!7.58\mathrm{e}{-3}\)
& \(9.15\mathrm{e}{-1}\!\pm\!2.37\mathrm{e}{-2}\)
& \(4.81\mathrm{e}{-1}\!\pm\!1.13\mathrm{e}{-2}\)
& \(0.125\!\pm\!0.046\) \\
& \textsc{Layerwise-DRS}
& \(3.74\mathrm{e}{-1}\!\pm\!2.43\mathrm{e}{-2}\)
& \(4.64\mathrm{e}{-2}\!\pm\!4.09\mathrm{e}{-3}\)
& \(1.08\mathrm{e}{-1}\!\pm\!4.36\mathrm{e}{-3}\)
& \(1.16\mathrm{e}{-1}\!\pm\!8.57\mathrm{e}{-3}\)
& \(0.117\!\pm\!0.021\) \\
& \textsc{RCDRS-Env}
& \(3.14\mathrm{e}{-1}\!\pm\!6.27\mathrm{e}{-2}\)
& \(\mathbf{4.57\mathrm{e}{-2}\!\pm\!4.61\mathrm{e}{-3}}\)
& \(9.42\mathrm{e}{-2}\!\pm\!1.48\mathrm{e}{-2}\)
& \(1.06\mathrm{e}{-1}\!\pm\!3.93\mathrm{e}{-3}\)
& \(0.121\!\pm\!0.021\) \\
& \textsc{RCDRS-NoEnv}
& \(\mathbf{2.94\mathrm{e}{-1}\!\pm\!4.59\mathrm{e}{-2}}\)
& \(4.61\mathrm{e}{-2}\!\pm\!4.70\mathrm{e}{-3}\)
& \(\mathbf{9.02\mathrm{e}{-2}\!\pm\!1.12\mathrm{e}{-2}}\)
& \(\mathbf{1.02\mathrm{e}{-1}\!\pm\!3.15\mathrm{e}{-3}}\)
& \(0.127\!\pm\!0.033\) \\
\bottomrule
\end{tabular}}
\end{table*}

The broader coverage sweep evaluates the three problem families, three
scales, and four depths \(K\in\{5,10,15,20\}\), giving 36
family--scale--depth settings. The row-level table is intentionally omitted
from the PDF because it duplicates the released machine-readable results.
Table~\ref{tab:app_positive_gap_summary} reports only non-overlapping aggregate
statistics. A ``win'' is determined from the seed-averaged metric within each
setting; the gap ratio divides the better RCDRS variant by the strongest
non-RCDRS baseline in that setting.

\begin{table*}[ht]
\centering
\scriptsize
\setlength{\tabcolsep}{4pt}
\renewcommand{\arraystretch}{1.05}
\caption{Aggregate positive-gap coverage over the omitted 36-setting row-level
results. The corresponding CSV and aggregation routine are included in the
companion code archive.}
\label{tab:app_positive_gap_summary}
\begin{tabular}{lcc}
\toprule
Summary item & Value & Interpretation \\
\midrule
Best \textsc{RCDRS} gap wins & \(36/36\) & all family--scale--depth settings \\
\textsc{RCDRS-Env} gap wins & \(30/36\) & most consistent objective-gap variant \\
\textsc{RCDRS-Env} distance wins & \(32/36\) & most consistent solution-distance variant \\
Mean gap ratio & \(33.1\%\) & best \textsc{RCDRS} vs. strongest classical baseline \\
Median gap ratio & \(26.1\%\) & best \textsc{RCDRS} vs. strongest classical baseline \\
Hard QP average gap ratio & \(14.0\%\) & hard-scale QP-lift settings \\
Hard SOCP average gap ratio & \(25.4\%\) & hard-scale SOCP settings \\
Mixed-hard gap ratio, \(K=5\to20\) & \(47.6\%\to3.5\%\) & stronger advantage at deeper finite budgets \\
Mixed-hard distance ratio, \(K=5\to20\) & \(82.0\%\to21.9\%\) & improved terminal solution distance \\
\bottomrule
\end{tabular}
\end{table*}

\subsection{Full Ablation Results}
\label{app:full_ablation_results}

The main text reports a compact representative ablation summary. This
subsection provides the full ablation Table~\ref{tab:main_ablation_summary} for controller architecture,
controlled transition variables, controller input features, and terminal
training loss terms.

\FloatBarrier
\begin{table*}[!t]
\centering
\scriptsize
\setlength{\tabcolsep}{2.4pt}
\renewcommand{\arraystretch}{0.90}
\caption{Main ablation summary at \(K=10\). Values are
mean\(\pm\)standard deviation over three random seeds.}
\label{tab:main_ablation_summary}
\begin{tabular}{llcccc}
\toprule
Group & Variant
& \multicolumn{2}{c}{SOCP}
& \multicolumn{2}{c}{Mixed cone} \\
\cmidrule(lr){3-4}
\cmidrule(lr){5-6}
& & \(\mathrm{ObjErr}\downarrow\) & \(r_{\rm p}\downarrow\)
& \(\mathrm{ObjErr}\downarrow\) & \(r_{\rm p}\downarrow\) \\
\midrule

\multirow{3}{*}{Controller}
& Layerwise schedule
& \(0.01457\pm0.00033\)
& \(0.01092\pm0.00050\)
& \(0.10324\pm0.00053\)
& \(0.02015\pm0.00105\) \\

& MLP-current
& \(0.00604\pm0.00034\)
& \(0.00554\pm0.00017\)
& \(0.02552\pm0.00153\)
& \(0.01866\pm0.00117\) \\

& LSTM
& \(0.00426\pm0.00021\)
& \(0.00565\pm0.00017\)
& \(0.02538\pm0.00188\)
& \(0.01883\pm0.00145\) \\

& GRU full
& \(0.00413\pm0.00033\)
& \(0.00553\pm0.00008\)
& \(0.01564\pm0.00173\)
& \(0.01863\pm0.00121\) \\

\midrule

\multirow{4}{*}{Control}
& Fixed core
& \(0.08661\pm0.00105\)
& \(0.01376\pm0.00088\)
& \(0.11907\pm0.00110\)
& \(0.02338\pm0.00129\) \\

& \(\alpha\)-only
& \(0.02648\pm0.00023\)
& \(0.01303\pm0.00078\)
& \(0.02561\pm0.00167\)
& \(0.01980\pm0.00115\) \\

& \(\beta\)-only
& \(0.00459\pm0.00023\)
& \(0.00601\pm0.00018\)
& \(0.01991\pm0.00091\)
& \(0.01968\pm0.00123\) \\

& \(\alpha+\beta\)
& \(0.00413\pm0.00032\)
& \(0.00553\pm0.00010\)
& \(0.01563\pm0.00074\)
& \(0.01863\pm0.00122\) \\

& Full \(\alpha+\beta+\rho\)
& \(0.00401\pm0.00062\)
& \(0.00417\pm0.00008\)
& \(0.01520\pm0.00167\)
& \(0.01795\pm0.00109\) \\
\midrule

\multirow{4}{*}{Feature}
& Residual-only
& \(0.00525\pm0.00027\)
& \(0.00565\pm0.00014\)
& \(0.02844\pm0.00175\)
& \(0.01882\pm0.00123\) \\

& No time
& \(0.00449\pm0.00022\)
& \(0.00555\pm0.00017\)
& \(0.02547\pm0.00170\)
& \(0.01870\pm0.00126\) \\

& No previous action
& \(0.00417\pm0.00036\)
& \(0.00555\pm0.00004\)
& \(0.02533\pm0.00165\)
& \(0.01864\pm0.00123\) \\

& No \(\rho\)-memory
& \(0.00513\pm0.00023\)
& \(0.00553\pm0.000165\)
& \(0.02452\pm0.00165\)
& \(0.01886\pm0.00133\) \\
 
& Full
& \(0.00413\pm0.00033\)
& \(0.00553\pm0.00008\)
& \(0.01564\pm0.00167\)
& \(0.01863\pm0.00120\) \\

\midrule

\multirow{4}{*}{Loss}
& Full loss
& \(0.00413\pm0.00033\)
& \(0.00553\pm0.00008\)
& \(0.01564\pm0.00173\)
& \(0.01863\pm0.00121\) \\

& No consistency term
& \(0.00399\pm0.00037\)
& \(0.01001\pm0.00008\)
& \(0.04599\pm0.01341\)
& \(0.02154\pm0.00083\) \\

& No objective term
& \(0.00507\pm0.00034\)
& \(0.00553\pm0.00008\)
& \(0.03514\pm0.00213\)
& \(0.01861\pm0.00121\) \\

& No dominance term
& \(0.00416\pm0.00028\)
& \(0.00547\pm0.00012\)
& \(0.03522\pm0.00160\)
& \(0.01862\pm0.00122\) \\

& No movement term
& \(0.00513\pm0.00039\)
& \(0.00553\pm0.00008\)
& \(0.02554\pm0.00162\)
& \(0.01862\pm0.00121\) \\

& No smoothness term
& \(0.00492\pm0.00041\)
& \(0.00553\pm0.00009\)
& \(0.02604\pm0.00170\)
& \(0.01863\pm0.00120\) \\

\bottomrule
\end{tabular}
\end{table*}

\FloatBarrier
The ablations show that the main gains are associated with closed-loop
feedback and the objective-drive variable \(\beta_k\). Some ablated losses
or features improve individual metrics on individual benchmarks, but the
full feature set and full terminal loss are retained because they provide a
balanced problem-independent configuration across cone families.

\subsection{Additional DFL Results}
\label{app:additional_dfl_results}

The DFL benchmark uses one fixed mixed-cone structure with
\(n_+=8\), three SOC blocks of dimension five, two rotated-SOC blocks of
dimension five, one \(4\times4\) PSD block, and \(m=18\) equalities. Hence
\(n=49\), and the equality matrix has target condition number 80. Each
instance begins with a feature vector \(q\in\mathbb R^{20}\). A fixed random
nonlinear map with a tanh hidden layer of width 64 produces a complementary
cone pair \((x^\star,s^\star)\) and an equality multiplier. The instance is
formed as \(b=Ax^\star\) and \(c=A^\top\lambda^\star+s^\star\), followed by
positive per-instance normalization of \(c\). This construction makes
\(x^\star\) an exact downstream reference while preserving a nonlinear
feature-to-cost prediction problem.

The cost predictor is a two-hidden-layer ReLU MLP of width 128 with a
49-dimensional output. Predicted costs are smoothly clipped by
\(20\tanh(\hat c/20)\). Predictors are trained for 50 epochs with AdamW,
learning rate \(10^{-3}\), weight decay \(10^{-5}\), and gradient clipping at
5. The RCDRS controller is optionally pretrained for 80 epochs at the exact
evaluation depth. For DFL training, the decision loss is
\[
    \mathcal L_{\rm DFL}
    =
    \mathbb E\!\left[
      \left(\frac{c^\top z^K-c^\top x^\star}
      {1+|c^\top x^\star|}\right)_+
      +20\frac{\|Az^K-b\|_2}{1+\|b\|_2}
    \right],
\]
with no auxiliary prediction-MSE term. Suffix \textsc{-F} freezes the
pretrained controller while allowing gradients to pass through the solver to
the predictor; suffix \textsc{-J} jointly fine-tunes predictor and controller.
\textsc{PTO} trains the same predictor by cost MSE and then solves with the
validation-tuned fixed DRS core.

For each depth, the fixed core is selected from
\(\alpha\in\{0.8,1.0,1.3,1.6,1.8\}\) and
\(\beta\in\{0.1,0.3,1.0,3.0,6.0\}\) by validation positive regret with
\(0.1\) times the equality residual as tie-breaking regularization. The DFL
RCDRS ranges are \(\alpha\in[0.2,1.8]\),
\(\beta\in[10^{-3},10]\), and \(\rho\in[10^{-3},10^3]\); the envelope uses
\((\delta_0,k_0,p,s_\alpha)=(2,80,1.2,0.25)\) and growth factors
\((\chi_\rho,\chi_\beta)=(10,10)\).

The main text reports \(K=20\). Table~\ref{tab:app_dfl_k5_k10_k15}
reports the omitted depths. Regret and distance are evaluated under the true
cost, while Eq. is the affine residual for the predicted optimization
instance.

\begin{table*}[ht]
\centering
\scriptsize
\setlength{\tabcolsep}{3.0pt}
\caption{Additional DFL results at omitted depths.
Regret and Dist. evaluate the decision under the true cost. Eq. is the
terminal affine residual of the predicted optimization problem. Lower is
better.}
\label{tab:app_dfl_k5_k10_k15}
\resizebox{\textwidth}{!}{
\begin{tabular}{c l c c c c}
\toprule
\(K\) & Method & \(\mathrm{Regret}\downarrow\)
& \(\mathrm{Eq.}\downarrow\) & \(\mathrm{Dist.}\downarrow\)
& \(\mathrm{Time~(ms)}\downarrow\) \\
\midrule

5
& \textsc{PTO}
& \(0.1871\pm0.0060\)
& \(5.49\mathrm{e}{-2}\pm1.52\mathrm{e}{-3}\)
& \(0.4102\pm0.0145\)
& \(0.0246\pm0.0051\) \\
& \textsc{F-DRS}
& \(0.1286\pm0.0132\)
& \(2.64\mathrm{e}{-2}\pm7.99\mathrm{e}{-4}\)
& \(0.3400\pm0.0288\)
& \(0.0194\pm0.0077\) \\
& \textsc{Grid-DRS}
& \(0.1037\pm0.0098\)
& \(2.59\mathrm{e}{-2}\pm2.70\mathrm{e}{-4}\)
& \(0.3038\pm0.0196\)
& \(0.0238\pm0.0088\) \\
& \textsc{AA-DRS}
& \(0.2114\pm0.1858\)
& \(4.10\mathrm{e}{-2}\pm2.13\mathrm{e}{-2}\)
& \(0.4082\pm0.1583\)
& \(0.0444\pm0.0079\) \\
& \textsc{L-PDHG}
& \(0.2475\pm0.0979\)
& \(4.52\mathrm{e}{-2}\pm1.37\mathrm{e}{-3}\)
& \(0.5547\pm0.1018\)
& \(0.0225\pm0.0076\) \\
& \textsc{RCDRS-Env-F}
& \(0.1048\pm0.0130\)
& \(2.43\mathrm{e}{-2}\pm1.63\mathrm{e}{-3}\)
& \(0.3034\pm0.0320\)
& \(0.0297\pm0.0031\) \\
& \textsc{RCDRS-NoEnv-F}
& \(0.1027\pm0.0046\)
& \(1.83\mathrm{e}{-2}\pm2.04\mathrm{e}{-3}\)
& \(0.3016\pm0.0316\)
& \(0.0325\pm0.0041\) \\
& \textsc{RCDRS-Env-J}
& \(\mathbf{0.1007\pm0.0165}\)
& \(1.87\mathrm{e}{-2}\pm5.88\mathrm{e}{-4}\)
& \(\mathbf{0.2933\pm0.0429}\)
& \(0.0332\pm0.0044\) \\
& \textsc{RCDRS-NoEnv-J}
& \(0.1016\pm0.0073\)
& \(\mathbf{1.67\mathrm{e}{-2}\pm9.86\mathrm{e}{-4}}\)
& \(0.2984\pm0.0309\)
& \(0.0343\pm0.0053\) \\

\midrule

10
& \textsc{PTO}
& \(0.0971\pm0.0032\)
& \(1.63\mathrm{e}{-2}\pm3.9\mathrm{e}{-4}\)
& \(0.4401\pm0.0190\)
& \(0.0444\pm0.0068\) \\
& \textsc{F-DRS}
& \(0.0574\pm0.0039\)
& \(7.74\mathrm{e}{-3}\pm3.5\mathrm{e}{-4}\)
& \(0.2534\pm0.0281\)
& \(0.0436\pm0.0037\) \\
& \textsc{Grid-DRS}
& \(0.0630\pm0.0101\)
& \(7.94\mathrm{e}{-3}\pm2.4\mathrm{e}{-4}\)
& \(0.2656\pm0.0361\)
& \(0.0446\pm0.0058\) \\
& \textsc{AA-DRS}
& \(0.2299\pm0.1340\)
& \(2.97\mathrm{e}{-2}\pm1.31\mathrm{e}{-2}\)
& \(0.4601\pm0.1674\)
& \(0.0848\pm0.0024\) \\
& \textsc{L-PDHG}
& \(0.1298\pm0.0077\)
& \(3.80\mathrm{e}{-2}\pm1.03\mathrm{e}{-3}\)
& \(0.4924\pm0.0741\)
& \(0.0355\pm0.0057\) \\
& \textsc{RCDRS-Env-F}
& \(0.0571\pm0.0046\)
& \(7.83\mathrm{e}{-3}\pm4.9\mathrm{e}{-4}\)
& \(0.2518\pm0.0220\)
& \(0.0617\pm0.0152\) \\
& \textsc{RCDRS-Env-J}
& \(0.0545\pm0.0067\)
& \(6.73\mathrm{e}{-3}\pm1.1\mathrm{e}{-4}\)
& \(0.2385\pm0.0210\)
& \(0.0628\pm0.0068\) \\
& \textsc{RCDRS-NoEnv-F}
& \(0.0564\pm0.0051\)
& \(7.38\mathrm{e}{-3}\pm3.1\mathrm{e}{-4}\)
& \(0.2497\pm0.0211\)
& \(0.0617\pm0.0091\) \\
& \textsc{RCDRS-NoEnv-J}
& \(\mathbf{0.0527\pm0.0065}\)
& \(\mathbf{6.65\mathrm{e}{-3}\pm1.9\mathrm{e}{-4}}\)
& \(\mathbf{0.2337\pm0.0151}\)
& \(0.0652\pm0.0141\) \\

\midrule

15
& \textsc{PTO}
& \(0.1048\pm0.0036\)
& \(6.67\mathrm{e}{-3}\pm5.7\mathrm{e}{-4}\)
& \(0.4701\pm0.0203\)
& \(0.0548\pm0.0143\) \\
& \textsc{F-DRS}
& \(0.0529\pm0.0070\)
& \(3.51\mathrm{e}{-3}\pm4.6\mathrm{e}{-4}\)
& \(0.2641\pm0.0285\)
& \(0.0492\pm0.0087\) \\
& \textsc{Grid-DRS}
& \(0.0495\pm0.0055\)
& \(3.45\mathrm{e}{-3}\pm5.1\mathrm{e}{-4}\)
& \(0.2541\pm0.0265\)
& \(0.0645\pm0.0170\) \\
& \textsc{AA-DRS}
& \(0.3313\pm0.0713\)
& \(3.14\mathrm{e}{-2}\pm5.85\mathrm{e}{-3}\)
& \(0.5368\pm0.0473\)
& \(0.1273\pm0.0347\) \\
& \textsc{L-PDHG}
& \(0.1219\pm0.0192\)
& \(3.15\mathrm{e}{-2}\pm3.30\mathrm{e}{-3}\)
& \(0.4563\pm0.0495\)
& \(0.0602\pm0.0174\) \\
& \textsc{RCDRS-Env-F}
& \(0.0528\pm0.0115\)
& \(3.33\mathrm{e}{-3}\pm3.6\mathrm{e}{-4}\)
& \(0.2632\pm0.0489\)
& \(0.1055\pm0.0328\) \\
& \textsc{RCDRS-Env-J}
& \(0.0447\pm0.0034\)
& \(\mathbf{2.65\mathrm{e}{-3}\pm6.7\mathrm{e}{-5}}\)
& \(0.2305\pm0.0194\)
& \(0.1073\pm0.0435\) \\
& \textsc{RCDRS-NoEnv-F}
& \(0.0457\pm0.0036\)
& \(3.20\mathrm{e}{-3}\pm2.4\mathrm{e}{-4}\)
& \(0.2441\pm0.0249\)
& \(0.0961\pm0.0161\) \\
& \textsc{RCDRS-NoEnv-J}
& \(\mathbf{0.0436\pm0.0014}\)
& \(2.84\mathrm{e}{-3}\pm5.2\mathrm{e}{-5}\)
& \(\mathbf{0.2248\pm0.0159}\)
& \(0.0962\pm0.0173\) \\
\bottomrule
\end{tabular}}
\end{table*}

\subsection{Forecast-Aware Energy Dispatch}
\label{app:dispatch_details}

\paragraph{System configuration and notation.}
We consider a forecast-aware economic dispatch problem over a 24-hour
horizon. Let
\[
    \mathcal T:=\{0,\ldots,T-1\},
    ~~
    T=24,
\]
denote the set of dispatch periods. The power network consists of the bus set
\[
    \mathcal N:=\{0,\ldots,5\}
\]
and the directed transmission-line set
\[
    \mathcal E
    =
    \{(0,1),(1,2),(2,3),(3,4),(4,5),(0,3),(2,5)\}.
\]
For each line \(\ell\in\mathcal E\), \(o(\ell)\) and \(d(\ell)\) denote
its origin and destination buses, respectively, and \(b_\ell>0\) denotes its
DC susceptance. The line susceptances are
\[
    (b_\ell)_{\ell\in\mathcal E}
    =
    (5.0,4.5,4.2,4.0,4.3,3.6,3.8).
\]

The set of conventional generators is denoted by \(\mathcal G\), the set of
battery units by \(\mathcal S\), and the set of renewable sites by
\(\mathcal R\). Generators are located at buses 0 and 3, batteries at buses 2
and 4, and renewable sites at buses 1, 3, and 5. For each bus
\(i\in\mathcal N\), the subsets
\(\mathcal G_i\subseteq\mathcal G\),
\(\mathcal S_i\subseteq\mathcal S\), and
\(\mathcal R_i\subseteq\mathcal R\)
contain the corresponding devices connected to bus \(i\).

For each period \(t\in\mathcal T\), the optimization variables are
\[
\begin{array}{ll}
p^g_{g,t}
& \text{power generated by generator }g,\\
p^{\rm ch}_{s,t}
& \text{charging power of battery }s,\\
p^{\rm dis}_{s,t}
& \text{discharging power of battery }s,\\
e_{s,t}
& \text{stored energy of battery }s,\\
p^{\rm curt}_{r,t}
& \text{planned curtailment at renewable site }r,\\
p^{\rm shed}_{i,t}
& \text{planned load shedding at bus }i,\\
\theta_{i,t}
& \text{voltage angle at bus }i,\\
f_{\ell,t}
& \text{active power flow on line }\ell,\\
s^{\rm res}_t
& \text{forecast reserve-shortfall slack}.
\end{array}
\]
All variables are stacked into a decision vector \(z\in\mathbb R^n\). The
reported six-bus instance has \(n=746\) decision variables and \(m=412\)
affine equality constraints.

The realized load and available renewable power are denoted by
\(\widetilde L_{i,t}\) and \(\widetilde R_{r,t}\), respectively. Their
forecasts, which are supplied to the optimization layer, are denoted by
\(\widehat L_{i,t}\) and \(\widehat R_{r,t}\). The forecast and realized
reserve requirements are denoted by \(\widehat\Gamma_t\) and
\(\widetilde\Gamma_t\), respectively. The prescribed initial stored energy of
battery \(s\) is denoted by \(e_s^0\).

\paragraph{Forecast dispatch problem.}
The optimization layer solves the box-constrained convex quadratic program
\begin{equation}
\label{eq:dispatch_qp}
\begin{aligned}
\min_{z}~~
    & C_{\rm disp}(z)\\
\mathrm{s.t.}~~
    & A_{\rm eq}z
      =
      d_{\rm eq}
      \bigl(
      \widehat L,
      \widehat R,
      \widehat\Gamma,
      e^0
      \bigr),\\
    & z\in
      \mathcal Z
      \bigl(
      \widehat L,
      \widehat R,
      \widehat\Gamma
      \bigr),
\end{aligned}
\end{equation}
where \(A_{\rm eq}\in\mathbb R^{m\times n}\) collects the nodal-balance,
DC-flow, reference-angle, battery-dynamics, initial-energy, terminal-energy,
and reserve-accounting equations. The set
\(\mathcal Z(\widehat L,\widehat R,\widehat\Gamma)\) contains all
componentwise device, network, energy, and slack bounds.

Let \(A^{\rm net}_{i\ell}\) be the signed bus--line incidence coefficient,
defined as
\[
A^{\rm net}_{i\ell}
=
\begin{cases}
 1,  & i=o(\ell),\\
-1,  & i=d(\ell),\\
 0,  & \text{otherwise}.
\end{cases}
\]
For every \(i\in\mathcal N\) and \(t\in\mathcal T\), the forecast nodal
power-balance equation is
\begin{equation}
\label{eq:dispatch_balance}
\begin{aligned}
\sum_{g\in\mathcal G_i}p^g_{g,t}
&+
\sum_{s\in\mathcal S_i}
\left(
p^{\rm dis}_{s,t}
-
p^{\rm ch}_{s,t}
\right)
+
\sum_{r\in\mathcal R_i}
\left(
\widehat R_{r,t}
-
p^{\rm curt}_{r,t}
\right)
+
p^{\rm shed}_{i,t}
-
\widehat L_{i,t}
=
\sum_{\ell\in\mathcal E}
A^{\rm net}_{i\ell}f_{\ell,t}.
\end{aligned}
\end{equation}
Here,
\(\widehat R_{r,t}-p^{\rm curt}_{r,t}\) is the scheduled renewable injection,
whereas \(p^{\rm shed}_{i,t}\) is the load shedding explicitly scheduled by
the forecast dispatch.

The DC power-flow equations are
\begin{equation}
\label{eq:dispatch_dcflow}
f_{\ell,t}
=
b_\ell
\left(
\theta_{o(\ell),t}
-
\theta_{d(\ell),t}
\right),
~~
\ell\in\mathcal E,
\end{equation}
with bus 0 used as the reference bus:
\begin{equation}
\label{eq:dispatch_reference_angle}
\theta_{0,t}=0,
~~
t\in\mathcal T.
\end{equation}

The battery dynamics are
\begin{equation}
\label{eq:dispatch_storage}
e_{s,t+1}
=
e_{s,t}
+
\eta^{\rm ch}_s p^{\rm ch}_{s,t}
-
\frac{p^{\rm dis}_{s,t}}{\eta^{\rm dis}_s},
\end{equation}
where
\[
    \eta^{\rm ch}_s
    =
    \eta^{\rm dis}_s
    =
    0.95.
\]
Both the initial and terminal stored energies are fixed:
\begin{equation}
\label{eq:dispatch_terminal_energy}
e_{s,0}=e_s^0,
~~
e_{s,T}=e_s^0,
~~
s\in\mathcal S.
\end{equation}
The terminal equality prevents the solver from artificially depleting the
batteries at the end of the dispatch horizon.

The available upward headroom at time \(t\) is
\begin{equation}
\label{eq:dispatch_headroom}
h_t(z)
=
\sum_{g\in\mathcal G}
\left(
\overline p^g_g-p^g_{g,t}
\right)
+
\sum_{s\in\mathcal S}
\left(
\overline p^{\rm dis}_s-p^{\rm dis}_{s,t}
\right).
\end{equation}
The implementation uses the reserve-accounting equality
\begin{equation}
\label{eq:dispatch_reserve}
h_t(z)+s^{\rm res}_t
=
\widehat\Gamma_t,
~~
s^{\rm res}_t\ge0.
\end{equation}
Consequently, on an equality-feasible iterate,
\(s^{\rm res}_t=\widehat\Gamma_t-h_t(z)\) represents the portion of the
forecast reserve target that is not supplied by generator and battery
headroom.

The instance-dependent box set imposes
\begin{equation}
\label{eq:dispatch_box}
\begin{aligned}
0
&\le p^g_{g,t}\le\overline p^g_g,\\
0
&\le p^{\rm ch}_{s,t}\le\overline p^{\rm ch}_s,\\
0
&\le p^{\rm dis}_{s,t}\le\overline p^{\rm dis}_s,\\
\underline e_s
&\le e_{s,t}\le\overline e_s,\\
0
&\le p^{\rm curt}_{r,t}\le\widehat R_{r,t},\\
0
&\le p^{\rm shed}_{i,t}\le\widehat L_{i,t},\\
-\overline\theta_i
&\le\theta_{i,t}\le\overline\theta_i,\\
-\overline f_\ell
&\le f_{\ell,t}\le\overline f_\ell,\\
0
&\le s^{\rm res}_t\le\widehat\Gamma_t+1.
\end{aligned}
\end{equation}
All inequalities are componentwise bounds. Projection onto
\(\mathcal Z(\widehat L,\widehat R,\widehat\Gamma)\) is therefore implemented
by elementwise clipping.

\paragraph{Dispatch objective.}
The operating cost is
\begin{equation}
\label{eq:dispatch_cost}
\begin{aligned}
C_{\rm disp}(z)
&=
\sum_{t\in\mathcal T}
\sum_{g\in\mathcal G}
\left(
a_g(p^g_{g,t})^2
+
b_gp^g_{g,t}
\right)+
\lambda_{\rm bat}
\sum_{t\in\mathcal T}
\sum_{s\in\mathcal S}
\left(
p^{\rm ch}_{s,t}
+
p^{\rm dis}_{s,t}
\right)+
\lambda_{\rm curt}
\sum_{t\in\mathcal T}
\sum_{r\in\mathcal R}
p^{\rm curt}_{r,t}\\
&~~+
\lambda_{\rm shed}
\sum_{t\in\mathcal T}
\sum_{i\in\mathcal N}
p^{\rm shed}_{i,t}+
\lambda_{\rm res}
\sum_{t\in\mathcal T}
s^{\rm res}_t+
\lambda_{\rm flow}
\sum_{t\in\mathcal T}
\sum_{\ell\in\mathcal E}
f_{\ell,t}^2,
\end{aligned}
\end{equation}
where
\[
\lambda_{\rm bat}=0.015,
~~
\lambda_{\rm curt}=0.08,
~~
\lambda_{\rm shed}=14,
~~
\lambda_{\rm res}=10,
~~
\lambda_{\rm flow}=0.002.
\]
The relatively large shedding and reserve-shortfall coefficients encourage
reliable operation, while the battery-throughput and line-flow terms
regularize excessive cycling and network loading.

Equivalently, the objective can be expressed as
\begin{equation}
\label{eq:dispatch_quadratic_form}
C_{\rm disp}(z)
=
\frac{1}{2}z^\top Qz+c^\top z+c_0,
~~
Q\succeq0,
\end{equation}
where \(Q\) is the positive-semidefinite quadratic-cost matrix and \(c\) is
the vector of linear operating-cost coefficients.

\begin{table*}[!t]
\centering
\scriptsize
\setlength{\tabcolsep}{3.2pt}
\renewcommand{\arraystretch}{1.04}
\caption{Configuration of the forecast-aware six-bus dispatch benchmark.
Power and energy quantities are expressed in the normalized units of the
synthetic microgrid.}
\label{tab:app_dispatch_parameters}
\resizebox{\textwidth}{!}{%
\begin{tabular}{lll}
\toprule
Group & Setting & Value \\
\midrule
Network
& buses / lines / horizon
& \(6/7/24\) \\

Generator limits
& \(\overline p^g_g\)
& \((1.85,1.45)\) \\

Battery power
& \(\overline p^{\rm ch}_s,\overline p^{\rm dis}_s\)
& \((0.42,0.36),(0.42,0.36)\) \\

Battery energy
& \(\underline e_s,\overline e_s\)
& \((0.15,0.12),(1.35,1.15)\) \\

Battery efficiency
& \(\eta^{\rm ch}_s,\eta^{\rm dis}_s\)
& \(0.95,0.95\) \\

Voltage-angle bound
& \(\overline\theta_i\)
& \(0.60\) \\

Base line-flow limits
& \((\overline f_\ell^{\rm base})_{\ell\in\mathcal E}\)
& \((0.95,0.85,0.80,0.75,0.75,0.70,0.65)\) \\

Stress line-limit scale
& multiplier applied to base limits
& \(1.15\) \\

Generation cost
& \(a_g,b_g\)
& \((0.08,0.10),(1.00,1.10)\) \\

Base load
& \((L_i^{\rm base})_{i\in\mathcal N}\)
& \((0.36,0.43,0.39,0.46,0.36,0.32)\) \\

Load profiles
& variation amplitude / profile noise
& \(0.20/0.025\) \\

Renewable profiles
& site amplitudes
& \((0.72,0.58,0.62)\) \\

Forecast noise
& load / renewable multiplicative scale
& \(0.045/0.05625\) \\

Reserve requirement
& reserve factor
& \(0.12\) \\

Dataset
& train / validation / test
& \(1536/384/512\) \\

Evaluation
& seeds / depths / reference depth
& \(\{0,1,2\}/\{5,10,15\}/300\) \\
\bottomrule
\end{tabular}}
\end{table*}

\paragraph{Gradient-driven RCDRS extension.}
The dispatch problem differs from the linear conic programs covered by the
operator analysis because its objective contains quadratic generation and
line-flow costs. We retain the affine projection, relaxation, box projection,
and scaled-dual structure of RCDRS, but replace the fixed linear objective
direction by the normalized gradient of \(C_{\rm disp}\).

At layer \(k\), define
\begin{equation}
\label{eq:dispatch_normalized_gradient}
g^k
:=
\frac{\nabla C_{\rm disp}(z^k)}
{\|\nabla C_{\rm disp}(z^k)\|_2+\varepsilon}
=
\frac{Qz^k+c}
{\|Qz^k+c\|_2+\varepsilon},
\end{equation}
where \(\varepsilon>0\) prevents division by zero. The affine-projection input
is then
\begin{equation}
\label{eq:dispatch_rcdrs_shift}
w^k
=
z^k-u^k-\beta_k g^k,
\end{equation}
where \(u^k\) is the scaled-dual state and \(\beta_k>0\) is the
controller-selected gradient-displacement magnitude. The remaining numerical
transition is
\begin{equation}
\label{eq:dispatch_rcdrs_transition}
\begin{aligned}
x^{k+1}
&=
\Proj_{\{z:A_{\rm eq}z=d_{\rm eq}\}}(w^k),\\
\overline x^{k+1}
&=
\alpha_kx^{k+1}
+
(1-\alpha_k)z^k,\\
z^{k+1}
&=
\Pi_{\mathcal Z}
\left(
\overline x^{k+1}+u^k
\right),\\
u^{k+1}
&=
u^k+\overline x^{k+1}-z^{k+1}.
\end{aligned}
\end{equation}

Only \(\alpha_k\) and \(\beta_k\) enter the numerical projection-splitting
transition. The auxiliary positive coordinate \(\rho_k\) is retained as
controller-side scale memory and affects future controls only through the
recurrent feature history. It is not an ADMM penalty and does not enter the
affine-projection input.

This gradient-driven transition is used only in the dispatch experiment. The
operator-theoretic guarantees in Section~\ref{app:theory_proofs} apply to the
linear conic transition in Eq.~\eqref{eq:rc_drs_block} and are not claimed for
this quadratic engineering extension.

\paragraph{Forecast and realized operating conditions.}
The stochastic scenario generator first produces realized load and renewable
profiles \((\widetilde L,\widetilde R)\). Forecast profiles are then formed by
multiplicative Gaussian perturbations and nonnegative clipping:
\begin{equation}
\label{eq:dispatch_forecast_profiles}
\begin{aligned}
\widehat L_{i,t}
&=
\max
\left\{
0.05,\,
\widetilde L_{i,t}
\left(
1+\xi^L_{i,t}
\right)
\right\},\\
\widehat R_{r,t}
&=
\max
\left\{
0,\,
\widetilde R_{r,t}
\left(
1+\xi^R_{r,t}
\right)
\right\}.
\end{aligned}
\end{equation}
The load perturbation has standard-deviation scale \(0.045\), whereas the
renewable perturbation uses scale
\(1.25\times0.045=0.05625\).

The forecast and realized reserve requirements are
\begin{equation}
\label{eq:dispatch_reserve_generation}
\begin{aligned}
\widehat\Gamma_t
&=
0.12
\left(
\sum_{i\in\mathcal N}\widehat L_{i,t}
+
0.5
\sum_{r\in\mathcal R}\widehat R_{r,t}
\right),\\
\widetilde\Gamma_t
&=
0.12
\left(
\sum_{i\in\mathcal N}\widetilde L_{i,t}
+
0.5
\sum_{r\in\mathcal R}\widetilde R_{r,t}
\right).
\end{aligned}
\end{equation}

For a forecast dispatch \(z\), define the realized nodal deficit by
\begin{equation}
\label{eq:dispatch_realized_imbalance}
\begin{aligned}
\Delta_{i,t}(z)
&=
\widetilde L_{i,t}
+
\sum_{\ell\in\mathcal E}
A^{\rm net}_{i\ell}f_{\ell,t}-
\sum_{g\in\mathcal G_i}p^g_{g,t}
-
\sum_{s\in\mathcal S_i}
\left(
p^{\rm dis}_{s,t}
-
p^{\rm ch}_{s,t}
\right)-
\sum_{r\in\mathcal R_i}
\left(
\widetilde R_{r,t}
-
p^{\rm curt}_{r,t}
\right)
-
p^{\rm shed}_{i,t}.
\end{aligned}
\end{equation}
A positive \(\Delta_{i,t}\) represents an emergency supply deficit and a
negative value represents an emergency surplus. We therefore define
\begin{equation}
\label{eq:dispatch_emergency_terms}
d^{\rm emg}_{i,t}
:=
[\Delta_{i,t}]_+,
~~
q^{\rm emg}_{i,t}
:=
[-\Delta_{i,t}]_+,
\end{equation}
where \([a]_+:=\max\{a,0\}\).

Let
\[
D_L
:=
1+\sum_{i\in\mathcal N}\sum_{t\in\mathcal T}
\widetilde L_{i,t},
~~
D_R
:=
1+\sum_{r\in\mathcal R}\sum_{t\in\mathcal T}
\widetilde R_{r,t}.
\]
The realized shedding rate is
\begin{equation}
\label{eq:dispatch_shedding_metric}
\mathrm{Shed}(z)
=
\frac{
\displaystyle
\sum_{i,t}p^{\rm shed}_{i,t}
+
\sum_{i,t}d^{\rm emg}_{i,t}
}{
D_L
},
\end{equation}
and the emergency-deficit rate is
\begin{equation}
\label{eq:dispatch_emergency_metric}
\mathrm{Emerg}(z)
=
\frac{
\displaystyle
\sum_{i,t}d^{\rm emg}_{i,t}
}{
D_L
}.
\end{equation}

The quantity reported as \(\mathrm{Curt}\) is the normalized renewable-spillage
burden
\begin{equation}
\label{eq:dispatch_curtailment_metric}
\mathrm{Curt}(z)
=
\frac{
\displaystyle
\sum_{r,t}p^{\rm curt}_{r,t}
+
\sum_{i,t}q^{\rm emg}_{i,t}
}{
D_R
}.
\end{equation}
This quantity includes both planned renewable curtailment and emergency
surplus caused by realized nodal imbalance. Consequently, it is not
necessarily bounded by one for strongly infeasible early iterates and should
not be interpreted as only the deliberately curtailed fraction of available
renewable production.

The realized reserve-shortfall rate is
\begin{equation}
\label{eq:dispatch_reserve_metric}
\mathrm{Res}(z)
=
\frac{
\displaystyle
\sum_{t\in\mathcal T}
[\widetilde\Gamma_t-h_t(z)]_+
}{
\displaystyle
1+\sum_{t\in\mathcal T}\widetilde\Gamma_t
}.
\end{equation}

\paragraph{Feasibility and operational metrics.}
For compactness, define
\[
    d_{\rm eq}^{\rm f}
    :=
    d_{\rm eq}
    \bigl(
    \widehat L,
    \widehat R,
    \widehat\Gamma,
    e^0
    \bigr).
\]
The normalized forecast equality residual is
\begin{equation}
\label{eq:dispatch_eq_metric}
\mathrm{Eq}(z)
=
\frac{
\|A_{\rm eq}z-d_{\rm eq}^{\rm f}\|_2
}{
1+\|d_{\rm eq}^{\rm f}\|_2
}.
\end{equation}
The normalized box violation is
\begin{equation}
\label{eq:dispatch_box_metric}
\mathrm{Box}(z)
=
\frac{
\|z-\Pi_{\mathcal Z}(z)\|_2
}{
1+\|z\|_2
},
\end{equation}
where \(\Pi_{\mathcal Z}\) denotes Euclidean projection onto the
instance-dependent box set.

A test instance is declared operationally successful when
\begin{equation}
\label{eq:dispatch_pass}
\mathrm{Eq}(z)\le0.035,
~~
\mathrm{Box}(z)\le10^{-5},
~~
\mathrm{Shed}(z)\le0.12,
~~
\mathrm{Res}(z)\le0.08.
\end{equation}
The reported \(\mathrm{Pass}\) value is the fraction of test instances
satisfying all four conditions simultaneously. Since Pass is evaluated jointly
for each instance, it depends on instance-level tails and cross-metric
correlations and cannot be inferred solely from the reported seed-averaged
means. Emergency deficit is already included in \(\mathrm{Shed}\), while
\(\mathrm{Curt}\) is reported as a separate renewable-utilization diagnostic
and is not included in the operational-pass criterion.

\paragraph{Training and model selection.}
Experiments use three random seeds, independently generated training,
validation, and test sets of sizes \(1536/384/512\), and a batch size of 512.
For each prescribed unrolling depth \(K\in\{5,10,15\}\), all fixed and learned
methods are tuned, trained, and evaluated at that same depth.

For every seed and depth, \textsc{Grid-DRS} selects a fixed numerical core
from
\[
\alpha\in\{0.8,1.0,1.3,1.6\},
~~
\beta\in\{0.2,0.5,1.0,2.0,4.0\},
\]
using validation data only. The same validation-selected core initializes and
centers the RCDRS controller at the corresponding depth.

RCDRS uses a single-layer GRU controller with hidden width 64. It is trained
for 60 epochs with AdamW, initial learning rate \(10^{-3}\), weight decay
\(10^{-5}\), gradient clipping at 5, and cosine learning-rate decay to \(5\%\)
of the initial learning rate. The global action ranges are
\[
\alpha_k\in[0.2,1.8],
~~
\beta_k\in[10^{-3},10],
~~
\rho_k\in[10^{-3},10^3].
\]
The enveloped variant uses
\[
(\delta_0,k_0,p,s_\alpha)
=
(2,60,1.2,0.25)
\]
and multiplicative growth limits
\[
(\chi_\rho,\chi_\beta)=(10,10).
\]
The NoEnv variant removes both the decaying action envelope and the
multiplicative growth filter.

Controller training and validation checkpoint selection minimize
\begin{equation}
\label{eq:dispatch_training_merit}
\begin{aligned}
\mathcal M_{\rm disp}(z)
&=
\frac{
C_{\rm disp}(z)
}{
1+\sum_{i,t}\widehat L_{i,t}
}
+
6\,\mathrm{Eq}(z)
+
\mathrm{Box}(z)+
12\,\mathrm{Shed}(z)
+
8\,\mathrm{Res}(z)
+
0.05\,\mathrm{Curt}(z).
\end{aligned}
\end{equation}

\paragraph{Reference dispatch, CostGap, and runtime.}
For each seed, a validation-selected fixed solver is run for
\(K_{\rm ref}=300\) iterations to produce the test-instance reference dispatch
\(z^{\rm ref}\). The reported nonnegative cost gap is
\begin{equation}
\label{eq:dispatch_cost_gap}
\mathrm{CostGap}(z^K)
=
\left[
\frac{
C_{\rm disp}(z^K)
-
C_{\rm disp}(z^{\rm ref})
}{
1+
|C_{\rm disp}(z^{\rm ref})|
}
\right]_+.
\end{equation}
A zero CostGap does not independently certify optimality because a severely
infeasible iterate may obtain an artificially low raw objective. CostGap must
therefore be interpreted together with equality feasibility and realized
operational metrics.

Runtime is measured over the full held-out test batch. CUDA is synchronized
immediately before and after the timed region, and elapsed time is divided by
the number of test instances. The reported unit is milliseconds per instance.
Offline training, validation tuning, reference-solver computation, and
scenario generation are excluded.

\begin{table*}[!t]
\centering
\scriptsize
\setlength{\tabcolsep}{2.1pt}
\renewcommand{\arraystretch}{1.02}
\caption{Additional forecast-aware dispatch results at \(K=5\) and \(K=10\).
Values are mean\(\pm\)standard deviation over three random seeds. Lower is
better except for Pass. Runtime is reported in milliseconds per instance.
The quantity Curt. is the normalized renewable-spillage burden in
Eq.~\eqref{eq:dispatch_curtailment_metric} and can exceed one for strongly
infeasible iterates. CostGap should be interpreted jointly with feasibility
and reliability metrics.}
\label{tab:app_dispatch_k5_k10}
\resizebox{\textwidth}{!}{%
\begin{tabular}{c l c c c c c c c c}
\toprule
\(K\)
& Method
& \(\mathrm{CostGap}\downarrow\)
& \(\mathrm{Eq.}\downarrow\)
& \(\mathrm{Shed}\downarrow\)
& \(\mathrm{Curt.}\downarrow\)
& \(\mathrm{Res.}\downarrow\)
& \(\mathrm{Emerg.}\downarrow\)
& \(\mathrm{Pass}\uparrow\)
& \(\mathrm{Time~(ms)}\downarrow\) \\
\midrule

5
& \textsc{F-DRS}
& \(0.412\pm0.006\)
& \(8.03\mathrm{e}{-2}\pm6.8\mathrm{e}{-5}\)
& \(6.89\mathrm{e}{-2}\pm5.9\mathrm{e}{-4}\)
& \(1.136\pm0.005\)
& \(4.95\mathrm{e}{-1}\pm5.5\mathrm{e}{-4}\)
& \(4.13\mathrm{e}{-2}\pm1.6\mathrm{e}{-4}\)
& \(0.000\pm0.000\)
& \(\mathbf{0.012\pm0.010}\) \\

& \textsc{Grid-DRS}
& \(0.294\pm0.005\)
& \(8.97\mathrm{e}{-2}\pm8.5\mathrm{e}{-5}\)
& \(6.54\mathrm{e}{-2}\pm4.3\mathrm{e}{-4}\)
& \(0.939\pm0.004\)
& \(1.89\mathrm{e}{-1}\pm4.3\mathrm{e}{-4}\)
& \(4.23\mathrm{e}{-2}\pm6.0\mathrm{e}{-5}\)
& \(0.000\pm0.000\)
& \(0.013\pm0.010\) \\

& \textsc{S-ADMM}
& \(0.537\pm0.006\)
& \(8.03\mathrm{e}{-2}\pm5.3\mathrm{e}{-5}\)
& \(7.88\mathrm{e}{-2}\pm5.6\mathrm{e}{-4}\)
& \(1.181\pm0.005\)
& \(5.04\mathrm{e}{-1}\pm6.7\mathrm{e}{-4}\)
& \(4.13\mathrm{e}{-2}\pm8.6\mathrm{e}{-5}\)
& \(0.000\pm0.000\)
& \(0.016\pm0.006\) \\

& \textsc{AA-DRS}
& \(0.279\pm0.005\)
& \(1.03\mathrm{e}{-1}\pm7.9\mathrm{e}{-5}\)
& \(5.46\mathrm{e}{-2}\pm4.9\mathrm{e}{-4}\)
& \(\mathbf{0.369\pm0.002}\)
& \(4.35\mathrm{e}{-1}\pm3.3\mathrm{e}{-4}\)
& \(3.62\mathrm{e}{-2}\pm1.2\mathrm{e}{-4}\)
& \(0.000\pm0.000\)
& \(0.021\pm0.006\) \\

& \textsc{RCDRS-Env}
& \(\mathbf{0.056\pm0.003}\)
& \(\mathbf{5.03\mathrm{e}{-2}\pm3.9\mathrm{e}{-5}}\)
& \(\mathbf{5.00\mathrm{e}{-2}\pm2.8\mathrm{e}{-4}}\)
& \(0.762\pm0.003\)
& \(\mathbf{6.85\mathrm{e}{-5}\pm2.2\mathrm{e}{-5}}\)
& \(\mathbf{3.34\mathrm{e}{-2}\pm2.7\mathrm{e}{-4}}\)
& \(0.000\pm0.000\)
& \(0.043\pm0.017\) \\

& \textsc{RCDRS-NoEnv}
& \(0.081\pm0.002\)
& \(5.42\mathrm{e}{-2}\pm6.4\mathrm{e}{-5}\)
& \(6.71\mathrm{e}{-2}\pm1.7\mathrm{e}{-3}\)
& \(0.754\pm0.006\)
& \(4.69\mathrm{e}{-4}\pm4.9\mathrm{e}{-5}\)
& \(3.73\mathrm{e}{-2}\pm1.5\mathrm{e}{-4}\)
& \(0.000\pm0.000\)
& \(0.031\pm0.006\) \\

\midrule

10
& \textsc{F-DRS}
& \(0.108\pm0.001\)
& \(3.33\mathrm{e}{-2}\pm1.2\mathrm{e}{-4}\)
& \(6.23\mathrm{e}{-2}\pm8.7\mathrm{e}{-4}\)
& \(0.927\pm0.001\)
& \(9.19\mathrm{e}{-2}\pm3.7\mathrm{e}{-4}\)
& \(5.66\mathrm{e}{-2}\pm8.0\mathrm{e}{-4}\)
& \(0.160\pm0.012\)
& \(\mathbf{0.017\pm0.006}\) \\

& \textsc{Grid-DRS}
& \(0.029\pm0.000\)
& \(3.48\mathrm{e}{-2}\pm9.3\mathrm{e}{-4}\)
& \(3.99\mathrm{e}{-2}\pm1.7\mathrm{e}{-4}\)
& \(0.812\pm0.003\)
& \(2.32\mathrm{e}{-2}\pm6.3\mathrm{e}{-4}\)
& \(3.95\mathrm{e}{-2}\pm1.6\mathrm{e}{-4}\)
& \(0.784\pm0.004\)
& \(0.018\pm0.004\) \\

& \textsc{S-ADMM}
& \(0.216\pm0.002\)
& \(3.45\mathrm{e}{-2}\pm1.8\mathrm{e}{-4}\)
& \(7.20\mathrm{e}{-2}\pm9.4\mathrm{e}{-4}\)
& \(0.967\pm0.001\)
& \(1.04\mathrm{e}{-1}\pm5.6\mathrm{e}{-4}\)
& \(5.92\mathrm{e}{-2}\pm8.5\mathrm{e}{-4}\)
& \(0.043\pm0.005\)
& \(0.029\pm0.002\) \\

& \textsc{AA-DRS}
& \(0.026\pm0.000\)
& \(3.50\mathrm{e}{-2}\pm1.1\mathrm{e}{-4}\)
& \(3.99\mathrm{e}{-2}\pm1.6\mathrm{e}{-4}\)
& \(0.813\pm0.003\)
& \(2.32\mathrm{e}{-2}\pm6.1\mathrm{e}{-4}\)
& \(3.95\mathrm{e}{-2}\pm1.4\mathrm{e}{-4}\)
& \(0.873\pm0.005\)
& \(0.031\pm0.002\) \\

& \textsc{RCDRS-Env}
& \(\mathbf{0.023\pm0.001}\)
& \(3.15\mathrm{e}{-2}\pm4.7\mathrm{e}{-5}\)
& \(\mathbf{5.97\mathrm{e}{-3}\pm1.6\mathrm{e}{-4}}\)
& \(\mathbf{0.740\pm0.005}\)
& \(\mathbf{3.17\mathrm{e}{-3}\pm2.5\mathrm{e}{-4}}\)
& \(\mathbf{5.96\mathrm{e}{-3}\pm1.6\mathrm{e}{-4}}\)
& \(\mathbf{0.951\pm0.027}\)
& \(0.074\pm0.010\) \\

& \textsc{RCDRS-NoEnv}
& \(0.027\pm0.001\)
& \(\mathbf{3.12\mathrm{e}{-2}\pm1.9\mathrm{e}{-5}}\)
& \(7.43\mathrm{e}{-3}\pm2.2\mathrm{e}{-4}\)
& \(0.756\pm0.001\)
& \(6.30\mathrm{e}{-3}\pm4.4\mathrm{e}{-4}\)
& \(7.42\mathrm{e}{-3}\pm2.2\mathrm{e}{-4}\)
& \(0.902\pm0.013\)
& \(0.078\pm0.013\) \\

\bottomrule
\end{tabular}}
\end{table*}

\paragraph{Results.}
At the very shallow depth \(K=5\), none of the methods satisfies the joint
operational-pass criterion because the forecast equality residual remains
above the threshold of \(0.035\). Nevertheless, \textsc{RCDRS-Env} already
achieves the smallest CostGap, equality residual, realized shedding,
reserve shortfall, and emergency-deficit rate. \textsc{AA-DRS} obtains the
smallest renewable-spillage burden at this depth, illustrating that low
spillage alone does not imply a reliable or equality-feasible dispatch.

At \(K=10\), the classical methods substantially improve forecast
feasibility, but their realized reliability remains less consistent.
\textsc{RCDRS-Env} obtains the lowest CostGap, shedding rate, spillage burden,
reserve shortfall, and emergency-deficit rate, together with the highest
operational pass rate of \(0.951\). \textsc{RCDRS-NoEnv} achieves the smallest
forecast equality residual, while remaining slightly worse than the
enveloped variant on the realized reliability metrics. These results indicate
that the residual-controlled policy primarily improves the operational quality
of the finite-depth dispatch rather than merely reducing a single algebraic
residual.

The improved reliability incurs additional controller overhead. At \(K=10\),
the RCDRS variants require approximately \(0.074\)--\(0.078\) milliseconds per
instance, compared with \(0.017\)--\(0.031\) milliseconds for the classical
splitting baselines. The absolute online cost nevertheless remains below
\(0.1\) milliseconds per instance in the reported batched GPU setting.


\subsection{Runtime Measurement and Reproducibility}
\label{app:runtime_reproducibility}

Runtime is measured in the online inference setting and reported in
milliseconds per instance. The implementation synchronizes CUDA before and
after timed regions when a 4060Ti GPU is used. Synthetic solver timing uses one
warm-up pass followed by three timed repetitions; DFL and dispatch timing use
the full held-out batch and divide elapsed time by the number of instances.
The timed region includes all operations required to produce the terminal
output: feature construction, recurrent-controller evaluation, action
mapping, affine projection, cone or box projection, relaxation, and the
scaled-dual update. Offline controller training, predictor training,
validation grid search, reference-solver calls, and dataset generation are
excluded.

All experiments use PyTorch 2.0 float32. Random seeds control problem generation,
feature-to-cost maps, model initialization, and minibatch order. For a fixed
seed and depth, all methods use identical training, validation, and test splits and the
same initial state. Learned layers are trained and evaluated at the same
terminal depth. The accompanying code archive stores seed-level CSV files and
seed-aggregated summaries.
\end{document}